\newtheorem{thm}{Theorem}[section]
\newtheorem{lem}[thm]{Lemma}
\newtheorem{prop}[thm]{Proposition}
\newtheorem{rem}[thm]{Remark}
\DeclareMathAlphabet{\mathpzc}{OT1}{pzc}{m}{it}
\numberwithin{equation}{section}
\newcommand{\bqn}{\begin{equation}}
\newcommand{\eqn}{\end{equation}}
\newcommand{\bqnn}{\begin{equation*}}
\newcommand{\eqnn}{\end{equation*}}
\newcommand{\R}{\mathbb{R}}
\newcommand{\N}{\mathbb{N}}
\newcommand{\C}{\mathbb{C}}
\newcommand{\ml}{\mathcal{L}}
\newcommand{\Om}{\Omega}
\newcommand{\ve}{\varepsilon}
\newcommand{\e}{\varepsilon}
\newcommand{\rd}{\mathrm{d}}
\newcommand{\ue}{u_\varepsilon}
\newcommand{\Phie}{\Phi_\varepsilon}
\date{\today}
\newcommand{\dhr}{\mathrel{\lhook\joinrel\relbar\kern-.8ex\joinrel\lhook\joinrel\rightarrow}} 
\title[A free boundary problem for MEMS with curvature]
{Dynamics of a free boundary problem with curvature modeling electrostatic MEMS}
\begin{document}
\author{Joachim Escher}
\address{Leibniz Universit\"at Hannover\\ Institut f\" ur Angewandte Mathematik \\ Welfengarten 1 \\ D--30167 Hannover\\ Germany}
\email{escher@ifam.uni-hannover.de}

\author{Philippe Lauren\c{c}ot}
\address{Institut de Math\'ematiques de Toulouse, CNRS UMR~5219, Universit\'e de Toulouse \\ F--31062 Toulouse Cedex 9, France}
\email{laurenco@math.univ-toulouse.fr}

\author{Christoph Walker}
\address{Leibniz Universit\"at Hannover\\ Institut f\" ur Angewandte Mathematik \\ Welfengarten 1 \\ D--30167 Hannover\\ Germany}
\email{walker@ifam.uni-hannover.de}

%%%%%%%%%%%%%%%%%%%%%%%%%%%%%%%%%%%%%%%%%%%%%%%%%%%%%%%%
\begin{abstract}
The dynamics of a free boundary problem for electrostatically actuated microelectromechanical systems (MEMS) is investigated. The model couples the electric potential to the deformation of the membrane, the deflection of the membrane being caused by application of a voltage difference across the device. More precisely, the electrostatic potential is a harmonic function in the angular domain that is partly bounded by the deformable membrane. The gradient trace of the electric potential on this free boundary part acts as a source term in the governing equation for the membrane deformation. The main feature of the model considered herein is that, unlike most previous research, the small deformation assumption is dropped, and curvature for the deformation of the membrane is taken into account what leads to a quasilinear parabolic equation. The free boundary problem is shown to be well-posed, locally in time for arbitrary voltage values and globally in time for small voltages values. Furthermore, existence of asymptotically stable steady-state configurations is proved in case of small voltage values as well as non-existence of steady-states if the applied voltage difference is large. Finally, convergence of  solutions of the free boundary problem to the solutions of the well-established small aspect ratio model is shown.
\end{abstract}
%%%%%%%%%%%%%%%%%%%%%%%%%%%%%%%%%%%%%%%%%%%%%%%%%%%%%%%%

%%%%%%%%%%%%%%%%%%%%%%%%%%%%%%%%%%%%%%%%%%%%%%%%%%%%%%%%
\keywords{MEMS, free boundary problem, curvature, well-posedness, asymptotic stability, small aspect ratio limit}
\subjclass[2010]{35R35, 35M33, 35Q74, 35B25, 74M05}
%%%%%%%%%%%%%%%%%%%%%%%%%%%%%%%%%%%%%%%%%%%%%%%%%%%%%%%%

\maketitle

%%%%%%%%%%%%%%%%%%%%%%%%%%%%%%%%%%%%%%%%%%%%%%%%%%%%%%%%
%%%%%%%%%%%%%%%%%%%%%%%%%%%%%%%%%%%%%%%%%%%%%%%%%%%%%%%%
\section{Introduction}
%%%%%%%%%%%%%%%%%%%%%%%%%%%%%%%%%%%%%%%%%%%%%%%%%%%%%%%%
%%%%%%%%%%%%%%%%%%%%%%%%%%%%%%%%%%%%%%%%%%%%%%%%%%%%%%%%

The focus of this paper is the analysis of a model describing the dynamics of an electrostatically actuated microelectromechanical system (MEMS) when the deformation of the devices are not assumed to be small.
 More precisely, consider an elastic plate held at  potential $V$ and suspended above a fixed ground plate held at zero potential. The potential difference between the two plates generates a Coulomb force and causes a deformation of the membrane, thereby converting electrostatic energy into mechanical energy, a feature used in the design of several MEMS-based devices such as micropumps or microswitches \cite{BernsteinPelesko}. An ubiquitous phenomenon observed in such devices is the so called ``pull-in'' instability: a threshold value of the applied voltage $V$ above which the elastic response of the membrane cannot balance the Coulomb force and the deformable membrane smashes into the fixed plate. Since this effect might either be useful or, in contrast, could damage the device, its understanding is of utmost practical importance and several mathematical models have been set up for its investigation \cite{EspositoGhoussoubGuo, LeusElata08, BernsteinPelesko}.

In the following subsection we give a brief description of an idealized device as depicted in Figure~\ref{fig1}, where the state of the device is characterized by the electrostatic potential in the region between the two plates and the deformation of the membrane which is not assumed to be small from the outset, cf. \cite{BrubakerPelesko_EJAM}.

%%%%%%%%%%%%%%%%%%%%%%%%%%%%%%%%%%%%%%%%%%%%%%%%%%%%%%%%
%%%%%%%%%%%%%%%%%%%%%%%%%%%%%%%%%%%%%%%%%%%%%%%%%%%%%%%%
\subsection{The Model}
%%%%%%%%%%%%%%%%%%%%%%%%%%%%%%%%%%%%%%%%%%%%%%%%%%%%%%%%
%%%%%%%%%%%%%%%%%%%%%%%%%%%%%%%%%%%%%%%%%%%%%%%%%%%%%%%%
To derive the model for electrostatic MEMS with curvature we proceed similarly to \cite{BrubakerPelesko_EJAM,EspositoGhoussoubGuo,LinYang}.
We consider a rectangular thin elastic membrane that is suspended above a rigid plate. The $(\hat{x},\hat{y},\hat{z})$-coordinate system is chosen such that the ground plate of dimension $[-L,L]\times [0,l]$ in $(\hat{x},\hat{y})$-direction is located at $\hat{z}=-H$, while the undeflected membrane with the same dimension $[-L,L]\times [0,l]$ in $(\hat{x},\hat{y})$-direction is located at $\hat{z}=0$. The membrane is held fixed along the edges in $\hat{y}$-direction while the edges in $\hat{x}$-direction are free. Assuming homogeneity in $\hat{y}$-direction, the membrane may thus be considered as an elastic strip and the $\hat{y}$-direction is omitted in the sequel. The mechanical deflection of the membrane is caused by a voltage difference that is applied across the device. The membrane is held at potential $V$ while the rigid plate is grounded. 
We denote the deflection of the membrane at position $\hat{x} $ and time $\hat{t}$ by $\hat{u}=\hat{u}(\hat{t},\hat{x}) >-H$ and the electrostatic potential at position $(\hat{x},\hat{z}) $ and time $\hat{t}$ by $\hat{\psi}=\hat{\psi}(\hat{t},\hat{x} ,\hat{z})$. We do not indicate the time variable $\hat{t}$  for  the time being.
The electrostatic potential $\hat{\psi}$ is harmonic, i.e.
\bqn\label{psihat}
\Delta\hat{\psi}=0 \quad\text{in}\quad \hat{\Omega}(\hat{u})
\eqn
and satisfies the boundary conditions
\bqn\label{psihatbc}
\hat{\psi}(\hat{x},-H)=0\ ,\quad \hat{\psi}(\hat{x},\hat{u}(\hat{x}))= V\ ,
 \qquad \hat{x}\in(-L,L)\ ,
\eqn
where
$$
\hat{\Omega}(\hat{u}):=\left\{(\hat{x},\hat{z})\,;\, -L<\hat{x}<L\,,\, -H<\hat{z}<\hat{u}(\hat{x})\right\}
$$
is the region between the ground plate and the membrane. The total energy of the system constitutes of the electric potential and the elastic energy and reads $$E(\hat{u})=E_e(\hat{u})+E_s(\hat{u})\ . $$ The {\it electrostatic energy} $E_e$ in dependence of the deflection $\hat{u}$  is given by
\bqnn 
E_e(\hat{u})=-\frac{\epsilon_0}{2}\int_{-L}^L\int_{-H}^{\hat{u}(\hat{x})}\vert\nabla\hat{\psi}(\hat{x},\hat{z})\vert^2\,\,\rd\hat{z}\,\rd \hat{x}
\eqnn 
with $\epsilon_0$ being the permittivity of free space while the {\it elastic energy} $E_s$ only retains the contribution due to stretching (in particular, bending is neglected) and is proportional
 to the tension $T$ and to the change of surface area of the membrane, i.e.
\bqnn 
E_s(\hat{u})=T\int_{-L}^L  \left(\sqrt{1+(\partial_{\hat{x}}\hat{u}(\hat{x}))^2}-1\right)\, \rd \hat{x}\ .
\eqnn
Introducing the dimensionless variables
$$
x=\frac{\hat{x}}{L}\ ,\quad z=\frac{\hat{z}}{H}\ ,\quad u=\frac{\hat{u}}{H}\ ,\quad \psi = \frac{\hat{\psi}}{V}
$$
and denoting  the aspect ratio of the device by $\ve=H/L$, we may write the total energy in these variables in the form
\bqn\label{E}
\begin{split}
E(u)&= TL\int_{-1}^1  \left(\sqrt{1+\ve^2(\partial_{x}u(x))^2}-1\right)\, \rd x\\
&\qquad -\frac{\epsilon_0V^2}{2\ve}\int_{\Omega(u)}\left(\ve^2\vert\partial_{x}\psi(x,z)\vert^2+\vert\partial_z\psi(x,z)\vert^2\right)\,\rd (x,z)\ ,
\end{split}
\eqn 
with
$$
\Omega(u) := \left\{ (x,z)\in (-1,1)\times (-1,\infty)\ :\ -1 < z < u(x) \right\}\ ,
$$
so that, formally, the corresponding Euler-Lagrange equations are
\bqn\label{EL}
\begin{split}
0=
\ve^2\,\partial_{x}\left(\frac{\partial_{x}u}{\sqrt{1+\ve^2(\partial_{x}u)^2}}\right)-\ve^2\,\lambda \left(\ve^2\vert\partial_{x}\psi(x,u(x))\vert^2+\vert\partial_z\psi(x,u(x))\vert^2\right)
\end{split}
\eqn
for $x\in (-1,1)$, where we have set
$$
\lambda=\frac{\epsilon_0V^2}{TL\ve^3}\ .
$$
We now take again time into account and derive the dynamics of the dimensionless deflection $u=u(\hat{t},x)$ by means of Newton's second law. Letting  $\rho$ and $\delta $ denote the mass density per unit volume of the membrane and the membrane thickness, respectively, the sum over all forces equals  $\rho \delta  \partial_{\hat{t}}^2 u$. The elastic and electrostatic forces, given by the right hand side of equation~\eqref{EL}, are combined with a damping force of the form $-a\partial_{\hat{t}} u$ being linearly proportional to the velocity. This yields
\bqnn
\begin{split}
\rho\, \delta \, \partial_{\hat{t}}^2 u +a\,\partial_{\hat{t}} u =\ve^2\, \partial_{x}\left(\frac{\partial_{x}u}{\sqrt{1+\ve^2(\partial_{x}u)^2}}\right)
-\ve^2\,\lambda \left(\ve^2\vert\partial_{x}\psi(x,u(x))\vert^2+\vert\partial_z\psi(x,u(x))\vert^2\right)\ .
\end{split}
\eqnn
Finally, scaling time based on the strength of damping according to $t=\hat{t}\ve^2/a$ and setting \mbox{$\gamma:=\frac{\sqrt{\rho \delta }\ve}{a}$}, we derive for the dimensionless deflection the evolution problem
\bqn\label{evol}
\begin{split}
\gamma^2\, \partial_{t}^2 u +\partial_{t} u\, =\, \partial_{x}\left(\frac{\partial_{x}u}{\sqrt{1+\ve^2(\partial_{x}u)^2}}\right)
-\,\lambda \left(\ve^2\vert\partial_{x}\psi(x,u(x))\vert^2+\vert\partial_z\psi(x,u(x))\vert^2\right)
\end{split}
\eqn
for $t>0$ and $x\in I:=(-1,1)$. Instead of considering this hyperbolic equation, however, we assume in this paper  that viscous or damping forces dominate over inertial forces, i.e. we assume that $\gamma \ll 1$ and thus neglect the second order time derivative term in~\eqref{evol}. The membrane displacement $u=u(t,x)\in (-1,\infty)$ then evolves according to
\begin{equation}\label{u}
\partial_t u - \partial_x\left(\frac{\partial_x u}{\sqrt{1+\varepsilon^2(\partial_x u)^2}}\right) = - \lambda\ \left( \varepsilon^2\ |\partial_x\psi(t,x,u(x))|^2 + |\partial_z\psi(t,x,u(x))|^2 \right)\ ,
\end{equation}
for $t>0$ and $x\in I$
with clamped boundary conditions
\begin{equation}\label{bcu}
u(t,\pm 1)=0\ ,\quad t>0\ ,
\end{equation}
and initial condition
\begin{equation}\label{ic}
u(0,x)=u^0(x)\ ,\quad x\in I\ .
\end{equation}
In dimensionless variables,  equations \eqref{psihat}-\eqref{psihatbc} read
\bqn\label{psi}
\ve^2 \,\partial_x^2\psi+\partial_z^2\psi=0 \ ,\quad (x,z)\in \Omega(u(t))\ ,\quad t>0\ ,
\eqn
subject to the boundary conditions (extended continuously to the lateral boundary)
\bqn\label{bcpsi}
\psi(t,x,z)=\frac{1+z}{1+u(t,x)}\ ,\quad (x,z)\in\partial\Omega(u(t))\ ,\quad t>0\ .
\eqn
In the following we shall focus our attention on  \eqref{u}-\eqref{bcpsi}, its situation being depicted in Figure~\ref{fig1}.

%------------
\begin{figure}
%\centering\includegraphics[width=10cm]{figure2.pdf}
\centering\includegraphics[width=10cm]{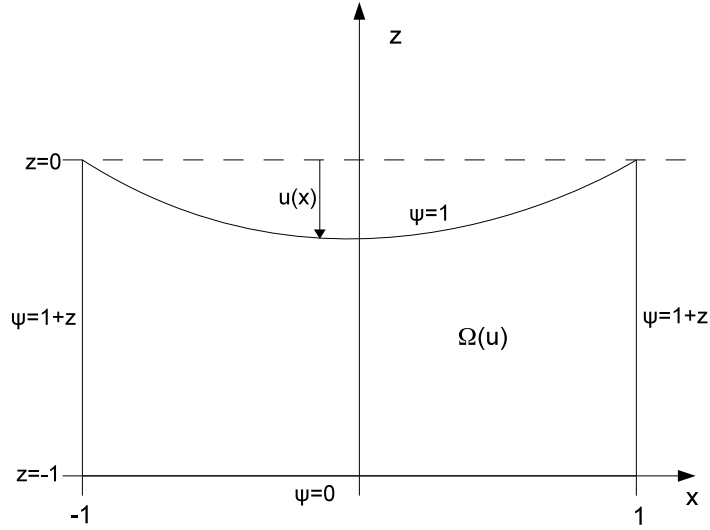}
\caption{\small Idealized electrostatic MEMS device.}\label{fig1}
\end{figure}
%------------

%%%%%%%%%%%%%%%%%%%%%%%%%%%%%%%%%%%%%%%%%%%%%%%%%%%%%%%%
%%%%%%%%%%%%%%%%%%%%%%%%%%%%%%%%%%%%%%%%%%%%%%%%%%%%%%%%
\subsection{Simplified Models}
%%%%%%%%%%%%%%%%%%%%%%%%%%%%%%%%%%%%%%%%%%%%%%%%%%%%%%%%
%%%%%%%%%%%%%%%%%%%%%%%%%%%%%%%%%%%%%%%%%%%%%%%%%%%%%%%%

Besides assuming that damping forces dominate over inertial forces and thus reducing equation \eqref{evol} to \eqref{u}, other simplifications of the model above have been considered as well in the literature. For instance,
restricting attention to small deformations of the membrane yields a linearized stretching term $\partial_x^2 u$ in \eqref{u}. The corresponding semilinear evolution problem with \eqref{u} being replaced by
\begin{equation}\label{UU}
\partial_t u - \partial_x^2 u = - \lambda\ \left( \varepsilon^2\ |\partial_x\psi(t,x,u(x))|^2 + |\partial_z\psi(t,x,u(x))|^2 \right)\ ,\quad x\in I\ ,\qquad t>0\ ,
\end{equation}
is investigated in \cite{ELW_CPAM}. It is shown therein that the problem  \eqref{bcu}-\eqref{UU} is well-posed locally in time. Moreover, solutions exist globally for small voltage values $\lambda$ while global existence is shown not to hold for high voltage values. It is also proven that, for small voltage values, there is an asymptotically stable steady-state solution. Finally, as the parameter $\ve$ approaches zero, the solutions are shown to converge toward the solutions of the so-called small aspect ratio model, see \eqref{z0} below. Indeed, letting $\ve=0$ (and applying a potential $V$ with $V^2 \sim \ve^3$ as suggested in \cite{BrubakerPelesko_EJAM}), one can solve  \eqref{psi}-\eqref{bcpsi} explicitly for the potential $\psi=\psi_0$, that is,
\bqn\label{psi0}
\psi_0(t,x,z)=\frac{1+z}{1+u_0(t,x)}\ ,\quad (t,x,z)\in [0,\infty)\times I\times(-1,0)\ ,
\eqn
and the displacement $u=u_0$ satisfies 
\begin{equation}\label{z0}
\begin{array}{rlll}
\partial_t u_0 - \partial_x^2 u_0 &\!\!\!=\!\!\!& - \displaystyle{\frac{\lambda}{(1+u_0)^2}}\,,\quad & x\in I\,, \quad t\in (0,\infty)\,, \\
u_0(t,\pm1) &\!\!\!=\!\!\!& 0\,, & t\in (0,\infty)\,, \\
u_0(0,x)&\!\!\!=\!\!\!& u^0(x)\,, & x\in I\, .
\end{array}
\end{equation}
In the limit $\ve\rightarrow 0$, the free boundary problem is thus reduced to the singular semilinear heat equation \eqref{z0} which has been studied thoroughly in recent years, see \cite{EspositoGhoussoubGuo} for a survey as well as e.g. \cite{FloresMercadoPeleskoSmyth_SIAP07,GhoussoubGuoNoDEA08,GuoJDE08,GuoJDE08_II,GuoHuWang09,GuoKavallaris,Hui11,LinYang,PeleskoSIAP02}. It is noteworthy to remark that the picture regarding pull-in voltage for the small aspect ratio model \eqref{z0} is rather complete. \\

Let us point out that \cite{ELW_CPAM} is apparently the first mathematical analysis of the parabolic free boundary problem  \eqref{bcu}-\eqref{UU} while the corresponding elliptic (i.e. steady-state) free boundary problem is investigated in \cite{LaurencotWalker_ARMA}. Moreover, we shall emphasize that the inclusion of non-small deformations is a feature of great physical relevance and, even though the results presented herein are reminiscent of the ones in  \cite{ELW_CPAM},  the quasilinear structure of \eqref{u} is by no means a trivial mathematical extension of \eqref{UU}.

%%%%%%%%%%%%%%%%%%%%%%%%%%%%%%%%%%%%%%%%%%%%%%%%%%%%%%%%
%%%%%%%%%%%%%%%%%%%%%%%%%%%%%%%%%%%%%%%%%%%%%%%%%%%%%%%%
\subsection{Main Results}
%%%%%%%%%%%%%%%%%%%%%%%%%%%%%%%%%%%%%%%%%%%%%%%%%%%%%%%%
%%%%%%%%%%%%%%%%%%%%%%%%%%%%%%%%%%%%%%%%%%%%%%%%%%%%%%%%

To state our findings on \eqref{u}-\eqref{bcpsi}, we introduce the spaces
\begin{equation}
W_{q,D}^{2\alpha}(I):=\left\{ 
\begin{array}{lcl} 
\{u\in  W_{q}^{2\alpha}(I)\,;\, u(\pm 1)=0\} & \text{ for } & 2\alpha \in \left( {\frac{1}{q}},2 \right]\ ,\\
& & \\
W_{q}^{2\alpha}(I) & \text{ for } & 0\le 2\alpha < {\frac{1}{q}}\ .
\end{array}
\right. \label{wap}
\end{equation}

We shall prove the following result regarding local and global existence of solutions:

%%%%%%%%%%%%%%%%%%%%%%%%%%%%%%%%%%%%%%%%%%%%%%%%%%%%%%%%
\begin{thm}[{\bf Well-Posedness}]\label{A}
Let {  $q\in (2,\infty)$}, $\varepsilon>0$, and consider an initial value $u^0\in  W_{q,D}^2(I)$ such that $u^0(x)>-1$ for $x\in I$. Then, the following are true:

\begin{itemize}

\item[(i)] For each voltage value $\lambda>0$, there is a unique maximal solution $(u,\psi)$ to \eqref{u}-\eqref{bcpsi} on the maximal interval of existence $[0,T_m^\varepsilon)$ in the sense that
$$u\in C^1\big([0,T_m^\varepsilon),L_q(I)\big)\cap C\big([0,T_m^\varepsilon), W_{q,D}^2(I)\big)
$$
satisfies \eqref{u}-\eqref{ic} together with
$$
u(t,x)>-1\ ,\quad (t,x)\in [0,T_m^\varepsilon)\times I\ , 
$$ 
and $\psi(t)\in W^2_{  2}\big(\Omega(u(t))\big)$ solves \eqref{psi}-\eqref{bcpsi} on $\Omega(u(t))$ for each $t\in [0,T_m^\varepsilon)$. 

\item[(ii)] If for each $\tau>0$ there is $\kappa(\tau)\in (0,1)$ such that $u(t)\ge -1+\kappa(\tau)$ and $\|u(t)\|_{W_q^2(I)}\le \kappa(\tau)^{-1}$ for $t\in [0,T_m^\varepsilon)\cap [0,\tau]$, then the solution exists globally, that is, $T_m^\varepsilon=\infty$.

\item[(iii)] If $u^0(x)\le 0$ for $x\in I$, then $u(t,x)\le 0$ for $(t,x)\in [0,T_m^\varepsilon)\times I$. If $u^0=u^0(x)$ is even with respect to $x\in I$, then, {for all $t\in [0,T_m^\varepsilon)$}, $u=u(t,x)$ and $\psi=\psi(t,x,z)$ are even with respect to $x\in I$ as well.

\item[(iv)] Given $\kappa\in (0,1)$, there are $\lambda_*(\kappa)>0$ and $r(\kappa)>0$ such that $T_m^\ve=\infty$ {  with $u(t,x)\ge -1+\kappa$ for $(t,x)\in [0,\infty)\times I$ provided that $\lambda\in (0,\lambda_*(\kappa))$ and $\|u^0\|_{W_q^2(I)}\le r(\kappa)$. In that case, $u$ enjoys the following additional regularity properties: 
$$
u\in BUC^\rho([0,\infty),W_{q,D}^{2-\rho}(I))\cap L_\infty([0,\infty),W_{q,D}^{2}(I))
$$ 
for some $\rho>0$ small.}
\end{itemize}
\end{thm}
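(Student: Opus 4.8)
The plan is to decouple the system by solving the elliptic part for $\psi$ in terms of $u$ and to recast \eqref{u}--\eqref{ic} as a single quasilinear parabolic Cauchy problem for $u$. Fix once and for all $2\alpha\in\big(1+\tfrac{1}{q},2\big)$, so that $W_q^{2\alpha}(I)\hookrightarrow C^1(\bar{I})$ (this uses $q>2$). Given an admissible state $v\in W_{q,D}^{2\alpha}(I)$ with $v>-1$ on $\bar{I}$, flatten $\Om(v)$ onto the fixed rectangle $I\times(0,1)$ via $(x,z)\mapsto\big(x,(1+z)/(1+v(x))\big)$; then \eqref{psi}--\eqref{bcpsi} becomes a linear elliptic Dirichlet problem on $I\times(0,1)$ whose coefficients are H\"older continuous functions of $(v,\partial_x v)$, uniquely solvable in $W_2^2(I\times(0,1))$. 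Pulling back gives $\psi=\psi_v\in W_2^2(\Om(v))$, and taking the trace of $\nabla\psi_v$ on the graph of $v$ produces the electrostatic load
\[
g(v)(x):=\ve^2\,|\partial_x\psi_v(x,v(x))|^2+|\partial_z\psi_v(x,v(x))|^2 .
\]
The key analytic input — essentially the elliptic analysis already needed for the semilinear model in \cite{ELW_CPAM} — is that $v\mapsto g(v)$ is well defined and locally Lipschitz from $\{v\in W_{q,D}^{2\alpha}(I):v>-1\}$ into $L_q(I)$, the needed integrability coming from $\nabla\psi_v\in W_2^1(\Om(v))$ whose boundary trace lies in $L_p$ of the graph for every $p<\infty$.

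Since $\partial_x\big(\partial_x u/\sqrt{1+\ve^2(\partial_x u)^2}\big)=\big(1+\ve^2(\partial_x u)^2\big)^{-3/2}\,\partial_x^2 u$, problem \eqref{u}--\eqref{ic} takes the abstract form $\dot u+\Ac(u)u=-\lambda\,g(u)$, $u(0)=u^0$, where $\Ac(v)w:=-a(v)\,\partial_x^2 w$ and $a(v):=\big(1+\ve^2(\partial_x v)^2\big)^{-3/2}$. For $v\in W_{q,D}^{2\alpha}(I)$ the coefficient $a(v)$ is continuous, lies in $(0,1]$, and is bounded below by a positive constant depending only on $\|v\|_{W_q^{2\alpha}(I)}$, so $\Ac(v)\in\mathcal{L}\big(W_{q,D}^2(I),L_q(I)\big)$ is uniformly elliptic and $-\Ac(v)$ generates a strongly continuous analytic semigroup on $L_q(I)$; moreover, since $p\mapsto(1+\ve^2p^2)^{-3/2}$ is globally Lipschitz, $v\mapsto\Ac(v)$ is locally Lipschitz from $W_{q,D}^{2\alpha}(I)$ into $\mathcal{L}\big(W_{q,D}^2(I),L_q(I)\big)$. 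Together with the Lipschitz property of $g$, the abstract theory of quasilinear parabolic problems due to Amann — with regularity space $W_{q,D}^2(I)$, interpolation space $W_{q,D}^{2\alpha}(I)$, and $u^0\in W_{q,D}^2(I)$ — provides a unique maximal solution with exactly the regularity asserted in (i), and $\psi(t):=\psi_{u(t)}$ is the associated potential; this settles (i). Part (ii) is the accompanying continuation criterion: were $T_m^\ve<\infty$, then as $t\uparrow T_m^\ve$ the orbit would have to leave every compact subset of $W_{q,D}^{2\alpha}(I)$ or approach the singular set $\{v=-1\}$; the hypothesis of (ii), applied with some $\tau>T_m^\ve$, rules out both — a bounded subset of $W_q^2(I)$ is relatively compact in $W_{q,D}^{2\alpha}(I)$, and $u(t)\ge-1+\kappa(\tau)$ keeps the solution off $\{v=-1\}$ — so $T_m^\ve=\infty$.

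For (iii), note that $g(v)\ge0$ for every admissible $v$. If $u^0\le0$, then along the solution $u$ satisfies the linear differential inequality $\partial_t u-a(u)\,\partial_x^2 u=-\lambda\,g(u)\le0$ with $u(t,\pm1)=0$ and $u(0,\cdot)\le0$, so the parabolic comparison principle (equivalently, positivity of the evolution operator generated by the time-dependent uniformly elliptic operators $\Ac(u(t))$) yields $u(t,x)\le0$. If $u^0$ is even in $x$, then $\tilde{u}(t,x):=u(t,-x)$ solves the same problem: the operator $\partial_x\big(\partial_x\,\cdot/\sqrt{1+\ve^2(\partial_x\,\cdot)^2}\big)$ commutes with $x\mapsto-x$, while $\Om(u(t))$ is symmetric and the Dirichlet datum $(1+z)/(1+u(t,x))$ in \eqref{bcpsi} is even in $x$, whence uniqueness for the elliptic problem forces $\psi(t,\cdot,\cdot)$, and therefore $g(u(t))$, to be even in $x$; uniqueness in (i) then gives $\tilde{u}=u$, and consequently $\psi$ is even in $x$ as well.

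It remains to establish (iv), which is the substantive part. Linearizing about the rest state $0$ (where $a(0)=1$), the leading operator is the Dirichlet Laplacian $\mathbb{A}:=-\partial_x^2$ on $L_q(I)$ with domain $W_{q,D}^2(I)$, which generates an exponentially decaying analytic semigroup (its spectral bound equals $\pi^2/4>0$) and has maximal regularity; write \eqref{u} as $\dot u+\mathbb{A}u=h(u)$ with $h(v):=\big(a(v)-1\big)\,\partial_x^2 v-\lambda\,g(v)$. On the admissible set $\{v\ge-1+\kappa\}$, the bounds $|a(v)-1|\le C\,\ve^2|\partial_x v|^2$ and the Lipschitz estimate on $g$ give $\|h(v)\|_{L_q(I)}\le C\,\|v\|_{W_q^{2\alpha}(I)}\|v\|_{W_q^2(I)}+\lambda\,C(\kappa)$ together with a matching Lipschitz estimate for $v\mapsto h(v)$ whose constant is $\le C\big(\|v\|_{W_q^{2\alpha}(I)}+\lambda\big)$. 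One then runs a contraction argument for the variation-of-constants formula $u(t)=e^{-t\mathbb{A}}u^0+\int_0^t e^{-(t-s)\mathbb{A}}h(u(s))\,\rd s$ in a ball of small radius $R$ in a function space on $[0,\infty)$ encoding the exponential decay of $\{e^{-t\mathbb{A}}\}$ (an exponentially weighted maximal-regularity space, or such a space intersected with $L_\infty\big([0,\infty),W_{q,D}^2(I)\big)$); the leading nonlinearity $(a(v)-1)\partial_x^2 v$ being superlinear in $v$ and $\lambda$ being small make both the self-mapping and the contraction work, with $R=R(\lambda,\|u^0\|_{W_q^2(I)})\to0$ as $(\lambda,\|u^0\|_{W_q^2(I)})\to0$. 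Choosing $R$ so small that $W_q^2(I)\hookrightarrow C(\bar{I})$ forces $\|u(t)\|_{L_\infty(I)}\le1-\kappa$ for all $t\ge0$ ensures $u(t,x)\ge-1+\kappa$, which is precisely the constraint under which the estimates for $g$ were used, so the construction is self-consistent; by uniqueness this global solution coincides with the maximal solution of (i), hence $T_m^\ve=\infty$ with the stated uniform lower bound. The extra regularity $u\in BUC^\rho([0,\infty),W_{q,D}^{2-\rho}(I))$ is then read off from parabolic smoothing of the Duhamel formula ($\{e^{-t\mathbb{A}}\}$ decays exponentially and $h(u(\cdot))\in L_\infty([0,\infty),L_q(I))$, yielding $\rho$-H\"older continuity in time into $W_{q,D}^{2-\rho}(I)$ for small $\rho>0$), while $u\in L_\infty([0,\infty),W_{q,D}^2(I))$ is built into the fixed-point space. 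The main obstacle throughout is the elliptic analysis behind $g$ — regularity of $\psi_v$ and Lipschitz dependence of $g(v)$ on $v$ over the $v$-dependent, corner-endowed domain $\Om(v)$ — and, for (iv), closing the global estimate self-consistently with the non-degeneracy constraint $u>-1$; the quasilinear principal part, by contrast, is comparatively harmless since $a(u)$ depends smoothly on $\partial_x u$ and $q>2$ gives $W_q^2(I)\hookrightarrow C^1(\bar{I})$.
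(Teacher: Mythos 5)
Your overall architecture for (i)--(iii) matches the paper's: transform \eqref{psi}--\eqref{bcpsi} to the fixed rectangle, package the electrostatic load as a nonlinear map $g$ of $u$, write \eqref{u} as the abstract quasilinear Cauchy problem $\dot u + A(u)u = -\lambda g(u)$, and run Amann's quasilinear evolution-operator theory in the scale $W_{q,D}^{2\alpha}(I)\subset W_{q,D}^2(I)$. You have also correctly identified that the source space for the Lipschitz estimate on $g$ must be the \emph{weaker} space $W_{q,D}^{2\alpha}(I)$ with $2\alpha<2$ — this is exactly what is forced by \cite[II.~Thm.~5.2.1, Thm.~5.3.1]{LQPP} and is pointed out in Remark~\ref{Remm}.

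However, there is a genuine gap where you dismiss the Lipschitz estimate for $g$ from $W_{q,D}^{2\alpha}(I)$ as ``essentially the elliptic analysis already needed for the semilinear model in \cite{ELW_CPAM}''. It is not: \cite[Prop.~2.1]{ELW_CPAM} only establishes Lipschitz continuity from the \emph{full} space $W_{q,D}^{2}(I)$ (i.e.\ $\xi=0$ in the paper's notation), and the refinement to $\xi>0$ is precisely the new technical content of Section~\ref{Sec2}, requiring Lemmas~\ref{L1t}--\ref{L2a} (uniform interpolated solvability of $\mathcal{A}(v)$, Lipschitz dependence of $\mathcal{A}(v)$ and $f_v$ in $W_{2,D}^{-\alpha}(\Omega)$, and the negative-order Sobolev multiplication estimates needed to handle the $\partial_x^2 v$ coefficient when $v$ is only controlled in $W_q^{2-\xi}$). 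Your bald appeal to $\nabla\psi_v\in W_2^1(\Omega(v))$ gives boundedness of $g$, not the Lipschitz estimate from the weaker source space. Relatedly, your chosen \emph{target} space $L_q(I)$ is too weak for the fixed-point map to close in $W_{q,D}^2(I)$: with $\|U_{A}(t,s)\|_{\ml(L_q,W_{q,D}^2)}\sim (t-s)^{-1}$ the Duhamel integral diverges, and the paper avoids this by showing $g_\ve$ maps into $W_{2,D}^{2\sigma}(I)\hookrightarrow W_{q,D}^{2\sigma-1/2+1/q}(I)$ with $2\sigma>1/2-1/q$, so the singularity in \eqref{11}, \eqref{zui} is integrable without invoking time-H\"older norms of $g(u(\cdot))$. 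One could try to close with $g$ only into $L_q$ plus H\"older-in-time right-hand side, but that introduces the H\"older seminorm of $v$ into the self-mapping bound and is not what you argued.

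For (iv) your route — linearize about $0$, treat $(a(v)-1)\partial_x^2 v$ as a small quasilinear perturbation, and contract in an exponentially weighted maximal-regularity space — is plausible and genuinely different from the paper's. The paper instead observes that its single fixed-point map $\Lambda$ already comes with an evolution-operator bound carrying the factor $e^{-\vartheta(t-s)}$ with $\vartheta>0$, so the same contraction estimates \eqref{zui}--\eqref{122a} close \emph{uniformly in $\tau$} as soon as $\lambda$ and $\|u^0\|_{W_q^2}$ are small; global existence and the $BUC^\rho\cap L_\infty$ regularity then fall out of membership in $\mathcal{V}_\tau(\kappa)$ for every $\tau$. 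The advantage of the paper's route is that no separate weighted maximal-regularity apparatus is required. Your approach would additionally have to verify that the perturbation $(a(v)-1)\partial_x^2 v$ is small as a \emph{Lipschitz} map relative to the maximal-regularity norm — this is where the smallness of $\|v\|_{W_q^{2\alpha}}$ is needed and where care must be taken that the smallness constraint $u\ge -1+\kappa$ stays consistent with the ball radius $R$, a point you flag but do not pin down.
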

%%%%%%%%%%%%%%%%%%%%%%%%%%%%%%%%%%%%%%%%%%%%%%%%%%%%%%%%

Note that part (iv) of Theorem~\ref{A} provides uniform estimates on $u$ in the $W_{q}^2(I)$-norm and ensures that $u$ never touches down on -1, not even in infinite time. In contrast to the semilinear case considered in \cite{ELW_CPAM}, the global existence result for the quasilinear equation \eqref{u} requires initially a small deformation, see also  Remark~\ref{Remm} below. The proof of Theorem~\ref{A} is the content of Section~\ref{Sec3}. It is based on interpreting \eqref{u} as a abstract quasilinear Cauchy problem which allows us to employ the powerful theory of evolution operators developed in \cite{LQPP}. Let us emphasize at this point that the regularity properties of the right-hand side of equation \eqref{u} established in \cite{ELW_CPAM} are not sufficient to handle the quasilinear character of the curvature operator and we consequently have to derive Lipschitz properties of the right-hand side of \eqref{u} in weaker topologies than in \cite{ELW_CPAM}. This is the purpose of Section~\ref{Sec2}.

\medskip

Regarding existence and asymptotic stability of steady-state solutions to \eqref{u}-\eqref{bcpsi} we have a similar result as in \cite[Thm.~1]{LaurencotWalker_ARMA} and \cite[Thm.1.3]{ELW_CPAM}.

%%%%%%%%%%%%%%%%%%%%%%%%%%%%%%%%%%%%%%%%%%%%%%%%%%%%%%%%
\begin{thm}[{\bf Asymptotically Stable Steady-State Solutions}]\label{TStable}
Let $q\in (2,\infty)$ and $\ve>0$. 
\begin{itemize}
\item[(i)] Let $\kappa\in (0,1)$. There are $\delta=\delta(\kappa)>0$ and an analytic function $$[\lambda\mapsto U_\lambda]:[0,\delta)\rightarrow W_{q,D}^2(I)$$ such that $(U_\lambda,\Psi_\lambda)$ is the unique steady-state to
\eqref{u}-\eqref{bcpsi} satisfying \mbox{$\|U_\lambda\|_{W_{q,D}^2(I)}\le 1/\kappa$} with $U_\lambda\ge -1+\kappa$ and $\Psi_\lambda\in W_2^2(\Omega(U_\lambda))$ when $\lambda\in (0,\delta)$. The steady-state possesses the additional regularity
\begin{equation}
\begin{split}
U_\lambda &\in C^{2+\alpha}\big([-1,1]\big)\ ,\\ 
\Psi_\lambda &\in W_2^2\big(\Omega(U_\lambda)\big) \cap C\big(\overline{\Omega(U_\lambda)}\big)\cap C^{2+\alpha}\big(\Omega(U_\lambda)\cup\Gamma(U_\lambda)\big)\ , 
\end{split} \label{new1}
\end{equation}
where $\alpha\in [0,1)$ is arbitrary and  $\Gamma(U_\lambda)$ denotes the boundary of $\Omega(U_\lambda)$ without corners.
Moreover, $U_\lambda$ is negative, convex, and even with $U_0=0$ and $\Psi_\lambda=\Psi_\lambda(x,z)$ is even with respect to $x\in I$.

\item[(ii)] Let $\lambda\in (0,\delta)$. There are $\omega_0,m,R>0$ such that for each initial value \mbox{$u^0\in W_{q,D}^2(I)$} with $\|u^0-U_\lambda\|_{W_{q,D}^2} <m$,  \eqref{u}-\eqref{bcpsi} has a unique global solution $(u,\psi)$ with
$$
u\in C^1\big([0,\infty),L_q(I)\big)\cap C\big([0,\infty), W_{q,D}^2(I)\big)\ ,\qquad \psi(t)\in W_2^2\big(\Omega(u(t))\big)\ ,\quad t\ge 0\ ,
$$
and
$$
u(t,x)>-1\ ,\quad (t,x)\in [0,\infty)\times I\ .
$$
Moreover,
\bqn\label{est}
\|u(t)-U_\lambda\|_{W_{q,D}^2(I)}+\|\partial_t u(t)\|_{L_{q}(I)} \le R e^{-\omega_0 t} \|u^0-U_\lambda\|_{W_{q,D}^2(I)}\ ,\quad t\ge 0\ .
\eqn
\end{itemize}
\end{thm}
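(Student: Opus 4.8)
The plan is to derive part~(i) from an analytic implicit function theorem, as for the elliptic free boundary problem in \cite{LaurencotWalker_ARMA}, and part~(ii) from the principle of linearized stability for quasilinear parabolic equations underlying the local theory of Section~\ref{Sec3}, as in \cite{ELW_CPAM}. Concerning part~(i), a steady state of \eqref{u}-\eqref{bcpsi} is a function $u\in W_{q,D}^2(I)$ with $u>-1$ solving $\partial_x\big((1+\ve^2(\partial_x u)^2)^{-1/2}\partial_x u\big)=\lambda\, g_\ve(u)$ in $I$, where
\[
g_\ve(u)(x):=\ve^2|\partial_x\psi_u(x,u(x))|^2+|\partial_z\psi_u(x,u(x))|^2
\]
and $\psi_u$ solves \eqref{psi}-\eqref{bcpsi} on $\Omega(u)$; the real-analytic dependence of $g_\ve$ on $u$ near $u\equiv 0$ is available from Section~\ref{Sec2}. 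Setting $F(\lambda,u):=\partial_x\big((1+\ve^2(\partial_x u)^2)^{-1/2}\partial_x u\big)-\lambda\, g_\ve(u)$, one has $F(0,0)=0$ (with $\psi_0(x,z)=1+z$) and $\partial_u F(0,0)=\partial_x^2\in\ml(W_{q,D}^2(I),L_q(I))$ is an isomorphism. The analytic implicit function theorem therefore provides $\delta=\delta(\kappa)>0$ and an analytic branch $[\lambda\mapsto U_\lambda]\colon[0,\delta)\to W_{q,D}^2(I)$ emanating from $U_0=0$; after shrinking $\delta$ we may assume $\|U_\lambda\|_{W_{q,D}^2}\le 1/\kappa$ and $U_\lambda\ge -1+\kappa$ for $\lambda\in(0,\delta)$, and uniqueness of the steady state in this set follows as in \cite{LaurencotWalker_ARMA} by combining the local uniqueness of the branch with an a~priori estimate. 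The additional regularity \eqref{new1} is obtained by a Schauder bootstrap: since $q>2$ one has $U_\lambda\in W_{q,D}^2(I)\hookrightarrow C^{1+\beta}([-1,1])$ for some $\beta>0$, so $\psi_u$ inherits interior and up-to-$\Gamma(U_\lambda)$ elliptic regularity, whence $g_\ve(U_\lambda)\in C^{\beta}(I)$; feeding this back into the ODE for $U_\lambda$ gives $U_\lambda\in C^{2+\beta}([-1,1])$, and iterating yields \eqref{new1} for every $\alpha\in[0,1)$. Finally, $g_\ve(U_\lambda)\ge 0$ (indeed $g_\ve(0)=1$, hence $g_\ve(U_\lambda)>0$ for $\delta$ small) forces, after unravelling the divergence form and using $U_\lambda(\pm1)=0$, $\partial_x^2 U_\lambda\ge 0$, so $U_\lambda$ is convex and negative; evenness of $U_\lambda$ and $\Psi_\lambda$ in $x$ follows from the invariance of the problem under $x\mapsto -x$ together with the uniqueness just established.

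For part~(ii), write $u=U_\lambda+v$ and reformulate \eqref{u}-\eqref{bcpsi} as the quasilinear Cauchy problem $\partial_t v+\mathcal{A}(v)v=\mathcal{G}(v)$, $v(0)=u^0-U_\lambda$, on the ground space $L_q(I)$ with regularity space $W_{q,D}^2(I)$, where $\mathcal{A}(v)w:=-\big(1+\ve^2(\partial_x(U_\lambda+v))^2\big)^{-3/2}\partial_x^2 w$ and $\mathcal{G}(v):=-\lambda\, g_\ve(U_\lambda+v)-\mathcal{A}(v)U_\lambda$; since $U_\lambda$ is a steady state, $\mathcal{G}(0)=0$, so $v\equiv 0$ is an equilibrium. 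The Fréchet derivative of $\mathcal{G}(\cdot)-\mathcal{A}(\cdot)(\cdot)$ at $v=0$ equals $-A_\lambda$, where
\[
A_\lambda w=-\big(1+\ve^2(\partial_x U_\lambda)^2\big)^{-3/2}\partial_x^2 w+\frac{3\ve^2\,(\partial_x U_\lambda)(\partial_x^2 U_\lambda)}{\big(1+\ve^2(\partial_x U_\lambda)^2\big)^{5/2}}\,\partial_x w+\lambda\, g_\ve'(U_\lambda)w
\]
has domain $W_{q,D}^2(I)$ and $g_\ve'(U_\lambda)$ is of lower order by Section~\ref{Sec2}. Since $U_0\equiv 0$, we have $A_0=-\partial_x^2$, whose spectrum on $L_q(I)$ is $\{(k\pi/2)^2:k\in\N\}\subset(\pi^2/4,\infty)$. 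Because $\lambda\mapsto U_\lambda$ is analytic into $W_{q,D}^2(I)\hookrightarrow C^1([-1,1])$, the map $\lambda\mapsto A_\lambda$ is continuous into $\ml(W_{q,D}^2(I),L_q(I))$, and as $A_0$ has compact resolvent the spectrum varies continuously; shrinking $\delta$ we obtain $\omega_0>0$ with $\sigma(A_\lambda)\subset\{\mathrm{Re}\,z>2\omega_0\}$ for all $\lambda\in(0,\delta)$, so $-A_\lambda$ generates an exponentially decaying analytic semigroup on $L_q(I)$.

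The conclusion then follows from the principle of linearized stability for quasilinear parabolic problems (see \cite{LQPP}): $\mathcal{A}$ and $\mathcal{G}$ are continuously differentiable near $v=0$ by the Lipschitz and differentiability estimates of Section~\ref{Sec2}, $-\mathcal{A}(0)$ generates an analytic semigroup on $L_q(I)$, $\mathcal{G}(0)=0$, and $A_\lambda$ has spectrum in a right half-plane; hence $v\equiv 0$ is exponentially asymptotically stable. This yields $m,R>0$ such that, for $\|u^0-U_\lambda\|_{W_{q,D}^2}<m$, \eqref{u}-\eqref{bcpsi} has a unique global solution with $u(t)>-1$ and $\|u(t)-U_\lambda\|_{W_{q,D}^2}\le Re^{-\omega_0 t}\|u^0-U_\lambda\|_{W_{q,D}^2}$; the bound for $\partial_t u(t)$ in \eqref{est} is then read off from equation~\eqref{u} together with parabolic smoothing. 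The step I expect to be the main obstacle is the interaction between the quasilinear structure and the spectral perturbation: one must verify, using precisely the weaker-topology estimates of Section~\ref{Sec2} and the fixed-domain reformulation of the potential problem hidden in $g_\ve$ and $g_\ve'$, that $\mathcal{A}$ and $\mathcal{G}$ have the differentiability demanded by the abstract theorem and, above all, that $\lambda\mapsto A_\lambda$ is continuous in the generator topology, so that the spectral gap of $A_0=-\partial_x^2$ persists for all small $\lambda>0$; once this is secured, the estimate \eqref{est} is routine.
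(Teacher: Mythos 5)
Your proposal follows the same two-step strategy as the paper: the implicit function theorem applied to the steady-state equation for part~(i), and the principle of linearized stability for part~(ii). The only technical deviation is in securing the spectral gap for the linearization: you argue by continuity of the spectrum of $A_\lambda$ from $A_0=-\partial_x^2$, whereas the paper writes the linearization as $A(\ve U_\lambda)+B_\lambda$, invokes the $\lambda$-uniform resolvent bound for $A(\ve U_\lambda)$ from Lemma~\ref{SG}, and treats $B_\lambda$ as an explicitly $\lambda$-small perturbation via \cite[I.~Cor.~1.4.3]{LQPP}, using the steady-state identity $\partial_x^2 U_\lambda = \lambda(1+\ve^2(\partial_x U_\lambda)^2)^{3/2}g_\ve(U_\lambda)$ to make the first-order coefficient manifestly $O(\lambda)$ rather than leaving it as $3\ve^2(\partial_x U_\lambda)(\partial_x^2 U_\lambda)(1+\ve^2(\partial_x U_\lambda)^2)^{-5/2}$ as you do.
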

%%%%%%%%%%%%%%%%%%%%%%%%%%%%%%%%%%%%%%%%%%%%%%%%%%%%%%%%

Part (ii) of Theorem~\ref{TStable} shows local exponential stability of the steady-states derived in part~(i). We also point out that the potential $\psi$ converges exponentially to $\Psi_\lambda$ in the $W_2^2$-norm as $t\rightarrow \infty$, see Remark~\ref{RR} for a precise statement. The proof of Theorem~\ref{TStable} is given in Section~\ref{SectTStable} and relies on the Implicit Function Theorem for part (i) and the Principle of Linearized Stability for part (ii).\\

Clearly, Theorem~\ref{TStable} is just a local result with respect to $\lambda$ values. However, we next show that there is an upper threshold for $\lambda$ above which no steady-state solution exists. This is expected on physical grounds and is related to the ``pull-in'' instability already mentioned in the introduction. 

\begin{thm}[{\bf Non-Existence of Steady-State Solutions}]\label{NoSS}
Let $q\in (2,\infty)$ and $\ve>0$. 
There is $\bar{\lambda}(\ve)>0$ such that, if $\lambda\ge \bar{\lambda}(\ve)$, then there is no steady-state solution $(u,\psi)$ to \eqref{u}-\eqref{bcpsi} such that $u\in W_{q,D}^2(I)$, $\psi\in W_2^2(\Omega(u))$, and $u(x)>-1$ for $x\in I$. In addition, $\bar{\lambda}(\ve)\rightarrow 2$ as $\ve\rightarrow 0$.
\end{thm}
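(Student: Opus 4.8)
The plan is to work with the elliptic (stationary) counterpart of \eqref{u}-\eqref{bcpsi}: a steady state is a pair $(u,\psi)$ with $u\in W_{q,D}^2(I)$, $u>-1$ on $I$, and $\psi\in W_2^2(\Omega(u))$ solving \eqref{psi}-\eqref{bcpsi}, such that
\[
\partial_x\left(\frac{\partial_x u}{\sqrt{1+\varepsilon^2(\partial_x u)^2}}\right)=\lambda\,g\quad\text{on }I,\qquad u(\pm1)=0,
\]
where $g(x):=\varepsilon^2|\partial_x\psi(x,u(x))|^2+|\partial_z\psi(x,u(x))|^2\ge0$. Writing $w:=\partial_x u/\sqrt{1+\varepsilon^2(\partial_x u)^2}$ one has $|w|\le|\partial_x u|$ pointwise and $\partial_x w=\lambda g\ge0$, so $w$ is non-decreasing; since a direct computation gives $\partial_x w=\partial_x^2u/(1+\varepsilon^2(\partial_x u)^2)^{3/2}$, this means $\partial_x^2u\ge0$, i.e.\ $u$ is convex on $[-1,1]$. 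Together with $u(\pm1)=0$ this forces $-1<u\le0$ on $I$ and, by convexity, $\int_I|\partial_x u|\,dx=2\|u\|_{L_\infty(I)}<2$.

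Next I would establish the central ingredient, a quantitative lower bound on the electrostatic forcing term $g$. Since $\psi\equiv1$ on the membrane $\{z=u(x)\}$, its tangential derivative there vanishes, whence $\partial_x\psi=-(\partial_x u)\,\partial_z\psi$ and $g(x)=(1+\varepsilon^2(\partial_x u(x))^2)\,|\partial_z\psi(x,u(x))|^2$ on the free boundary. Comparing $\psi$ with $\phi(x,z):=(1+z)/(1+u(x))$ — which coincides with $\psi$ on all of $\partial\Omega(u)$ and satisfies $\Delta_\varepsilon(\psi-\phi)=-\varepsilon^2\partial_x^2\phi$, $\Delta_\varepsilon:=\varepsilon^2\partial_x^2+\partial_z^2$ — and using the convexity of $u$ together with the a priori bounds above, one derives an estimate of the form
\[
g(x)\ \ge\ \frac{c_\varepsilon}{(1+u(x))^2}\ \ge\ c_\varepsilon\qquad(x\in I)
\]
for some constant $c_\varepsilon\in(0,1]$ with $c_\varepsilon\to1$ as $\varepsilon\to0$, the limiting identity being $g_0=(1+u_0)^{-2}$ for the small aspect ratio model \eqref{psi0}-\eqref{z0}.

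With these ingredients I would test the membrane equation against the tent function $\varphi(x):=1-|x|$, which satisfies $\varphi(\pm1)=0$, $\varphi\ge0$, $\int_I\varphi\,dx=1$ and $|\varphi'|=1$ a.e.\ on $I$. Integrating by parts (the boundary terms vanish since $\varphi(\pm1)=0$) and invoking the estimates just obtained,
\[
\lambda c_\varepsilon\ \le\ \lambda\int_I g\,\varphi\,dx\ =\ -\int_I w\,\varphi'\,dx\ \le\ \int_I|w|\,dx\ \le\ \int_I|\partial_x u|\,dx\ <\ 2,
\]
so that $\lambda<2/c_\varepsilon=:\bar\lambda(\varepsilon)$. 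Consequently no steady state can exist when $\lambda\ge\bar\lambda(\varepsilon)$, and $\bar\lambda(\varepsilon)=2/c_\varepsilon\to2$ as $\varepsilon\to0$, which is exactly the claim.

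The main obstacle is the second step, namely the $\varepsilon$-quantitative lower bound for the gradient trace of $\psi$ on the free boundary. The clean comparison $\psi\le\phi$ — which would immediately give $g\ge(1+u)^{-2}$ and even $\bar\lambda(\varepsilon)\equiv2$ — is \emph{not} a direct consequence of the maximum principle, since the sign of $\Delta_\varepsilon(\psi-\phi)=-\varepsilon^2\partial_x^2\phi$ is that of $(\partial_x^2u)(1+u)-2(\partial_x u)^2$, which is not controlled a priori. One must therefore estimate $\psi-\phi$, and in particular its normal derivative on $\{z=u(x)\}$, by a barrier or representation-formula argument adapted to the thin domain $\Omega(u)$, tracking the explicit $\varepsilon$-dependence and the behaviour of $\psi$ near the corners of $\partial\Omega(u)$. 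It is precisely this loss that yields a constant $c_\varepsilon$ strictly below $1$ together with $c_\varepsilon\to1$, and hence the statement $\bar\lambda(\varepsilon)\to2$ rather than $\bar\lambda(\varepsilon)=2$.
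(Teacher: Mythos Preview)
The gap you yourself flag is real: your argument hinges on a lower bound $g\ge c_\varepsilon$ on the electrostatic forcing, and the barrier/representation approach you sketch for $\psi-\phi$ is not carried out (and, as you note, the sign of $\Delta_\varepsilon(\psi-\phi)$ is not controlled, so it is unclear it would work). The paper does not proceed this way at all. It simply invokes \cite[Lem.~4.1]{LaurencotWalker_ARMA}, which gives the \emph{sharp, $\varepsilon$-independent} bound
\[
\partial_z\psi(x,u(x))\ \ge\ 1\ ,\qquad x\in I\ ,
\]
using only the convexity of $u$. With this, $g(x)=(1+\varepsilon^2(\partial_x u)^2)\,|\partial_z\psi(x,u(x))|^2\ge 1$, so your $c_\varepsilon$ can be taken equal to~$1$ for every $\varepsilon>0$ --- no asymptotics needed.

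Once that bound is available, your concluding step is genuinely different from the paper's. The paper rewrites the equation as $\partial_x^2 u \ge \lambda(1+\varepsilon^2(\partial_x u)^2)^{5/2}$, introduces $J(r):=\int_0^r(1+s^2)^{-5/2}\,\rd s$, obtains the first-order inequality $\partial_x J(\varepsilon\,\partial_x u)\ge\lambda\varepsilon$, integrates from the minimum of $u$, and finishes with Jensen's inequality, arriving at $\bar\lambda(\varepsilon)=\min\{2J(\varepsilon),2/3\}/\varepsilon\to 2$. Your route --- test against the tent function $\varphi(x)=1-|x|$ and bound $\int_I|w|\le\int_I|\partial_x u|=2\|u\|_{L_\infty}<2$ --- is shorter and, combined with $c_\varepsilon=1$, actually yields the cleaner threshold $\bar\lambda(\varepsilon)=2$ for all $\varepsilon>0$, a slight sharpening of the paper's constant. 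The paper's argument carries along the extra curvature factor $(1+\varepsilon^2(\partial_x u)^2)^{5/2}$ but derives no benefit from it here.
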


Similar results have already been obtained for related models, including the small aspect ratio model \cite{BrubakerPelesko_EJAM,EspositoGhoussoubGuo} and for the stationary free boundary problem corresponding to \eqref{bcu}-\eqref{UU}, see \cite{LaurencotWalker_ARMA}.
The proof of Theorem~\ref{NoSS} relies on a lower bound on $\partial_z\psi (x,u(x))$ established in the latter paper and is given in Section~\ref{SectNoSS}.\\

The final issue we address is the connection between the free boundary problem \eqref{u}-\eqref{bcpsi} and the small aspect ratio limit \eqref{z0}. More precisely, we show the following convergence result:

%%%%%%%%%%%%%%%%%%%%%%%%%%%%%%%%%%%%%%%%%%%%%%%%%%%%%%%%
\begin{thm}[{\bf Small Aspect Ratio Limit}]\label{Bq}
Let $\lambda>0$, $q\in (2,\infty)$, and let $u^0\in W_{q,D}^2(I)$  with $-1<u^0(x)\le 0$ for $x\in I$. For $\varepsilon>0$ we denote the unique solution to \eqref{u}-\eqref{bcpsi} on the maximal interval of existence $[0,T_m^\varepsilon)$ by $(u_\varepsilon,\psi_\varepsilon)$. There are $\tau>0$, $\ve_0>0$, and $\kappa_0\in (0,1)$ depending only on $q$ and $u^0$ such that $\ T_m^\varepsilon\ge \tau$ with $u_\ve(t)\ge -1+\kappa_0$ and $\|u_\ve (t)\|_{W_{q}^2(I)}\le \kappa_0^{-1}$ for all $(t,\ve)\in [0,\tau]\times (0,\ve_0)$. 
Moreover, as $\ve\rightarrow 0$,
$$ 
u_{\varepsilon}\longrightarrow u_0\quad \text{in}\quad C^{1-\theta}\big([0,\tau], W_q^{2\theta}(I)\big)\ ,\quad 0<\theta<1\ ,
$$
and
\begin{equation}\label{z00} 
\psi_{\varepsilon}(t)\mathbf{1}_{\Omega(u_{\varepsilon}(t))}\longrightarrow \psi_{0}(t)\mathbf{1}_{\Omega(u_{0}(t))}\quad \text{in}\quad L_2\big(I\times (-1,0)\big)\ ,\quad t\in [0,\tau]\ ,
\end{equation}
where 
$$
u_0\in C^1\big([0,\tau],L_q(I)\big)\cap C\big([0,\tau],W_{q,D}^2(I)\big)
$$
is the unique solution to the small aspect ratio equation \eqref{z0} and  $\psi_0$ is the potential given in \eqref{psi0}. 
Furthermore, there is $\Lambda(u^0)>0$ such that the results above hold true for each  $\tau>0$ provided that $\lambda\in (0,\Lambda(u^0))$.
\end{thm}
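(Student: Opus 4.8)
I would treat \eqref{u} in the non-divergence form $\partial_t u_\varepsilon - a_\varepsilon(u_\varepsilon)\,\partial_x^2 u_\varepsilon = -\lambda\, g_\varepsilon(u_\varepsilon)$, where $a_\varepsilon(v):=(1+\varepsilon^2(\partial_x v)^2)^{-3/2}$ and $g_\varepsilon(v):=\varepsilon^2|\partial_x\psi_{\varepsilon,v}(\cdot,v)|^2+|\partial_z\psi_{\varepsilon,v}(\cdot,v)|^2$, $\psi_{\varepsilon,v}$ being the solution of \eqref{psi}--\eqref{bcpsi} on $\Omega(v)$. The first task is $\varepsilon$-uniform a priori control together with a uniform lower bound on the existence time; the second is to pass to the limit $\varepsilon\to0$ in the parabolic equation and in the elliptic problem separately. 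The key structural observation is that $a_\varepsilon\to1$ as $\varepsilon\to0$, and that on any set $\{v\in\Wqb^2(I):v\ge-1+\kappa,\ \|v\|_{W_q^2(I)}\le1/\kappa\}$ one has $a_\varepsilon(v)\ge(1+\|\partial_x v\|_{C(\bar I)}^2)^{-3/2}\ge c(\kappa)>0$ uniformly in $\varepsilon\in(0,1)$, so that $\partial_t-a_\varepsilon(v)\partial_x^2$ is uniformly parabolic on such sets, with constants independent of $\varepsilon\in(0,1)$; I would also use that the estimates of Section~\ref{Sec2} on $g_\varepsilon$ are uniform in $\varepsilon\in(0,1)$.

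\textbf{Uniform bounds and uniform existence time.} Inspecting the construction of Section~\ref{Sec3} (which rests on \cite{LQPP}) and tracking the constants in view of the uniform parabolicity above and of the uniform estimates on $g_\varepsilon$, the length $\tau$ of the interval of existence and the constant $\kappa_0$ bounding $\|u_\varepsilon(t)\|_{W_q^2(I)}$ and $\inf(1+u_\varepsilon(t))$ on $[0,\tau]$ depend only on $u^0$ and $q$; this yields $T_m^\varepsilon\ge\tau$ and the asserted bounds for $(t,\varepsilon)\in[0,\tau]\times(0,\varepsilon_0)$, the sign being furnished by Theorem~\ref{A}(iii). For the last assertion of the theorem, when $\lambda<\Lambda(u^0)$ a continuation argument keeps $u_\varepsilon$ away from $-1$ for all times: as long as $u_\varepsilon\ge-1+\kappa$ one has $g_\varepsilon(u_\varepsilon)\le C(\kappa)$ and $u_\varepsilon\le0$, so comparison with the linear problem $\partial_t w - a_\varepsilon(u_\varepsilon)\partial_x^2 w=-\lambda C(\kappa)$ with datum $u^0$, together with the $\varepsilon$-uniform parabolic smoothing bounding $\|u_\varepsilon(t)\|_{W_q^2(I)}$ from the $L_\infty$-bound and $\|g_\varepsilon(u_\varepsilon)\|_{L_q}$, closes the bootstrap for $\lambda$ small; then Theorem~\ref{A}(ii) gives $T_m^\varepsilon=\infty$ and the arguments below apply on every $[0,\tau]$. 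Recall also that \eqref{z0} is well-posed, with $u_0$ unique and, for $\lambda$ small, global.

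\textbf{The elliptic limit.} Let $\psi_{0,v}(x,z):=(1+z)/(1+v(x))$, which carries the boundary values \eqref{bcpsi} and satisfies $\partial_z^2\psi_{0,v}=0$; hence $w_{\varepsilon,v}:=\psi_{\varepsilon,v}-\psi_{0,v}$ vanishes on $\partial\Omega(v)$ and solves $\varepsilon^2\partial_x^2 w_{\varepsilon,v}+\partial_z^2 w_{\varepsilon,v}=-\varepsilon^2\partial_x^2\psi_{0,v}$ in $\Omega(v)$. Testing with $w_{\varepsilon,v}$ and using the Poincar\'e inequality in the $z$-direction (the domain being bounded there) gives $\|w_{\varepsilon,v}\|_{H^1(\Omega(v))}\le C(\kappa)\,\varepsilon$ whenever $v\ge-1+\kappa$ and $\|v\|_{W_q^2(I)}\le1/\kappa$. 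Taking $v=u_\varepsilon(t)$, together with the uniform convergence $u_\varepsilon(t)\to u_0(t)$ and the bound $|\Omega(u_\varepsilon(t))\,\triangle\,\Omega(u_0(t))|\le 2\|u_\varepsilon(t)-u_0(t)\|_{C(\bar I)}$, establishes \eqref{z00}. The convergence of the electrostatic force on the free boundary, i.e.\ $g_\varepsilon(v)\to(1+v)^{-2}$ in $L_q(I)$ uniformly for $v$ in the above set, requires trace estimates on $\Gamma(v)$ and is inherited from the small--aspect--ratio analysis of \cite{ELW_CPAM}, the elliptic problem for $\psi$ being the same there.

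\textbf{Passing to the limit, identification, and main obstacle.} Set $v_\varepsilon:=u_\varepsilon-u_0$, which solves $\partial_t v_\varepsilon-\partial_x^2 v_\varepsilon=F_\varepsilon:=(a_\varepsilon(u_\varepsilon)-1)\partial_x^2 u_\varepsilon-\lambda\big(g_\varepsilon(u_\varepsilon)-(1+u_0)^{-2}\big)$ on $I$ with $v_\varepsilon(\cdot,\pm1)=0$ and $v_\varepsilon(0,\cdot)=0$. Since $\|a_\varepsilon(u_\varepsilon(t))-1\|_{C(\bar I)}\le C(\kappa_0)\varepsilon^2$, and, splitting through $g_\varepsilon(u_0)$ and invoking the $\varepsilon$-uniform Lipschitz continuity of $g_\varepsilon$ from Section~\ref{Sec2} together with the elliptic convergence above, $\|g_\varepsilon(u_\varepsilon(t))-(1+u_0(t))^{-2}\|_{L_q(I)}\le L(\kappa_0)\|v_\varepsilon(t)\|_{W_q^{2\vartheta}(I)}+C(\kappa_0)\varepsilon^\beta$ for suitable $\vartheta\in(0,1)$ and $\beta>0$, the variation-of-constants formula for the Dirichlet Laplacian on $L_q(I)$ and the parabolic smoothing estimates lead, through a singular Gronwall argument in the solution space (as in \cite{ELW_CPAM}, now in the weaker topologies dictated by the quasilinear term), to $\|u_\varepsilon-u_0\|_{C^{1-\theta}([0,\tau],W_q^{2\theta}(I))}\le C(\kappa_0,\lambda,\tau)\,\varepsilon^{\min\{2,\beta\}}$ for every $\theta\in(0,1)$; as $u_0$ solves \eqref{z0} and that solution is unique, this is precisely the claimed convergence and identifies the limit. (Equivalently, the uniform bounds make $\{u_\varepsilon\}$ precompact in the stated space and, by the same estimates, every cluster point solves \eqref{z0}, hence coincides with $u_0$.) The genuine difficulty throughout is the $\varepsilon$-uniform control of the electrostatic coupling: the convergence of the gradient trace of $\psi_\varepsilon$ on $\Gamma(u_\varepsilon)$ toward $(1+u_0)^{-2}$ and the Lipschitz estimates for $u\mapsto g_\varepsilon(u)$ with constants independent of $\varepsilon$ -- this is exactly where Section~\ref{Sec2} and the elliptic analysis of \cite{ELW_CPAM} are needed; a secondary point is to verify that all constants in the local existence scheme of Section~\ref{Sec3} are $\varepsilon$-uniform in spite of the $\varepsilon$-dependent quasilinear principal part, which holds because $a_\varepsilon(u)\to1$ and $\partial_t-a_\varepsilon(u)\partial_x^2$ stays uniformly parabolic for $\varepsilon\le1$ on the relevant bounded sets.
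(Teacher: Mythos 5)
\textbf{The overall strategy is the right one} (uniform bounds, $\varepsilon$-independent lower bound on the existence time, then pass to the limit by an energy estimate for the elliptic part and a Gronwall/variation-of-constants argument for the parabolic part), and the decomposition of the elliptic error via $w_{\varepsilon,v}=\psi_{\varepsilon,v}-\psi_{0,v}$ as well as the $v_\varepsilon=u_\varepsilon-u_0$ equation are indeed how the delegated convergence argument goes. But there is a genuine gap at the very place you yourself flag as "the genuine difficulty", and it is not merely a matter of "tracking constants".

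\textbf{The estimates of Section~\ref{Sec2} are not uniform in $\varepsilon$.} You assert that "the estimates of Section~\ref{Sec2} on $g_\varepsilon$ are uniform in $\varepsilon\in(0,1)$" and later invoke "the $\varepsilon$-uniform Lipschitz continuity of $g_\varepsilon$ from Section~\ref{Sec2}". This is false: every constant there is of the form $c_i(\kappa,\varepsilon)$, and these blow up as $\varepsilon\to 0$. Concretely, the coercivity constant $\nu(\kappa,\varepsilon)$ in the proof of Lemma~\ref{L1t} satisfies $\nu(\kappa,\varepsilon)\to 0$ as $\varepsilon\to 0$, because the operator $\mathcal{L}_v=\varepsilon^2\partial_x^2+\ldots$ degenerates in the $x$-direction, so the $H^1(\Omega)$-bound \eqref{2.3}, the resulting $\mathcal{A}(v)^{-1}$-bound of Lemma~\ref{L1b}, and consequently the Lipschitz constant $c_1(\kappa,\varepsilon)$ of Proposition~\ref{L1} all deteriorate as $\varepsilon\to 0$. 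This is precisely why Theorem~\ref{A} alone does not yield an $\varepsilon$-independent existence time, a point the paper makes explicit before Theorem~\ref{Bq}. Feeding "uniform parabolicity of $a_\varepsilon$" into the Section~\ref{Sec3} fixed-point scheme does not repair this, because the contraction estimate and the $S_q(\kappa)$-invariance both require a bound on $\|g_\varepsilon(u_\varepsilon)\|_{W_{2,D}^{2\sigma}(I)}$ with an $\varepsilon$-independent constant, and that is exactly what Section~\ref{Sec2} does not provide.

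\textbf{What is actually needed and how the paper obtains it.} The paper derives the uniform bound $\|g_\varepsilon(u_\varepsilon(t))\|_{W_{2,D}^{2\sigma}(I)}\le K_4$ from a separate family of $\varepsilon$-weighted elliptic estimates, Lemma~\ref{le:z1} (imported from the semilinear analysis in \cite{ELW_CPAM}): these control $\Phi_\varepsilon$, $\partial_\eta\Phi_\varepsilon$, $\partial_\eta^2\Phi_\varepsilon$, and the crucial trace $\partial_\eta\Phi_\varepsilon(\cdot,1)$ with explicit powers of $\varepsilon$ that compensate the degeneration of $\mathcal{L}_v$ in $x$. These bounds rely essentially on the sign condition $u_\varepsilon\le 0$ (i.e.\ on Theorem~\ref{A}(iii) and the hypothesis $u^0\le 0$); your proposal mentions the sign only in passing, whereas in the paper this is flagged as indispensable for Lemma~\ref{le:z1}. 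Once $K_4$ is available, the $\varepsilon$-independent $\tau$ (and the global case $\lambda<\Lambda(u^0)$) follows by inserting $K_4$ into the variation-of-constants estimates of Proposition~\ref{ES}, which is the content of Lemma~\ref{le.pam1}. For the final convergence the paper then refers to \cite{ELW_CPAM}, and the mechanism is the one you anticipate — but again with $\varepsilon$-independent constants coming from Lemma~\ref{le:z1}-type estimates, not from Section~\ref{Sec2}: in particular, to close your Gronwall argument you would need $g_\varepsilon(v)-(1+v)^{-2}=O(\varepsilon)$ uniformly on $\overline{S}_q(\kappa_1)\cap\{v\le 0\}$, which follows from \eqref{z5}, not from Proposition~\ref{L1}. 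So the missing ingredient in your write-up is the $\varepsilon$-weighted elliptic estimates themselves; without them, the uniform existence time, the uniform $W_q^2$-bound, and the final rate all remain unproved.
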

%%%%%%%%%%%%%%%%%%%%%%%%%%%%%%%%%%%%%%%%%%%%%%%%%%%%%%%%

The proof is given in Section~\ref{Sec6}. A similar result has been established in \cite[Thm.~2]{LaurencotWalker_ARMA} for the stationary problem and in \cite[Thm.1.4]{ELW_CPAM} for the semilinear parabolic version \eqref{UU}. As in the latter paper, the crucial step is to derive the $\ve$-independent lower bound $\tau>0$ on $T_m^\ve$, which is not guaranteed by the analysis leading to Theorem~\ref{A}. The proof of Theorem~\ref{Bq} uses several properties of \eqref{psi}-\eqref{bcpsi} with respect to the $\ve$-dependence shown in \cite{ELW_CPAM}.

%%%%%%%%%%%%%%%%%%%%%%%%%%%%%%%%%%%%%%%%%%%%%%%%%%%%%%%%
%%%%%%%%%%%%%%%%%%%%%%%%%%%%%%%%%%%%%%%%%%%%%%%%%%%%%%%%
\section{On the Elliptic Equation \eqref{psi}-\eqref{bcpsi} }\label{Sec2} 
%%%%%%%%%%%%%%%%%%%%%%%%%%%%%%%%%%%%%%%%%%%%%%%%%%%%%%%%
%%%%%%%%%%%%%%%%%%%%%%%%%%%%%%%%%%%%%%%%%%%%%%%%%%%%%%%%

We shall first derive properties of solutions to the elliptic equation \eqref{psi}-\eqref{bcpsi} in dependence of a given (free) boundary. To do so, we transform the free boundary problem \eqref{psi}-\eqref{bcpsi} to the fixed rectangle $\Omega:=I\times (0,1)$. More precisely, let $q\in (2,\infty)$ be fixed and consider an arbitrary function $v\in W_{q,D}^2(I)$ taking values in $(-1,\infty)$. We then define a diffeomorphism \mbox{$T_v:\overline{\Omega(v)}\rightarrow \overline{\Omega}$} by setting
\begin{equation}\label{Tu}
T_v(x,z):=\left(x,\frac{1+z}{1+v(x)}\right)\ ,\quad (x,z)\in \overline{\Omega(v)}\ ,
\end{equation}
with $\Omega(v) := \left\{ (x,z)\in I\times (-1,\infty)\ ;\-1 < z < v(x) \right\}$. Clearly, its inverse is
\begin{equation}\label{Tuu}
T_v^{-1}(x,\eta)=\big(x,(1+v(x))\eta-1\big)\ ,\quad (x,\eta)\in \overline{\Omega}\ ,
\end{equation}
and the Laplace operator from \eqref{psi} is transformed to the $v$-dependent differential operator 
\begin{equation*}
\begin{split}
\mathcal{L}_v w\, :=\, & \e^2\ \partial_x^2 w - 2\e^2\ \eta\ \frac{\partial_x v(x)}{1+v(x)}\ \partial_x\partial_\eta w
+ \frac{1+\e^2\eta^2(\partial_x v(x))^2}{(1+v(x))^2}\ \partial_\eta^2 w\\
& + \e^2\ \eta\ \left[ 2\ \left(\frac{\partial_x v(x)}{1+v(x)} \right)^2 - \frac{\partial_x^2 v(x)}{1+v(x)} \right]\ \partial_\eta w\ .
\end{split}
\end{equation*}
An alternative formulation of the boundary value problem  \eqref{psi}-\eqref{bcpsi} is then
\begin{eqnarray}
\big(\mathcal{L}_{u(t)}\phi\big) (t,x,\eta)\!\!\!&=0\ ,&(x,\eta)\in\Omega\ , \quad \ t>0\ ,\label{23}\\
\phi(t,x,\eta)\!\!\!&=\eta\ , &(x,\eta)\in \partial\Omega\ , \quad t>0\ ,\label{24}
\end{eqnarray}
for $\phi=\psi\circ T_{u(t)}^{-1}$.
With this notation, the quasilinear evolution equation \eqref{u} for $u$ becomes
\begin{align}
\partial_t u -\partial_x\left(\frac{\partial_x u}{\sqrt{1+\varepsilon^2(\partial_x u)^2}}\right) &= {-\lambda}\ \left[ \frac{1+\e^2(\partial_x u)^2}{(1+u)^2} \right]\ \vert\partial_\eta\phi(\cdot,1)\vert^2\ ,\quad x\in I\ ,\quad t>0\ , \label{33}
\end{align}
where we have used $\partial_x\phi(t,x,1)=0$ for $x\in I$ and $t>0$ due to $\phi(t,x,1)=1$ by \eqref{24}. The investigation of the dynamics of \eqref{33} involves the properties of its nonlinear right hand side as well as the properties of the quasilinear curvature term. We shall see that these two features of \eqref{33} are somewhat opposite as the treatment of the former requires a functional analytic setting in $W_q^2(I)$ to handle the second order terms of $\ml_{u(t)}$ in \eqref{23}, while a slightly weaker setting has to be chosen to guarantee H\"older continuity of $u$ with respect to time which is required in quasilinear evolution equations (see Remark~\ref{Remm} for further details). To account for these features of \eqref{33} we have to refine the Lipschitz property of
the right-hand side of \eqref{33} derived in \cite{ELW_CPAM} as stated in \eqref{gLip} below. \\

Defining for $\kappa\in (0,1)$ the open subset
$$
S_q(\kappa):=\left\{u\in W_{q,D}^2(I)\,;\, \|u\|_{W_{q,D}^2(I)}< 1/\kappa \;\;\text{ and }\;\; -1+\kappa< u(x) \text{ for } x\in I \right\}
$$
of $W_{q,D}^2(I)$ (defined in \eqref{wap}) with closure 
$$
\overline{S}_q(\kappa)=\left\{u\in W_{q,D}^2(I)\,;\, \|u\|_{W_{q,D}^2(I)}\le 1/\kappa \;\;\text{ and }\;\; -1+\kappa\le u(x) \text{ for } x\in I \right\}\ ,
$$ 
the crucial properties of the nonlinear right-hand side of \eqref{33} are collected in the following proposition:

%%%%%%%%%%%%%%%%%%%%%%%%%%%%%%%%%%%%%%%%%%%%%%%%%%%%%%%%
\begin{prop}\label{L1}
Let $q\in (2,\infty)$, $\kappa\in (0,1)$, and $\varepsilon>0$. For each $v\in \overline{S}_q(\kappa)$ there is a unique solution $\phi_v\in W_2^2(\Omega)$ to 
\begin{eqnarray}
\big(\mathcal{L}_v \phi_v\big) (x,\eta)\!\!\!&=0\ ,&(x,\eta)\in\Omega\ ,\label{230}\\
\phi_v(x,\eta)\!\!\!&=\eta\ , &(x,\eta)\in \partial\Omega\ .\label{240}
\end{eqnarray}
If $\tilde{v}$ is defined by $\tilde{v}(x):=v(-x)$ for $x\in I$, then $\phi_{\tilde{v}}(x,\eta)=\phi_v(-x,\eta)$ for $(x,\eta)\in\Omega$. 
Moreover, for $2\sigma\in [0,1/2)$, the mapping
$$
g_\ve: S_q(\kappa)\longrightarrow W_{2,D}^{2\sigma}(I)\ ,\quad v\longmapsto \frac{1+\e^2(\partial_x v)^2}{(1+v)^2}\  \vert\partial_\eta\phi_v(\cdot,1)\vert^2
$$
is analytic and bounded with $g_\ve(0)=1$. Finally,  {  if $\xi\in [0,1/2)$ and $\nu\in [0,(1-2\xi)/2)$, then}  there exists a constant $c_1(\kappa,\ve)>0$ such that
\bqn\label{gLip}
\|g_\ve(v)-g_\ve(w)\|_{W_{2,D}^\nu(I)}\le c_1(\kappa,\ve)\,\|v-w\|_{W_{q,D}^{2-\xi}(I)}\ ,\quad v,w\in \overline{S}_q(\kappa)\ . 
\eqn
\end{prop}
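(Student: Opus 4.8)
\textbf{Proof strategy for Proposition~\ref{L1}.}
The plan is to transfer everything to the fixed rectangle $\Omega$ and treat $\ml_v$ as an elliptic operator whose coefficients depend smoothly on $v$. First I would verify that for $v\in\overline{S}_q(\kappa)$ the operator $\ml_v$ is uniformly elliptic on $\Omega$: using the embedding $W_{q,D}^2(I)\hookrightarrow C^1(\bar I)$ valid for $q>2$, the constraints $\|v\|_{W_{q,D}^2}\le 1/\kappa$ and $v\ge -1+\kappa$ give $\kappa$-dependent bounds on $1+v$ from below and on $\partial_x v$ in $L_\infty$, hence uniform bounds on the principal coefficients together with a uniform ellipticity constant. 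Since the coefficients of $\ml_v$ involve $\partial_x^2 v\in L_q(I)$ in the lowest-order term only, the data are $L_q$ and unique solvability of \eqref{230}--\eqref{240} in $W_2^2(\Omega)$ follows by subtracting the affine lift $(x,\eta)\mapsto\eta$ (which makes the boundary data homogeneous and produces an $L_2(\Omega)$ right-hand side coming from $\ml_v\eta$, whose only non-smooth piece is the bounded multiple of $\partial_x^2 v$) and applying standard $L_2$ elliptic theory (Lax--Milgram for existence/uniqueness in $H^1_0$, then elliptic regularity up to the boundary; the corners of the rectangle are convex so $W_2^2$-regularity is not obstructed). The symmetry statement $\phi_{\tilde v}(x,\eta)=\phi_v(-x,\eta)$ is immediate from uniqueness once one checks that $x\mapsto -x$ maps $\ml_v$ to $\ml_{\tilde v}$ and preserves the boundary condition.

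Next I would establish analyticity and boundedness of $g_\ve$. The key observation is that the map $v\mapsto\ml_v$, viewed as a map from $S_q(\kappa)$ into the space of bounded linear operators $\mathcal L(W_2^2(\Omega)\cap\{{}=0\text{ on }\partial\Omega\},L_2(\Omega))$, is analytic: each coefficient is a rational function of $v$ and $\partial_x v$ (with non-vanishing denominator $1+v$ on $S_q(\kappa)$) and a linear function of $\partial_x^2 v$, and the pointwise multiplication and composition maps involved are analytic on the relevant Sobolev spaces because $W_{q,D}^2(I)$ is a Banach algebra that embeds in $C^1(\bar I)$ — here one uses $q>2$ crucially. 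Since $v\mapsto\ml_v$ is analytic with values in isomorphisms (by the uniform solvability just proved), the solution operator $v\mapsto\phi_v$ is analytic from $S_q(\kappa)$ to $W_2^2(\Omega)$ by inversion of analytic operator-valued maps. Composing with the trace $\phi\mapsto\partial_\eta\phi(\cdot,1)$, which is bounded from $W_2^2(\Omega)$ to $W_2^{1/2}(I)\hookrightarrow W_{2,D}^{2\sigma}(I)$ for $2\sigma<1/2$, and then with the analytic (on $S_q(\kappa)$) maps $v\mapsto (1+\ve^2(\partial_x v)^2)/(1+v)^2$ and $h\mapsto h^2$ on $W_{2,D}^{2\sigma}(I)$ (again using that this space is a multiplication algebra for $2\sigma>1/q$, which one should arrange, or argue directly with the product estimate $W_2^{2\sigma}\cdot W_2^{1/2}\hookrightarrow W_2^{2\sigma}$), yields analyticity of $g_\ve$. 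Boundedness on $S_q(\kappa)$ and the normalization $g_\ve(0)=1$ follow from the $\kappa$-uniform estimates and the explicit solution $\phi_0(x,\eta)=\eta$ when $v=0$.

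The main obstacle — and the genuinely new point compared with \cite{ELW_CPAM} — is the Lipschitz estimate \eqref{gLip} in the \emph{weaker} norms $W_{2,D}^\nu(I)$ on the left and $W_{q,D}^{2-\xi}(I)$ on the right. The difficulty is that the difference $\phi_v-\phi_w$ solves an elliptic problem whose right-hand side is $(\ml_w-\ml_v)\phi_v$, and $\ml_w-\ml_v$ contains the term $(\partial_x^2 w-\partial_x^2 v)/(1+\cdot)$ times $\partial_\eta\phi_v$, i.e.\ a factor with only $L_q$-regularity in the $x$-variable and a $W_2^1(\Omega)$-factor. To close the estimate I would run a \emph{duality / interpolation} argument: rather than estimating $\phi_v-\phi_w$ in $W_2^2(\Omega)$ (which would force the full $W_q^2$-norm of $v-w$ on the right), I estimate it in a space of lower regularity. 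Concretely, I would first obtain the crude bound $\|\phi_v-\phi_w\|_{W_2^2(\Omega)}\le C(\kappa,\ve)\|v-w\|_{W_q^2(I)}$ from the elliptic estimate, then a bound $\|\phi_v-\phi_w\|_{W_2^1(\Omega)}\le C(\kappa,\ve)\|v-w\|_{L_q(I)}$ by testing the equation for the difference against $\phi_v-\phi_w$ itself and integrating by parts to move one derivative off the bad coefficient (so that only $\partial_x w-\partial_x v$, not $\partial_x^2 w-\partial_x^2 v$, appears), estimating the resulting terms with Cauchy--Schwarz and absorbing; here $\|\partial_x v-\partial_x w\|_{L_\infty}\lesssim\|v-w\|_{W_q^{2-\xi}}$ for $\xi$ small uses $W_q^{2-\xi}(I)\hookrightarrow C^1(\bar I)$. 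Interpolating these two bounds gives $\|\phi_v-\phi_w\|_{W_2^{2-\xi}(\Omega)}\lesssim\|v-w\|_{W_q^{2-\xi}(I)}$ (after also interpolating the $W_q$-norms of $v-w$, or more simply by choosing the interpolation parameter to match $2-\xi$). Taking the gradient trace on $\{\eta=1\}$, which maps $W_2^{2-\xi}(\Omega)$ into $W_2^{(3-2\xi)/2-1}(I)=W_2^{(1-2\xi)/2}(I)$, and using that $W_2^{(1-2\xi)/2}(I)\hookrightarrow W_{2,D}^\nu(I)$ for every $\nu<(1-2\xi)/2$, together with the algebra/product estimates to pass from $\partial_\eta\phi_v(\cdot,1)$ to $g_\ve(v)$ — noting that the prefactor $(1+\ve^2(\partial_x v)^2)/(1+v)^2$ is Lipschitz from $W_q^{2-\xi}(I)$ (hence from $\overline S_q(\kappa)$) into $C^1(\bar I)$ and that $|a|^2-|b|^2=(a-b)(a+b)$ — gives \eqref{gLip}. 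One should double-check that the endpoint restriction $\nu<(1-2\xi)/2$ is exactly what the trace theorem allows, which it is, and that all constants depend only on $\kappa$ and $\ve$, which they do since every embedding and elliptic constant invoked is uniform over $\overline S_q(\kappa)$.
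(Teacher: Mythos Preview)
Your treatment of existence and uniqueness in $W_2^2(\Omega)$, the symmetry statement, and the analyticity and boundedness of $g_\ve$ is essentially the paper's approach; the paper in fact defers all of these parts to \cite[Prop.~2.1]{ELW_CPAM} and concentrates exclusively on \eqref{gLip}. The genuine issue is your argument for \eqref{gLip}: the interpolation step does not close.

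Two problems. First, the claimed bound $\|\phi_v-\phi_w\|_{W_2^1(\Omega)}\le C\|v-w\|_{L_q(I)}$ is not what your own argument produces: after integrating by parts the bad term $\int\eta\,\partial_x^2(v-w)(\cdots)(\phi_v-\phi_w)$ you are left with $\partial_x(v-w)$, and controlling this in $L_\infty$ already costs $\|v-w\|_{W_q^{2-\xi}}$ (as you yourself note), not $\|v-w\|_{L_q}$. Second --- and this is the decisive gap --- even granting two bounds $\|\Phi\|_{W_2^1}\le C\|v-w\|_{X_1}$ and $\|\Phi\|_{W_2^2}\le C\|v-w\|_{X_2}$ for $\Phi:=\phi_v-\phi_w$, interpolating the \emph{fixed element} $\Phi$ gives only
\[
\|\Phi\|_{W_2^{2-\theta}}\;\le\; C\,\|\Phi\|_{W_2^2}^{1-\theta}\,\|\Phi\|_{W_2^1}^{\theta}\;\le\; C\,\|v-w\|_{X_2}^{1-\theta}\,\|v-w\|_{X_1}^{\theta}\,,
\]
and you would then need $\|v-w\|_{X_2}^{1-\theta}\|v-w\|_{X_1}^{\theta}\le C\|v-w\|_{W_q^{2-\xi}}$. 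But the interpolation inequality for $v-w$ runs the \emph{other} way: the intermediate norm is bounded \emph{above} by the product of the endpoint norms, never below. With $X_2=W_q^2(I)$ the factor $\|v-w\|_{W_q^2}^{1-\theta}$ is simply not dominated by $\|v-w\|_{W_q^{2-\xi}}$. If instead you replace $\|\Phi\|_{W_2^2}$ by the uniform bound $C(\kappa,\ve)$ available on $\overline S_q(\kappa)$, you get only a H\"older estimate $\|\Phi\|_{W_2^{2-\theta}}\le C\|v-w\|_{W_q^{2-\xi}}^{\theta}$, strictly weaker than Lipschitz. Since one needs $\theta$ small (close to $\xi$) for the trace $\partial_\eta\Phi(\cdot,1)$ to land in a positive-order space, the H\"older exponent is small and \eqref{gLip} does not follow.

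The paper circumvents this by a \emph{direct duality estimate} of the right-hand side in a negative Sobolev space, not by interpolating two nonlinear bounds. Writing $\Phi_v:=\phi_v-\eta=\mathcal A(v)^{-1}f_v$ with $\mathcal A(v):=-\ml_v$ and $f_v:=\ml_v\eta$, one first interpolates the \emph{linear} solution operator to obtain $\mathcal A(v)^{-1}\in\ml(W_{2,D}^{\theta-1}(\Omega),W_{2,D}^{\theta+1}(\Omega))$ uniformly for $v\in\overline S_q(\kappa)$. The key step is then to show, for $\alpha\in(\xi,1)$,
\[
\|(\mathcal A(v)-\mathcal A(w))\Psi\|_{W_{2,D}^{-\alpha}(\Omega)}+\|f_v-f_w\|_{W_{2,D}^{-\alpha}(\Omega)}
\;\le\; c(\kappa,\ve)\,\|v-w\|_{W_q^{2-\xi}(I)}\,\|\Psi\|_{W_{2,D}^2(\Omega)}\,.
\]
Testing against $\varphi\in W_{2,D}^\alpha(\Omega)$, the dangerous contribution $\int_\Omega\eta\,\partial_x^2(v-w)\,(1+w)^{-1}\partial_\eta\Psi\,\varphi\,\rd(x,\eta)$ is estimated via the duality $W_q^{-\xi}(I)\times W_{q'}^{\xi}(I)$ as
\[
\|\partial_x^2(v-w)\|_{W_q^{-\xi}(I)}\;\Bigl\|\tfrac{1}{1+w}\int_0^1\eta\,\partial_\eta\Psi(\cdot,\eta)\,\varphi(\cdot,\eta)\,\rd\eta\Bigr\|_{W_{q'}^{\xi}(I)}\,,
\]
and the second factor is controlled by $c(\kappa)\|\Psi\|_{W_{2,D}^2(\Omega)}\|\varphi\|_{W_{2,D}^\alpha(\Omega)}$ through pointwise-multiplication theorems (e.g.\ $W_q^2(I)\cdot W_2^\beta(I)\hookrightarrow W_{q'}^{\xi}(I)$ for suitable $\beta\in[\xi,\alpha)$ together with $W_2^1(\Omega)\cdot W_2^\alpha(\Omega)\hookrightarrow W_2^\beta(\Omega)$). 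The resolvent identity then yields $\|\phi_v-\phi_w\|_{W_{2,D}^{2-\alpha}(\Omega)}\le c(\kappa,\ve)\|v-w\|_{W_q^{2-\xi}(I)}$, a genuine Lipschitz bound, after which trace and product estimates give \eqref{gLip}. The mechanism is that the loss of $\xi$ derivatives on $v-w$ is traded, \emph{via duality on the right-hand side}, for a loss of $\alpha>\xi$ derivatives on $\Phi$; this trade is linear and exact, whereas your interpolation of nonlinear estimates only produces H\"older continuity.
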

%%%%%%%%%%%%%%%%%%%%%%%%%%%%%%%%%%%%%%%%%%%%%%%%%%%%%%%%

According to \cite[Prop.~2.1]{ELW_CPAM} we actually only have to prove \eqref{gLip}. Notice that this global Lipschitz property is in the weaker topology of $W_{q,D}^{2-\xi}(I)$ instead of $W_{q,D}^{2}(I)$ and improves \cite[Prop.~2.1]{ELW_CPAM} where it was established for $\xi=0$. The property \eqref{gLip} will be a consequence of a sequence of lemmas. For the remainder of this section we fix $\ve>0$, $\kappa\in (0,1)$, and $q\in (2,\infty)$.

In the following, if $\alpha>1/2$ we let $W_{2,D}^\alpha(\Omega)$ denote the subspace of elements in $W_{2}^\alpha(\Omega)$ whose boundary trace is zero, and if $0\le \alpha<1/2$ we set $W_{2,D}^\alpha(\Omega):=W_{2}^\alpha(\Omega)$. We equip $W_{2,D}^1(\Omega)$ with the norm 
$$
\|\Phi\|_{W_{2,D}^1(\Omega)}:=\left(\|\partial_x\Phi\|_{L_2(\Omega)}^2 +\|\partial_\eta\Phi\|_{L_2(\Omega)}^2\right)^{1/2}\ ,
$$
and introduce the notation $$
W_{2,D}^{-\theta}(\Omega):= (W_{2,D}^{\theta}(\Omega))'\ ,\quad 0\le \theta\le 1  \ .
$$

%%%%%%%%%%%%%%%%%%%%%%%%%%%%%%%%%%%%%%%%%%%%%%%%%%%%%%%%
\begin{lem}\label{L1t}
For each $v\in \overline{S}_q(\kappa)$ and $F\in W_{2,D}^{-1}(\Omega)$ there is a unique solution $\Phi\in W_{2,D}^1(\Omega)$ to the boundary value problem
\begin{eqnarray}
-\mathcal{L}_v \Phi  \!\!\!&=F&\text{in } \Omega\ ,\label{230a}\\
\Phi\!\!\!&=0 &\text{on } \partial\Omega\ ,\label{240a}
\end{eqnarray}
and there is a constant  $c_2(\kappa,\ve)>0$ such that
\bqn\label{2.3}
\|\Phi\|_{W_{2,D}^{1}(\Omega)}\le c_2(\kappa,\ve)\, \|F\|_{W_{2,D}^{-1}(\Omega)}\ .
\eqn
Furthermore, if $F\in L_2(\Omega)$, then $\Phi\in W_{2,D}^2(\Omega)$ and 
\bqn\label{2.4}
\|\Phi\|_{W_{2,D}^{2}(\Omega)}\le c_2(\kappa,\ve)\, \|F\|_{L_2(\Omega)}\ .
\eqn
\end{lem}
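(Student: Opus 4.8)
The plan is to establish Lemma~\ref{L1t} by the standard variational (Lax--Milgram) approach, exploiting the fact that $\ml_v$ is uniformly elliptic on $\Omega$ with ellipticity constants depending only on $\kappa$ and $\ve$, together with elliptic regularity to upgrade the weak $W_{2,D}^1$-solution to $W_{2,D}^2$ when $F\in L_2(\Omega)$. First I would bring $\ml_v$ into divergence form: writing $\ml_v w = \mathrm{div}(A_v \nabla w) + b_v\cdot\nabla w$ for a symmetric matrix $A_v = \bigl(\begin{smallmatrix} \e^2 & -\e^2\eta\,\partial_x v/(1+v)\\ -\e^2\eta\,\partial_x v/(1+v) & (1+\e^2\eta^2(\partial_x v)^2)/(1+v)^2\end{smallmatrix}\bigr)$ (one checks that the lower-order $\partial_\eta w$ term in $\ml_v$ is exactly what is produced when the $\partial_x$-derivative falls on the off-diagonal entry of $A_v$, so there is \emph{no} genuine first-order remainder $b_v$, which is a pleasant simplification). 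Since $v\in\overline{S}_q(\kappa)$ and $q>2$ gives $W_{q,D}^2(I)\hookrightarrow C^1(\bar I)$, we have $\|v\|_{C^1}\le C(\kappa)$ and $1+v\ge\kappa$, so the entries of $A_v$ are bounded and $A_v$ satisfies $A_v\xi\cdot\xi\ge c(\kappa,\e)|\xi|^2$ uniformly: indeed $\det A_v = \e^2/(1+v)^2 \ge \e^2\kappa^2\cdot(\text{something})$ after a short computation, and the trace is bounded, giving a lower eigenvalue bound $\gtrsim \e^2\kappa^2/(1+\|v\|_{C^1}^2)$.

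With coercivity in hand, the bilinear form $a_v(\Phi,\Psi):=\int_\Omega A_v\nabla\Phi\cdot\nabla\Psi\,\rd(x,\eta)$ is bounded and coercive on $W_{2,D}^1(\Omega)=H^1_0(\Omega)$ (using Poincar\'e on the rectangle $\Omega$), so for $F\in W_{2,D}^{-1}(\Omega)=H^{-1}(\Omega)$ the Lax--Milgram theorem yields a unique $\Phi\in W_{2,D}^1(\Omega)$ with $a_v(\Phi,\Psi)=\langle F,\Psi\rangle$ for all $\Psi$, i.e. a weak solution of \eqref{230a}--\eqref{240a}, and the coercivity constant $c(\kappa,\e)$ immediately gives $\|\Phi\|_{W_{2,D}^1(\Omega)}\le c_2(\kappa,\e)\|F\|_{W_{2,D}^{-1}(\Omega)}$, proving \eqref{2.3}. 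For \eqref{2.4}, when $F\in L_2(\Omega)$ I would invoke $H^2$-regularity for the uniformly elliptic operator $\ml_v$ on the bounded rectangle $\Omega$ with zero Dirichlet data. The rectangle is convex (so $H^2$-regularity for the Dirichlet Laplacian holds up to the boundary), and since the top and bottom edges are $C^\infty$ one gets interior-plus-flat-boundary $W_2^2$ estimates there; at the four corners one uses that $\Omega$ is convex and that near each corner the two sides meet at a right angle, which is precisely the borderline case in which $H^2$-regularity for second-order elliptic Dirichlet problems still holds. The resulting estimate $\|\Phi\|_{W_2^2(\Omega)}\le C\bigl(\|\ml_v\Phi\|_{L_2(\Omega)}+\|\Phi\|_{L_2(\Omega)}\bigr)$ with $C=C(\kappa,\e)$, combined with \eqref{2.3} (using $W_{2,D}^{-1}\supset L_2$ with controlled embedding constant on the bounded domain $\Omega$) and absorbing the lower-order term, yields \eqref{2.4} after relabelling the constant.

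The main obstacle is keeping the dependence of the regularity constant $C$ on the coefficients of $\ml_v$ under control uniformly in $v\in\overline{S}_q(\kappa)$, and in particular making sure the $W_2^2$-estimate is not merely qualitative. The cleanest route is a difference-quotient (Nirenberg) argument in the $x$-direction, where the domain is translation-invariant: differentiating the equation tangentially one gains $\partial_x\Phi\in W_{2,D}^1$ with a bound depending only on $\|A_v\|_{C^{0,1}}$ (which is controlled by $\|v\|_{W_{q,D}^2(I)}\le 1/\kappa$ since $\partial_x A_v$ involves $\partial_x^2 v\in L_q(I)\hookrightarrow L_2(I)$, but note the $x$-difference quotients only need $A_v\in C^{0,1}_x$, which holds because $\partial_x A_v$ is bounded in $L_\infty$ by $C(\kappa,\e)$ — here one must be a little careful, as $\partial_x^2 v$ is only in $L_q$, not $L_\infty$; however the troublesome term $\e^2\eta\,\partial_x^2 v/(1+v)$ sits in the lower-order coefficient, not in $A_v$, so for the tangential difference quotient one treats it as an $L_q(I;L_\infty)\subset L_2(\Omega)$ forcing term, which is harmless). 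Then the equation is solved for $\partial_\eta^2\Phi$ algebraically: since the coefficient $(1+\e^2\eta^2(\partial_x v)^2)/(1+v)^2$ of $\partial_\eta^2\Phi$ is bounded below by $\kappa^2/(1+\e^2\|v\|_{C^1}^2)>0$, one reads off $\partial_\eta^2\Phi = (1+v)^2(1+\e^2\eta^2(\partial_x v)^2)^{-1}\bigl(-F + \e^2\partial_x^2\Phi - 2\e^2\eta\tfrac{\partial_x v}{1+v}\partial_x\partial_\eta\Phi + (\text{l.o.t.})\bigr)\in L_2(\Omega)$ with the desired bound. This tangential-difference-quotient plus algebraic-solve scheme produces \eqref{2.4} with $c_2(\kappa,\e)$ explicit in the ellipticity and coefficient bounds, completing the proof; the bookkeeping of which coefficient is bounded in $L_\infty$ versus only $L_q$ is the one place requiring genuine care.
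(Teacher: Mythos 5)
Your key structural claim fails: $\mathcal{L}_v$ is \emph{not} in pure divergence form with the matrix $A_v$ you wrote down. Computing $\mathrm{div}(A_v\nabla w)$ for your $A_v$, the first-order terms generated are
\begin{equation*}
(\partial_\eta A_{21})\,\partial_x w + (\partial_x A_{12} + \partial_\eta A_{22})\,\partial_\eta w
= -\e^2\frac{\partial_x v}{1+v}\,\partial_x w + \e^2\eta\left(3\left(\frac{\partial_x v}{1+v}\right)^2 - \frac{\partial_x^2 v}{1+v}\right)\partial_\eta w\ ,
\end{equation*}
whereas $\mathcal{L}_v$ has no $\partial_x w$ term at all and the $\partial_\eta w$ coefficient $\e^2\eta\bigl(2(\partial_x v/(1+v))^2-\partial_x^2 v/(1+v)\bigr)$. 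The mismatch is a genuine drift:
\begin{equation*}
\mathcal{L}_v w = \mathrm{div}(A_v\nabla w) + \e^2\,\frac{\partial_x v}{1+v}\left(\partial_x w - \eta\,\frac{\partial_x v}{1+v}\,\partial_\eta w\right)\ ,
\end{equation*}
and this drift cannot be absorbed into the symmetric bilinear form $a_v(\Phi,\Psi)=\int A_v\nabla\Phi\cdot\nabla\Psi$. You overlooked that the $\partial_\eta$-derivative also hits $A_{21}$ and $A_{22}$, not only the $\partial_x$-derivative hitting $A_{12}$.

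Because of this drift, the straight Lax--Milgram route you propose does not go through: the full bilinear form of $-\mathcal{L}_v$ contains the non-symmetric piece $-\e^2\int \frac{\partial_x v}{1+v}\bigl(\partial_x\Phi-\eta\frac{\partial_x v}{1+v}\partial_\eta\Phi\bigr)\Psi$, and when $\Psi=\Phi$ this contributes a term of size $\|b_v\|_\infty\|\nabla\Phi\|_{L_2}\|\Phi\|_{L_2}$ that is \emph{not} small relative to the ellipticity constant uniformly in $v\in\overline{S}_q(\kappa)$; coercivity is therefore not automatic. This is precisely why the paper proceeds differently: it invokes Gilbarg--Trudinger Thm.~8.3 (which covers operators with a drift) for existence and uniqueness, derives the a priori bound $\|\Phi\|_{W^1_{2,D}}\le C(\kappa,\e)\bigl(\|\Phi\|_{L_2}+\|F\|_{W_{2,D}^{-1}}\bigr)$ by testing with $\Phi$ and using the uniform ellipticity inequality (2.7), and then removes the $\|\Phi\|_{L_2}$-term by a compactness--contradiction argument in the style of Gilbarg--Trudinger Lem.~9.17, so that the constant in (2.3) is uniform over $\overline{S}_q(\kappa)$. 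Your ellipticity estimate ($\det A_v=\e^2/(1+v)^2$ bounded below, hence a uniform lower eigenvalue bound) is correct and is essentially (2.7) in the paper, and the $W^2_2$ regularity discussion (convex rectangle, tangential difference quotients, algebraically solving for $\partial_\eta^2\Phi$) is a reasonable sketch of what is hidden in the citation to Lem.~6.2 of the companion paper; but the $W^1_2$ existence/uniqueness step as you have written it does not hold without accounting for the drift.
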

%%%%%%%%%%%%%%%%%%%%%%%%%%%%%%%%%%%%%%%%%%%%%%%%%%%%%%%%

\begin{proof}[{\bf Proof}]
According to \cite[Def.~1.3.2.3, Eq. (1,3,2,3)]{Grisvard}, we may write any $F\in W_{2,D}^{-1}(\Omega)$ in the form \mbox{$F=f_0+\partial_x f_1+\partial_\eta f_2$} with $(f_0,f_1,f_2)\in L_2(\Omega)^3$. Consequently, \cite[Thm.~8.3]{GilbargTrudinger} ensures that the boundary value problem \eqref{230a}-\eqref{240a} has a unique solution $\Phi\in W_{2,D}^1(\Omega)$. Furthermore, taking $\Phi$ as a test function in the weak formulation of \eqref{230a}-\eqref{240a} gives { 
\begin{equation*}
\begin{split}
\langle F,\Phi\rangle =& \int_\Omega \left[ \ve^2(\partial_x\Phi)^2 - 2\,\ve^2\,\eta\,\frac{\partial_x v}{1+v}\, \partial_x\Phi\,\partial_\eta \Phi - \ve^2\, \eta\, \partial_x\left( \frac{\partial_x v}{1+v} \right)\, \Phi\, \partial_\eta\Phi - \ve^2\, \frac{\partial_x v}{1+v}\, \Phi\, \partial_x\Phi \right] \rd(x,\eta) \\
&+ \int_\Omega \left[ \frac{1+\ve^2\eta^2(\partial_xv)^2}{(1+v)^2}\, (\partial_\eta \Phi)^2 + 2\, \ve^2\, \eta\, \left( \frac{\partial_x v}{1+v} \right)^2\, \Phi\, \partial_\eta\Phi \right]\ \rd (x,\eta)\\
&+\int_\Omega \ve^2\, \eta\, \left[ \partial_x \left( \frac{\partial_x v}{1+v} \right) - \left( \frac{\partial_x v}{1+v} \right)^2 \right]\,\Phi\, \partial_\eta \Phi\ \rd (x,\eta)\\
=&\int_\Omega\left[\ve^2(\partial_x\Phi)^2-2\,\ve^2\,\eta\,\frac{\partial_x v}{1+v}\, \partial_x\Phi\,\partial_\eta \Phi +\frac{1+\ve^2\eta^2(\partial_xv)^2}{(1+v)^2}\, (\partial_\eta \Phi)^2\right]\ \rd (x,\eta)\\
&-\int_\Omega \ve^2\, \frac{\partial_x v}{1+v}\,\left[\partial_x\Phi-\eta\,\frac{\partial_x v}{1+v}\,\partial_\eta\Phi \right]\, \Phi\ \rd (x,\eta)
\end{split}
\end{equation*}
and} thus
\begin{equation}
\begin{split}\label{2.5}
\int_\Omega &\left[\ve^2\left(\partial_x\Phi\,-\,\eta\, \frac{\partial_x v}{1+v}\,\partial_\eta\Phi\right)^2\, +\,\left(\frac{\partial_\eta\Phi}{1+v}\right)^2\right]\, \rd (x,\eta)\\
& \le \ve^2 \left\|\frac{\partial_x v}{1+v}\right\|_{L_\infty(I)}\, \left\|\partial_x\Phi-\eta\,\frac{\partial_x v}{1+v}\,\partial_\eta\Phi\right\|_{L_2(\Omega)}\, \|\Phi\|_{L_2(\Omega)}\, +\, \|F\|_{W_{2,D}^{-1}(\Omega)}\, \|\Phi\|_{W_{2,D}^1(\Omega)}\ .
\end{split}
\end{equation}
Note then that by definition of $\overline{S}_q(\kappa)$ and Sobolev's embedding,
there is $c_3>0$ such that
\bqn\label{2.6}
1+v(x)\ge \kappa\ ,\quad x\in I\ ,\qquad \|v\|_{C^1([-1,1])}\le \frac{c_3}{\kappa}
\eqn
for all $v\in \overline{S}_q(\kappa)$. Also, if $\zeta=(\zeta_1,\zeta_2)\in\R^2$, Young's inequality ensures that, for $(x,\eta)\in \Omega$,
\begin{equation*}
\begin{split}
\ve^2\,\zeta_1^2\,&\le 2\,\ve^2\,\left(\zeta_1-\eta\,\frac{\partial_x v(x)}{1+v(x)}\,\zeta_2\right)^2+2\,\ve^2\eta^2\, \left(\frac{\partial_x v(x)}{1+v(x)}\right)^2\,\zeta_2^2\\
&\le
2\,\left(1+2\,\ve^2\,\|\partial_x v\|_\infty^2\right)\,\left(\ve^2\,\left(\zeta_1-\eta\,\frac{\partial_x v(x)}{1+v(x)}\,\zeta_2\right)^2+\frac{1}{2} \left(\frac{\zeta_2}{1+v(x)}\right)^2\right)\ .
\end{split}
\end{equation*}
Therefore, introducing
$$
\nu(\kappa,\ve):=\frac{1}{2}\,\min\left\{\frac{\ve^2\, \kappa^2}{\kappa^2 +2\ve^2 c_3^2}\,,\,\frac{\kappa^2}{(\kappa + c_3)^2}\right\}\ ,
$$
we infer from \eqref{2.6} that
\bqn\label{2.7}
\nu(\kappa,\ve) \big(\zeta_1^2+\zeta_2^2\big)\le \ve^2\left(\zeta_1-\eta\,\frac{\partial_x v(x)}{1+v(x)}\,\zeta_2\right)^2+ \left(\frac{\zeta_2}{1+v(x)}\right)^2\ .
\eqn
Consequently, \eqref{2.5}, \eqref{2.6},  and \eqref{2.7} give
\begin{equation*}
\begin{split}
&\ve^2\, \left\|\partial_x\Phi\,-\,\eta\, \frac{\partial_x v}{1+v}\,\partial_\eta\Phi\right\|_{L_2(\Omega)}^2\, +\,\left\|\frac{\partial_\eta\Phi}{1+v}\right\|_{L_2(\Omega)}^2\\
&\qquad\qquad\le \ve^2\frac{c_3}{\kappa^2} \left\|\partial_x\Phi\,-\,\eta\, \frac{\partial_x v}{1+v}\,\partial_\eta\Phi\right\|_{L_2(\Omega)}\| \Phi\|_{L_2(\Omega)}\\
&\quad \qquad\qquad + \frac{\|F\|_{W_{2,D}^{-1}(\Omega)}}{\sqrt{\nu(\kappa,\ve)}}\,\left(\ve^2\, \left\|\partial_x\Phi\,-\,\eta\, \frac{\partial_x v}{1+v}\,\partial_\eta\Phi\right\|_{L_2(\Omega)}^2+\left\|\frac{\partial_\eta \Phi}{1+v}\right\|_{L_2(\Omega)}^2\right)^{1/2}\ ,
\end{split}
\end{equation*}
whence, using again \eqref{2.7},
\bqn\label{2.8}
\|\Phi\|_{W_{2,D}^{1}(\Omega)}\,\le\, \frac{\ve}{\sqrt{\nu(\kappa,\ve)}}\,\frac{c_3}{\kappa^2}\,\|\Phi\|_{L_2(\Omega)}+\frac{\|F\|_{W_{2,D}^{-1}(\Omega)}}{\nu(\kappa,\ve)}\ .
\eqn
We now proceed as in \cite[Lem.~9.17]{GilbargTrudinger} and argue by contradiction to show \eqref{2.3}  (see also \cite[Lem.~6.2]{ELW_CPAM} for a similar argument in a slightly different functional setting). The last statement of Lemma~2.2 is proved in \cite[Lem.~6.2]{ELW_CPAM}.
\end{proof}

Now, introducing
\begin{equation}\label{f}
f_v(x,\eta):=\mathcal{L}_v\eta =\e^2\eta\left[2\left(\frac{\partial_x v(x)}{1+v(x)}\right)^2-\frac{\partial_x^2 v(x)}{1+v(x)}\right]\ ,\quad (x,\eta)\in\Omega\ ,
\end{equation}
for $v\in\overline{S}_q(\kappa)$ given, 
we readily deduce that $f_v\in L_2(\Omega)$ with
\begin{equation}
\|f_v\|_{L_2(\Omega)} \le c_4(\kappa,\ve)\ . \label{spip}
\end{equation}
Consequently, Lemma~\ref{L1t} provides a unique solution $\Phi_v\in W_{2,D}^2(\Omega)$ to
\begin{eqnarray*}
-\big(\mathcal{L}_v \Phi_v\big)  (x,\eta)\!\!\!&=f_v\ ,&(x,\eta)\in\Omega\ ,\nonumber\\
\Phi_v(x,\eta)\!\!\!&=0\ , &(x,\eta)\in \partial\Omega\ . 
\end{eqnarray*}
Clearly, defining 
\begin{equation}
\phi_v(x,\eta) := \Phi_v(x,\eta)+\eta\ , \qquad (x,\eta)\in\bar{\Omega}\ , \label{phiv}
\end{equation} 
gives then the unique solution $\phi_v\in W_2^2(\Omega)$ to \eqref{230}-\eqref{240}. To prove a Lipschitz dependence of $\phi_v$ on $v\in\overline{S}_q(\kappa)$, we introduce  a bounded linear operator $$\mathcal{A}(v)\in\ml\big(W_{2,D}^1(\Omega),W_{2,D}^{-1}(\Omega)\big)\cap \ml\big(W_{2,D}^2(\Omega),L_2(\Omega)\big)
$$
by setting
$$
\mathcal{A}(v)\Phi:=-\ml_v \Phi\ ,\quad \Phi\in W_{2,D}^1(\Omega)\ .
$$
Note that $\mathcal{A}(v)$ is  invertible according to Lemma~\ref{L1t} and that $\Phi_v=\mathcal{A}(v)^{-1} f_v$. For the inverse $\mathcal{A}(v)^{-1}$ we have:

%%%%%%%%%%%%%%%%%%%%%%%%%%%%%%%%%%%%%%%%%%%%%%%%%%%%%%%%
\begin{lem}\label{L1b}
Given $\theta\in [0,1]\setminus\{1/2\}$, there is a constant $c_5(\kappa,\ve)>0$ such that
$$
\|\mathcal{A}(v)^{-1}\|_{\ml(W_{2,D}^{\theta-1}(\Omega),W_{2,D}^{\theta+1}(\Omega))}\,\le\, c_5(\kappa,\ve)\ ,\quad v\in\overline{S}_q(\kappa)\ .
$$
\end{lem}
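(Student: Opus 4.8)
The plan is to establish the bound on $\|\mathcal{A}(v)^{-1}\|_{\ml(W_{2,D}^{\theta-1}(\Omega),W_{2,D}^{\theta+1}(\Omega))}$ for the two endpoint values $\theta=0$ and $\theta=1$ first, and then to obtain the intermediate values by interpolation together with the compatibility of the scales. For $\theta=1$, the claim is that $\mathcal{A}(v)^{-1}$ maps $L_2(\Omega)=W_{2,D}^0(\Omega)$ boundedly into $W_{2,D}^2(\Omega)$ with a constant depending only on $\kappa$ and $\ve$; this is precisely the content of the estimate \eqref{2.4} in Lemma~\ref{L1t}, since $\mathcal{A}(v)^{-1}F$ solves \eqref{230a}--\eqref{240a}. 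For $\theta=0$, the claim is that $\mathcal{A}(v)^{-1}$ maps $W_{2,D}^{-1}(\Omega)$ boundedly into $W_{2,D}^1(\Omega)$, and this is exactly estimate \eqref{2.3} in Lemma~\ref{L1t}. So both endpoints are already in hand, and the crucial point is that the constants $c_2(\kappa,\ve)$ appearing there are uniform over $v\in\overline{S}_q(\kappa)$.

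The next step is interpolation. By complex (or real) interpolation, for $\theta\in(0,1)$ one has $[W_{2,D}^{-1}(\Omega),L_2(\Omega)]_\theta = W_{2,D}^{\theta-1}(\Omega)$ and $[W_{2,D}^1(\Omega),W_{2,D}^2(\Omega)]_\theta = W_{2,D}^{\theta+1}(\Omega)$, provided one stays away from the exceptional exponent $1/2$ where the boundary-trace condition changes the identification of the interpolation space; this is precisely why $\theta=1/2$ is excluded, since then $\theta+1=3/2$ and $\theta-1=-1/2$ are the critical values for the $W_{2,D}$ scale. Because $\mathcal{A}(v)^{-1}$ is bounded from $W_{2,D}^{-1}(\Omega)$ to $W_{2,D}^1(\Omega)$ with norm $\le c_2(\kappa,\ve)$ and from $L_2(\Omega)$ to $W_{2,D}^2(\Omega)$ with norm $\le c_2(\kappa,\ve)$, the interpolation inequality for operators gives at once
\begin{equation*}
\|\mathcal{A}(v)^{-1}\|_{\ml(W_{2,D}^{\theta-1}(\Omega),W_{2,D}^{\theta+1}(\Omega))}\le c_2(\kappa,\ve)^{1-\theta}\,c_2(\kappa,\ve)^{\theta}=c_2(\kappa,\ve)
\end{equation*}
for $\theta\in(0,1)\setminus\{1/2\}$, while $\theta=0$ and $\theta=1$ are the endpoint statements themselves; this yields the asserted constant $c_5(\kappa,\ve)$ (one may simply take $c_5=c_2$).

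The main obstacle I anticipate is not the boundedness of $\mathcal{A}(v)^{-1}$ on each individual scale — that is already supplied by Lemma~\ref{L1t} — but rather making the interpolation identification of the $W_{2,D}$-spaces precise and citing it correctly, especially the behaviour at and around the threshold $1/2$ (and, on the dual side, $-1/2$ and $3/2$). One must invoke the standard identification of interpolation spaces between Sobolev spaces with and without vanishing boundary traces (as in Grisvard or Triebel) and verify that for $\theta\in[0,1]\setminus\{1/2\}$ the relevant endpoints $\theta\pm1$ avoid the critical value $1/2$, so that the scale behaves exactly like the full Bessel-potential scale and the trace condition interpolates as expected. Once this identification is granted, the argument is immediate; care is only needed in quoting the interpolation result and in keeping track that all constants are $v$-independent over $\overline{S}_q(\kappa)$, which follows since the constants in Lemma~\ref{L1t} already have that property.
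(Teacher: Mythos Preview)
Your proposal is correct and follows essentially the same approach as the paper: endpoint bounds from Lemma~\ref{L1t}, then complex interpolation with identification of the interpolation spaces. Two small points of comparison: the paper only needs (and only asserts) the continuous \emph{embedding} $[W_{2,D}^1(\Omega),W_{2,D}^2(\Omega)]_\theta\hookrightarrow W_{2,D}^{\theta+1}(\Omega)$ on the target side rather than equality, and the exclusion $\theta\ne 1/2$ in the paper actually enters on the domain side via $[W_{2,D}^{-1}(\Omega),L_2(\Omega)]_\theta=(W_{2,D}^{1-\theta}(\Omega))'$, where $1-\theta=1/2$ is the genuinely critical exponent (the values $3/2$ and $-1/2$ are not the obstruction here).
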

%%%%%%%%%%%%%%%%%%%%%%%%%%%%%%%%%%%%%%%%%%%%%%%%%%%%%%%%

\begin{proof}[{\bf Proof}]
Due to Lemma~\ref{L1t}, $\mathcal{A}(v)^{-1}$ belongs for each $v\in\overline{S}_q(\kappa)$ to both $\ml(W_{2,D}^{-1}(\Omega),W_{2,D}^1(\Omega))$ and $\ml(L_2(\Omega),W_{2,D}^2(\Omega))$ with
$$
\|\mathcal{A}(v)^{-1}\|_{\ml(W_{2,D}^{-1}(\Omega),W_{2,D}^1(\Omega))}\, +\, \|\mathcal{A}(v)^{-1}\|_{\ml(L_2(\Omega),W_{2,D}^2(\Omega))}\, \le\, 2\,c_2(\kappa,\ve) \ .
$$
Hence, using complex interpolation, we derive for $\theta\in [0,1]$,
$$
\|\mathcal{A}(v)^{-1}\|_{\ml\big([W_{2,D}^{-1}(\Omega),L_2(\Omega)]_\theta,[W_{2,D}^{1}(\Omega),W_{2,D}^{2}(\Omega)]_\theta\big)}\,\le\, 2\,c_2(\kappa,\ve) \ ,
$$
and it then remains to characterize the interpolation spaces for $\theta\in [0,1]\setminus\{1/2\}$. For this we first invoke \cite[Thm.~1.1.11]{Triebel} to obtain that
$$
[W_{2,D}^{-1}(\Omega),L_2(\Omega)]_\theta = [(W_{2,D}^{1}(\Omega))',(L_2(\Omega))']_\theta = [W_{2,D}^{1}(\Omega),L_2(\Omega)]_\theta'
$$
with equivalent norms and hence, using \cite[Cor.~1.4.4.5]{Grisvard} and \cite[Prop.~2.11 \& Prop.~3.3]{JerisonKenig} to characterize the last interpolation space,
$$
[W_{2,D}^{-1}(\Omega),L_2(\Omega)]_\theta = (W_{2,D}^{1-\theta}(\Omega))' = W_{2,D}^{\theta-1}(\Omega)\ .
$$
Finally, since the embedding
%the trace operator possesses a co-retraction $W_{2}^{s-1/2}(\partial\Omega)\rightarrow %W_{2}^{1/2}(\partial\Omega)$ according to \cite[Thm.1.5.1.2]{Grisvard}  the same argument as in %the proof of \cite[Prop.2.11]{JerisonKenig}
$$
[W_{2,D}^{1}(\Omega),W_{2,D}^{2}(\Omega)]_\theta \hookrightarrow W_{2,D}^{\theta+1}(\Omega)
$$
is obviously continuous by \cite[Thm.~4.3.2/2]{Triebel} and definition of interpolation, the assertion follows.
\end{proof}

Next, we show that $\mathcal{A}(v)$ is  Lipschitz continuous with respect to $v$ in a suitable topology. More precisely:

%%%%%%%%%%%%%%%%%%%%%%%%%%%%%%%%%%%%%%%%%%%%%%%%%%%%%%%%
\begin{lem}\label{L1c}
Given $\xi\in [0,(q-1)/q)$ and $\alpha\in (\xi,1)$, there exists $c_6(\kappa,\ve)>0$ such that
$$
\|\mathcal{A}(v)-\mathcal{A}(w)\|_{\ml(W_{2,D}^{2}(\Omega),W_{2,D}^{-\alpha}(\Omega))}\le c_6(\kappa,\ve)\, \|v-w\|_{W_{q}^{2-\xi}(I)}\ ,\quad v,w\in \overline{S}_q(\kappa)\ .
$$
\end{lem}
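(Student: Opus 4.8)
The plan is to estimate the difference operator $\mathcal{A}(v)-\mathcal{A}(w) = \ml_w - \ml_v$ term by term, writing $(\ml_v - \ml_w)\Phi$ as a sum of contributions involving the coefficient differences $a(v)-a(w)$ where $a$ runs over the (rational) coefficient functions $\partial_x v/(1+v)$, $(1+\e^2\eta^2(\partial_x v)^2)/(1+v)^2$, $\e^2\eta^2(\partial_x v)^2/(1+v)^2$, etc. appearing in $\ml_v$. For each such term we must bound, as a functional on $\Phi \in W_{2,D}^2(\Omega)$, an integral like $\int_\Omega [a(v)-a(w)]\, \partial^{(2)}\Phi\, \Psi\, \rd(x,\eta)$ against test functions $\Psi\in W_{2,D}^{\alpha}(\Omega)$, uniformly in $\|\Psi\|_{W_{2,D}^\alpha(\Omega)}\le 1$, so that the result is controlled by $\|v-w\|_{W_q^{2-\xi}(I)}$.

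The first step is to record uniform bounds on the coefficient functions themselves and Lipschitz estimates for the maps $v\mapsto a(v)$: since $1+v\ge\kappa$ and $\|v\|_{C^1([-1,1])}\le c_3/\kappa$ on $\overline{S}_q(\kappa)$ by \eqref{2.6}, all coefficients except those involving $\partial_x^2 v$ are Lipschitz from $\overline{S}_q(\kappa)$ (with the $C^1$, equivalently the $W_q^{2-\xi}$ for $\xi<(q-1)/q$ so that $W_q^{2-\xi}\hookrightarrow C^1$, topology) into $C(\bar\Omega)$ or even $C^\beta(\bar\Omega)$. The genuinely second-order coefficient $\partial_x^2 v/(1+v)$ in the $\partial_\eta$-term is only in $L_q(I)\subset L_2(I)$, and its difference is controlled in $L_2$ merely by $\|v-w\|_{W_q^2(I)}$; here the gain comes from the test function: the relevant term is $\int_\Omega \e^2\eta\,[(\partial_x^2 v)/(1+v) - (\partial_x^2 w)/(1+w)]\,\partial_\eta\Phi\,\Psi\,\rd(x,\eta)$, which we bound by $\|(\partial_x^2 v)/(1+v)-(\partial_x^2 w)/(1+w)\|_{L_q(I)}\,\|\partial_\eta\Phi\|_{L_\infty}\|\Psi\|_{L_{q'}}$ after splitting the coefficient difference as $(\partial_x^2(v-w))/(1+v) + \partial_x^2 w\,[1/(1+v)-1/(1+w)]$; the first piece needs care since we only allow $\|v-w\|_{W_q^{2-\xi}(I)}$, so we instead keep $\partial_x^2(v-w)$ paired with $\partial_\eta\Phi\cdot\Psi$ and integrate by parts in $x$, moving the $x$-derivative onto $\partial_\eta\Phi$ and $\Psi$: this transfers one derivative off $v-w$ (leaving $\partial_x(v-w)\in W_q^{1-\xi}(I)\hookrightarrow C^\beta$ for suitable $\xi$) at the cost of requiring $\partial_x\Psi\in L_2(\Omega)$, i.e. $\alpha\ge 1$ — which is why the statement demands $\alpha\in(\xi,1)$ and works in the dual scale $W_{2,D}^{-\alpha}(\Omega)=(W_{2,D}^\alpha(\Omega))'$.

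More precisely, the key mechanism is: the second-order coefficient multiplies $\partial_\eta\Phi$ (a \emph{first}-order object, bounded in $W_2^1(\Omega)\hookrightarrow L_p(\Omega)$ for all $p<\infty$ in two dimensions, and also in $C(\bar\Omega)$ is false but $L_p$ suffices), so after integration by parts in $x$ we land on $\partial_x(v-w)$ times lower-order terms, and $W_q^{1-\xi}(I)\hookrightarrow C([-1,1])$ for $\xi<(q-1)/q$, while the remaining factors $\partial_x\partial_\eta\Phi$, $\partial_\eta\Phi$, $\Psi$, $\partial_x\Psi$ are all in $L_2(\Omega)$ by $\Phi\in W_{2,D}^2$, $\Psi\in W_{2,D}^\alpha$ with $\alpha\ge \alpha>\xi$ but we need $\alpha\le 1$ for the duality pairing to make sense against $W_{2,D}^{-\alpha}$ and $\alpha$ close to $1$ to have $\partial_x\Psi$; one checks the interpolation/Sobolev embeddings $W_q^{2-\xi}(I)\hookrightarrow W_q^{1-\xi}(I)\hookrightarrow C^{\beta}([-1,1])$ for some $\beta>0$ make every factor land in an $L_2$ or $L_\infty$ space with the norms paired correctly by Hölder. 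All the purely first-order coefficient differences (the $\partial_x^2 w$ terms aside) are easier: they are Lipschitz into $C(\bar\Omega)$, so their contribution is bounded by $\|v-w\|_{W_q^{2-\xi}(I)}\|\Phi\|_{W_{2,D}^2(\Omega)}\|\Psi\|_{L_2(\Omega)}$ directly, requiring only $\alpha\ge 0$. Collecting the finitely many terms and taking the supremum over $\|\Psi\|_{W_{2,D}^\alpha(\Omega)}\le 1$ yields the claimed estimate with a constant $c_6(\kappa,\ve)$ depending on $c_3$, $\kappa$, $\e$, $q$, $\xi$, $\alpha$.

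The main obstacle is the second-order-coefficient term: controlling $\int_\Omega \e^2\eta\,[\partial_x^2 v/(1+v)-\partial_x^2 w/(1+w)]\,\partial_\eta\Phi\,\Psi$ by $\|v-w\|_{W_q^{2-\xi}}$ rather than $\|v-w\|_{W_q^2}$ forces the integration-by-parts-in-$x$ trick, which in turn dictates the constraint $\alpha<1$ (so that we stay in the interpolation scale below the Dirichlet-trace threshold issues and the dual pairing is clean) and $\alpha>\xi$ (so that after the integration by parts the residual $\partial_x(v-w)\in W_q^{1-\xi}\hookrightarrow C^\beta$ with $\beta>0$, and the boundary terms in $x$ at $x=\pm1$ are handled using $v(\pm1)=w(\pm1)=0$ together with $\Phi,\Psi$ vanishing appropriately on the lateral boundary — here one must be a little careful that $\Psi\in W_{2,D}^\alpha(\Omega)$ with $\alpha>1/2$ has a trace, which is consistent with $\alpha$ close to $1$). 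Everything else is a routine, if tedious, Hölder-and-Sobolev bookkeeping exercise.
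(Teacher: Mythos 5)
Your decomposition into coefficient differences and your identification of the $\partial_x^2 v/(1+v)$ term as the genuinely problematic one both match the paper's proof (the paper's $J_4$), and the treatment you propose for the remaining terms (Lipschitz estimates in $C(\bar\Omega)$ or $L_q(I)$ via the embedding $W_q^{2-\xi}(I)\hookrightarrow W_\infty^1(I)$, valid for $\xi<(q-1)/q$) is the same. But your mechanism for the critical term is not the paper's, and as written it does not close.

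You propose to integrate by parts in $x$, moving one derivative off $\partial_x^2(v-w)$ onto $\partial_\eta\Phi\cdot\Psi$. You yourself observe that this needs $\partial_x\Psi\in L_2(\Omega)$, i.e.\ $\alpha\ge 1$, and then assert that this ``is why the statement demands $\alpha\in(\xi,1)$'' --- but these two requirements are contradictory. For $\alpha<1$ the test function $\Psi\in W_{2,D}^\alpha(\Omega)$ has only fractional regularity, $\partial_x\Psi\notin L_2(\Omega)$ in general, and the boundary terms at $x=\pm1$ you invoke are not available either. So the integration-by-parts route cannot work in the stated dual scale, and this is a genuine gap, not a bookkeeping issue.

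The paper avoids the whole problem by never putting a derivative on $\varphi$. It first integrates the fiber variable $\eta$ out to reduce to a one-dimensional pairing on $I$, and then uses the $\emph{negative-order}$ duality $(W_{q'}^\xi(I))'=W_q^{-\xi}(I)$ (valid since $\xi\in[0,(q-1)/q)$) to pair $\partial_x^2(v-w)$, estimated in $W_q^{-\xi}(I)$ by $\|v-w\|_{W_q^{2-\xi}(I)}$, against $\frac{1}{1+v}\int_0^1 \eta\,\partial_\eta\Phi(\cdot,\eta)\,\varphi(\cdot,\eta)\,\rd\eta$. That the latter lies in $W_{q'}^\xi(I)$ follows from two pointwise-multiplication theorems, $W_q^2(I)\cdot W_2^\beta(I)\hookrightarrow W_{q'}^\xi(I)$ and $C^2(\bar\Omega)\cdot W_2^1(\Omega)\cdot W_2^\alpha(\Omega)\hookrightarrow W_2^\beta(\Omega)$ for $\beta\in[\xi,\alpha)$, combined with the interpolation bound $\|\int_0^1\zeta(\cdot,\eta)\rd\eta\|_{W_2^\gamma(I)}\le c\|\zeta\|_{W_2^\gamma(\Omega)}$. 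The constraint $\alpha>\xi$ enters precisely to leave room for such a $\beta$; $\alpha<1$ is not in tension with anything, since one never differentiates $\varphi$. You should replace your integration-by-parts step by this duality-plus-multiplication argument.
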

%%%%%%%%%%%%%%%%%%%%%%%%%%%%%%%%%%%%%%%%%%%%%%%%%%%%%%%%

\begin{proof}[{\bf Proof}]
Consider $v,w\in \overline{S}_q(\kappa)$ and $\Phi\in W_{2,D}^2(\Omega)$. Then $\mathcal{A}(v)\Phi\in L_2(\Omega)\hookrightarrow W_{2,D}^{-\alpha}(\Omega)$ and so, for any $\varphi\in W_{2,D}^\alpha(\Omega)$,
\begin{equation*}
\begin{split}
\int_\Omega \left[ \big(\mathcal{A}(v)-\mathcal{A}(w)\big)\,\Phi \right]\,\varphi\,\rd (x,\eta) 
 =\ &  2\ve^2\int_\Omega \eta\, \left(\frac{\partial_x v}{1+v}-\frac{\partial_x w}{1+w}\right)\, \partial_x\partial_\eta\Phi\,\varphi\, \rd (x,\eta)\\
&-\int_\Omega \left(\frac{1+\ve^2\eta^2(\partial_x v)^2}{(1+v)^2}-\frac{1+\ve^2\eta^2(\partial_x w)^2}{(1+w)^2}\right)\,\partial_\eta^2\Phi\,\varphi\, \rd (x,\eta)\\
&- 2\ve^2\int_\Omega\eta\left[\left(\frac{\partial_x v}{1+v}\right)^2-\left(\frac{\partial_x w}{1+w}\right)^2\right]\, \partial_\eta\Phi\,\varphi\, \rd (x,\eta)\\
&+\ve^2\int_\Omega\eta  \,\left(\frac{\partial_x^2 v}{1+v}-\frac{\partial_x^2 w}{1+w}\right)\, \partial_\eta\Phi\,\varphi\, \rd (x,\eta)\\
=\ & :J_1+J_2+J_3+J_4\ .
\end{split}
\end{equation*}
Since $\xi\in [0,(q-1)/q)$ it follows from the continuous embedding $W_q^{2-\xi}(I)\hookrightarrow W_\infty^1(I)$ that
\begin{equation*}
\begin{split}
\vert J_1\vert &\le 2\,\ve^2\,\left\|\frac{\partial_x v}{1+v}-\frac{\partial_x w}{1+w}\right\|_{L_\infty(I)}\, \|\partial_x\partial_\eta\Phi\|_{L_2(\Omega)}\, \|\varphi\|_{L_2(\Omega)}\\
&\le \ve^2\, c(\kappa)\, \left\|v-w\right\|_{W_q^{2-\xi}(I)}\, \| \Phi\|_{W_{2,D}^2(\Omega)}\, \|\varphi\|_{W_{2,D}^\alpha(\Omega)}\ .
\end{split}
\end{equation*}
Similarly, we have
$$
\vert J_2\vert + \vert J_3\vert \le (1+\ve^2)\, c(\kappa)\, \left\|v-w\right\|_{W_q^{2-\xi}(I)}\, \| \Phi\|_{W_{2,D}^2(\Omega)}\, \|\varphi\|_{W_{2,D}^\alpha(\Omega)}\ .
$$ 
Finally, we infer from the embedding  $W_2^{1}(\Omega)\hookrightarrow L_{2q/(q-2)}(\Omega)$ and the fact that $(W_{q'}^\xi(I))'=W_q^{-\xi}(I)$ since $\xi\in [0,(q-1)/q)$, see, e.g., \cite[Eq.~(5.14)]{AmannTeubner}, that
\begin{equation*}
\begin{split}
\vert J_4\vert &\le \ve^2\left\vert\int_\Omega \partial_x^2 (v-w)\,\frac{\eta\, \partial_\eta\Phi\,\varphi} {1+v}\, \rd (x,\eta)\right\vert\\
&\quad+\ve^2\,\|\partial_x^2 w\|_{L_q(I)}\left\| \frac{1}{1+v}-\frac{1}{1+w}\right\|_{L_\infty(I)}\,\|\partial_\eta\Phi\|_{L_{2q/(q-2)}(\Omega)}\, \|\varphi\|_{L_2(\Omega)}\\
&\le \ve^2\, \|\partial_x^2 (v-w)\|_{W_q^{-\xi}(I)}\, \left\|\frac{1}{1+v} \int_0^1 \eta\, \partial_\eta\Phi(\cdot,\eta)\,\varphi(\cdot,\eta) \, \rd \eta\right\|_{W_{q'}^{\xi}(I)} \\
&\quad+\ve^2\,c(\kappa)\, \| v-w\|_{W_q^{2-\xi}(I)}\, \|\partial_\eta\Phi\|_{W_{2,D}^1(\Omega)}\, \|\varphi\|_{W_{2,D}^\alpha(\Omega)}\ .
\end{split}
\end{equation*}
Now, choosing $\beta\in [\xi,\alpha)$, pointwise multiplication is continuous as mappings
$$
W_q^2(I)\cdot W_2^\beta (I)\hookrightarrow W_{q'}^{\xi}(I)\ ,\qquad C^2(\bar{\Omega})\cdot W_2^1(\Omega)\cdot W_2^\alpha(\Omega)\hookrightarrow W_2^\beta(\Omega)\ 
$$
according to Theorem~\ref{PM}, while an interpolation argument shows
\bqn\label{interpol}
\left\|\int_0^1\zeta(\cdot,\eta)\,\rd \eta\right\|_{W_2^{\gamma}(I)}\le c\,\|\zeta\|_{W_2^{\gamma}(\Omega)}\ ,\quad \zeta\in W_2^{\gamma}(\Omega)\ ,
\eqn
for all $\gamma\in [0,1]$, since $\zeta \mapsto \int_0^1\zeta(\cdot,\eta)\,\rd \eta$ belongs to both $\ml(L_2(\Omega),L_2(I))$ and $\ml(W_2^1(\Omega),W_2^1(I))$. Therefore, we deduce that
\begin{equation*}
\begin{split}
\left\|\frac{1}{1+v} \int_0^1 \eta\, \partial_\eta\Phi(\cdot,\eta)\,\varphi(\cdot,\eta) \, \rd \eta\right\|_{W_{q'}^{\xi}(I)}\,
&\le \left\|\frac{1}{1+v}\right\|_{W_q^2(I)}\, \left\|\int_0^1 \eta\, \partial_\eta\Phi(\cdot,\eta)\,\varphi(\cdot,\eta) \, \rd \eta\right\|_{W_2^{\beta}(I)}\\
&\le\, c(\kappa)\, \|\eta\,\partial_\eta\Phi\,\varphi\|_{W_{2,D}^\beta(\Omega)}\\
&\le \, c(\kappa)\, \|\partial_\eta\Phi\|_{W_{2,D}^1(\Omega)}\, \|\varphi\|_{W_{2,D}^\alpha(\Omega)}\ ,
\end{split}
\end{equation*}
and we arrive at
$$
\vert J_4\vert \le \ve^2\, c(\kappa)\, \left\|v-w\right\|_{W_q^{2-\xi}(I)}\, \| \Phi\|_{W_{2,D}^2(\Omega)}\, \|\varphi\|_{W_{2,D}^\alpha(\Omega)}\ .
$$ 
Consequently, gathering the above estimates on $\vert J_i\vert$, we obtain
$$
\|(\mathcal{A}(v)-\mathcal{A}(w))\Phi\|_{W_{2,D}^{-\alpha}(\Omega)}\le (1+\ve^2)\, c(\kappa)\, \|v-w\|_{W_{q}^{2-\xi}(I)}\,\|\Phi\|_{W_{2,D}^2(\Omega)}\ ,
$$
whence the claim.
\end{proof}

Next, we prove that $f_v$ from \eqref{f} depends Lipschitz continuously on $v\in \overline{S}_q(\kappa)$.

%%%%%%%%%%%%%%%%%%%%%%%%%%%%%%%%%%%%%%%%%%%%%%%%%%%%%%%%
\begin{lem}\label{L1d}
 Given $\xi\in [0,(q-1)/q)$ and $\alpha\in (\xi,1)$,  there exists $c_7(\kappa)>0$ such that
$$
\|f_v-f_w\|_{W_{2,D}^{-\alpha}(\Omega)}\le c_7(\kappa)\, \ve^2\, \|v-w\|_{W_{q}^{2-\xi}(I)}\ ,\quad v,w\in \overline{S}_q(\kappa)\ .
$$
\end{lem}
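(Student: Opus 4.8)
The plan is to estimate $f_v - f_w$ directly from its explicit formula \eqref{f}, using duality against test functions $\varphi \in W_{2,D}^\alpha(\Omega)$; indeed, since $f_v, f_w \in L_2(\Omega)$ by \eqref{spip}, it suffices to bound $\int_\Omega (f_v - f_w)\,\varphi\,\rd(x,\eta)$ by $c(\kappa)\,\ve^2\,\|v-w\|_{W_q^{2-\xi}(I)}\,\|\varphi\|_{W_{2,D}^\alpha(\Omega)}$. Writing out the difference gives
\begin{equation*}
f_v - f_w = 2\ve^2\eta\left[\left(\frac{\partial_x v}{1+v}\right)^2 - \left(\frac{\partial_x w}{1+w}\right)^2\right] - \ve^2\eta\left(\frac{\partial_x^2 v}{1+v} - \frac{\partial_x^2 w}{1+w}\right)\ ,
\end{equation*}
so there are two contributions to handle, entirely parallel to the terms $J_3$ and $J_4$ from the proof of Lemma~\ref{L1c} (with $\partial_\eta\Phi$ replaced by the constant $1$).

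The first term is the easy one: using \eqref{2.6}, the product $\left(\frac{\partial_x v}{1+v}\right)^2 - \left(\frac{\partial_x w}{1+w}\right)^2$ is estimated in $L_\infty(I)$ by $c(\kappa)\|v-w\|_{C^1([-1,1])}$, and then the continuous embedding $W_q^{2-\xi}(I) \hookrightarrow W_\infty^1(I)$ (valid since $\xi < (q-1)/q$) controls this by $c(\kappa)\|v-w\|_{W_q^{2-\xi}(I)}$; pairing against $\varphi$ costs only $\|\varphi\|_{L_2(\Omega)} \le \|\varphi\|_{W_{2,D}^\alpha(\Omega)}$. The second term is the genuinely delicate one, because $\partial_x^2 v$ and $\partial_x^2 w$ have only $L_q(I)$ regularity and we are measuring the difference in the weaker norm $W_q^{2-\xi}(I)$, which controls $\partial_x^2(v-w)$ only in $W_q^{-\xi}(I)$. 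So I would split
\begin{equation*}
\frac{\partial_x^2 v}{1+v} - \frac{\partial_x^2 w}{1+w} = \frac{\partial_x^2(v-w)}{1+v} + \partial_x^2 w\left(\frac{1}{1+v} - \frac{1}{1+w}\right)\ .
\end{equation*}
The second summand here is handled as in the first term (bound $\partial_x^2 w$ in $L_q$, the bracket in $L_\infty$ via \eqref{2.6} by $c(\kappa)\|v-w\|_\infty$, and pair with $\varphi$ using $W_2^1(\Omega)\hookrightarrow L_{2q/(q-2)}(\Omega)$ if needed — but since $\partial_\eta\Phi$ is now just $1$ this is even simpler). For the first summand I use the duality $(W_{q'}^\xi(I))' = W_q^{-\xi}(I)$ (valid for $\xi \in [0,(q-1)/q)$, cf.\ \cite[Eq.~(5.14)]{AmannTeubner}) to write
\begin{equation*}
\ve^2\left|\int_\Omega \partial_x^2(v-w)\,\frac{\eta\,\varphi}{1+v}\,\rd(x,\eta)\right| \le \ve^2\,\|\partial_x^2(v-w)\|_{W_q^{-\xi}(I)}\,\left\|\frac{1}{1+v}\int_0^1 \eta\,\varphi(\cdot,\eta)\,\rd\eta\right\|_{W_{q'}^\xi(I)}\ ,
\end{equation*}
and then, exactly as in Lemma~\ref{L1c}, pick $\beta \in [\xi,\alpha)$ so that pointwise multiplication $W_q^2(I)\cdot W_2^\beta(I) \hookrightarrow W_{q'}^\xi(I)$ is continuous (Theorem~\ref{PM}), bound $\|1/(1+v)\|_{W_q^2(I)} \le c(\kappa)$, and use the interpolation estimate \eqref{interpol} together with $W_2^1(\Omega)\cdot W_2^\alpha(\Omega) \hookrightarrow W_2^\beta(\Omega)$ to get $\big\|\int_0^1 \eta\,\varphi(\cdot,\eta)\,\rd\eta\big\|_{W_2^\beta(I)} \le c\,\|\varphi\|_{W_{2,D}^\alpha(\Omega)}$. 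Finally $\|\partial_x^2(v-w)\|_{W_q^{-\xi}(I)} \le \|v-w\|_{W_q^{2-\xi}(I)}$.

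Collecting the two estimates and taking the supremum over $\varphi$ with $\|\varphi\|_{W_{2,D}^\alpha(\Omega)}\le 1$ yields the claimed bound with a constant $c_7(\kappa)$ independent of $\ve$ (all the $\ve$-dependence having been made explicit as the prefactor $\ve^2$, since the bounds \eqref{2.6} on $v$ do not involve $\ve$). The main obstacle is the low-regularity term $J_4$-analogue: one has to be careful that the argument reducing a $W_q^{-\xi}\times W_{q'}^\xi$ pairing to the product/interpolation estimates genuinely closes, and in particular that $\xi < (q-1)/q$ is exactly what makes both the embedding $W_q^{2-\xi}(I)\hookrightarrow W_\infty^1(I)$ and the duality $(W_{q'}^\xi(I))' = W_q^{-\xi}(I)$ available simultaneously — this is why the hypothesis on $\xi$ is stated in this form. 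Everything else is a routine repetition of the computations already carried out in the proof of Lemma~\ref{L1c}.
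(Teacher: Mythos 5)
Your proposal is correct and follows essentially the same route as the paper: dualize against $\varphi\in W_{2,D}^\alpha(\Omega)$, handle the quadratic gradient term and the $\partial_x^2 w\,(1/(1+v)-1/(1+w))$ term via $L_\infty$ bounds and the embedding $W_q^{2-\xi}(I)\hookrightarrow W_\infty^1(I)$, and treat the delicate $\partial_x^2(v-w)$ term by the $W_q^{-\xi}$--$W_{q'}^\xi$ duality combined with the pointwise multiplication $W_q^2(I)\cdot W_2^\beta(I)\hookrightarrow W_{q'}^\xi(I)$ from Theorem~\ref{PM} and the fiber-integration estimate \eqref{interpol}. The paper symmetrically groups $\partial_x^2 v\,(1/(1+v)-1/(1+w))$ with $\partial_x^2(v-w)/(1+w)$ rather than your split, and simply takes $\beta=\alpha$ directly in \eqref{interpol} instead of introducing an intermediate $\beta\in[\xi,\alpha)$ — with $\partial_\eta\Phi$ replaced by the constant $1$, the extra step $W_2^1(\Omega)\cdot W_2^\alpha(\Omega)\hookrightarrow W_2^\beta(\Omega)$ you invoke is unnecessary, since $\eta$ is smooth and one can bound $\|\eta\varphi\|_{W_2^\alpha(\Omega)}$ by $\|\varphi\|_{W_{2,D}^\alpha(\Omega)}$ outright; but this is a cosmetic simplification, not a gap.
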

%%%%%%%%%%%%%%%%%%%%%%%%%%%%%%%%%%%%%%%%%%%%%%%%%%%%%%%%

\begin{proof}[{\bf Proof}]
Consider $v,w\in \overline{S}_q(\kappa)$ and recall that  $f_v\in L_2(\Omega)\hookrightarrow W_{2,D}^{-\alpha}(\Omega)$ by \eqref{spip}. Then, it follows from the embedding $W_2^{2-\xi}(I)\hookrightarrow W_\infty^1(I)$ and \eqref{2.6} that, for $\varphi\in W_{2,D}^\alpha(\Omega)$,
\begin{equation*}
\begin{split}
&\left\vert \int_\Omega \big(f_v-f_w\big)\,\varphi\,\rd (x,\eta)\right\vert\\
&\qquad\le\, 2\,\ve^2\int_\Omega \left\vert\left(\frac{\partial_xv}{1+v}\right)^2-\left(\frac{\partial_xw}{1+w}\right)^2\right\vert\, \vert\varphi\vert\,\rd (x,\eta)
 + \ve^2\int_\Omega \left\vert\partial_x^2 v\right\vert\, \left\vert\frac{1}{1+v}-\frac{1}{1+w}\right\vert\,\vert\varphi\vert\,\rd (x,\eta)\\
&\qquad\quad + \ve^2\left\vert\int_\Omega \big(\partial_x^2 v-\partial_x^2 w\big)\,\frac{\varphi}{1+w}\,\rd (x,\eta)\right\vert\\
&\qquad\le 
\ve^2\, c(\kappa)\, \|v-w\|_{W_\infty^1(I)}\, \|\varphi\|_{L_2(\Omega)} + \ve^2\, c(\kappa)\, \|\partial_x^2v\|_{L_2(I)}\, \|v-w\|_{L_\infty(I)}\, \|\varphi\|_{L_2(\Omega)}\\
&\qquad\quad + \ve^2\, \|\partial_x^2(v-w)\|_{W_q^{-\xi}(I)}\,\left\|\frac{1}{1+w}\int_0^1\varphi(\cdot,\eta)\,\rd \eta \right\|_{W_{q'}^\xi(I)}\\
&\qquad\le\, \ve^2\,c(\kappa)\, \|v-w\|_{W_q^{2-\xi}(I)}\left(\|\varphi\|_{W_{2,D}^\alpha(\Omega)}
+\left\|\frac{1}{1+w}\right\|_{W_q^2(I)}\, \left\|\int_0^1\varphi(\cdot,\eta)\,\rd \eta \right\|_{W_2^\alpha(I)}\right)\ ,
\end{split}
\end{equation*}
where we have used for the last inequality continuity of pointwise multiplication
$$
W_q^2(I)\cdot W_2^\alpha(I)\hookrightarrow W_{q'}^\xi(I)
$$
as in the proof of Lemma~\ref{L1c}. Taking $\gamma=\alpha$ in \eqref{interpol} and using \eqref{2.6}, we end up with
$$
\left\vert \int_\Omega \big(f_v-f_w\big)\,\varphi\,\rd (x,\eta)\right\vert\,\le\,\ve^2\,c(\kappa)\,\|v-w\|_{W_q^{2-\xi}(I)} \, \|\varphi\|_{W_{2,D}^\alpha(\Omega)}\ ,
$$
which yields the assertion.
\end{proof}

Combining the previous lemmas we now readily obtain the Lipschitz continuity of $\phi_v$ with respect to $v\in \overline{S}_q(\kappa)$.

%%%%%%%%%%%%%%%%%%%%%%%%%%%%%%%%%%%%%%%%%%%%%%%%%%%%%%%%
\begin{lem}\label{L2a}
Given $\xi\in [0,(q-1)/q)$ and $\alpha\in (\xi,1)$,  there exists $c_8(\kappa,\ve)>0$ such that
$$
\|\phi_v-\phi_w\|_{W_{2,D}^{2-\alpha}(\Omega)}\le c_8(\kappa,\ve)\, \|v-w\|_{W_{q}^{2-\xi}(I)}\ ,\quad v,w\in \overline{S}_q(\kappa)\ .
$$
\end{lem}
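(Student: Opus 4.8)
The plan is to derive the estimate for $\phi_v - \phi_w$ from the resolvent identity for the operators $\mathcal{A}(v)$ together with the Lipschitz bounds already collected in Lemmas~\ref{L1b}, \ref{L1c}, and \ref{L1d}. Recall that by \eqref{phiv} we have $\phi_v - \phi_w = \Phi_v - \Phi_w$ since the affine part $\eta$ cancels, and that $\Phi_v = \mathcal{A}(v)^{-1} f_v$, $\Phi_w = \mathcal{A}(w)^{-1} f_w$. Writing the standard resolvent-type identity
\begin{equation*}
\Phi_v - \Phi_w = \mathcal{A}(v)^{-1} f_v - \mathcal{A}(w)^{-1} f_w = \mathcal{A}(v)^{-1}\big(f_v - f_w\big) + \mathcal{A}(v)^{-1}\big(\mathcal{A}(w)-\mathcal{A}(v)\big)\mathcal{A}(w)^{-1} f_w\ ,
\end{equation*}
I would estimate each of the two terms on the right in the norm of $W_{2,D}^{2-\alpha}(\Omega)$.

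For the first term, $f_v - f_w \in W_{2,D}^{-\alpha}(\Omega)$ by Lemma~\ref{L1d}, and applying Lemma~\ref{L1b} with $\theta = 1-\alpha \in [0,1]\setminus\{1/2\}$ (valid since $\alpha \in (\xi,1) \subset (0,1)$, so one must additionally exclude $\alpha = 1/2$ or choose $\alpha \ne 1/2$ from the outset — this is a harmless restriction that can be relaxed afterward by monotonicity of the Sobolev scale) gives
$$
\|\mathcal{A}(v)^{-1}(f_v - f_w)\|_{W_{2,D}^{2-\alpha}(\Omega)} \le c_5(\kappa,\ve)\, \|f_v - f_w\|_{W_{2,D}^{-\alpha}(\Omega)} \le c_5(\kappa,\ve)\, c_7(\kappa)\, \ve^2\, \|v-w\|_{W_q^{2-\xi}(I)}\ .
$$
For the second term, $\mathcal{A}(w)^{-1} f_w = \Phi_w \in W_{2,D}^2(\Omega)$ with $\|\Phi_w\|_{W_{2,D}^2(\Omega)} \le c_2(\kappa,\ve)\,\|f_w\|_{L_2(\Omega)} \le c_2(\kappa,\ve)\,c_4(\kappa,\ve)$ by Lemma~\ref{L1t} and \eqref{spip}; then $(\mathcal{A}(w)-\mathcal{A}(v))\Phi_w \in W_{2,D}^{-\alpha}(\Omega)$ with norm bounded by $c_6(\kappa,\ve)\,\|v-w\|_{W_q^{2-\xi}(I)}\,\|\Phi_w\|_{W_{2,D}^2(\Omega)}$ via Lemma~\ref{L1c}, and a final application of Lemma~\ref{L1b} (same $\theta = 1-\alpha$) brings it into $W_{2,D}^{2-\alpha}(\Omega)$ with an extra factor $c_5(\kappa,\ve)$. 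Combining the two bounds yields the claim with $c_8(\kappa,\ve) := c_5(\kappa,\ve)\big(c_7(\kappa)\,\ve^2 + c_6(\kappa,\ve)\,c_2(\kappa,\ve)\,c_4(\kappa,\ve)\big)$.

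I do not anticipate a serious obstacle here; the lemma is essentially a bookkeeping consequence of the preceding four lemmas. The only point requiring a little care is the compatibility of the exponents: Lemma~\ref{L1b} demands $\theta \notin \{1/2\}$, so one should assume $\alpha \ne 1/2$ (or, if $\alpha = 1/2$ is needed, interpolate or pass to a slightly larger $\alpha' \in (1/2, 1)$ and use the embedding $W_{2,D}^{2-\alpha'}(\Omega) \hookrightarrow W_{2,D}^{2-\alpha}(\Omega)$ — but since Proposition~\ref{L1} ultimately only needs $\alpha$ in an open range, this causes no loss). One must also verify that the target space $W_{2,D}^{2-\alpha}(\Omega)$ is exactly $W_{2,D}^{\theta+1}(\Omega)$ with $\theta = 1-\alpha$, which it is by definition, and that the boundary trace of $\phi_v - \phi_w$ vanishes (it does, since $\phi_v = \phi_w = \eta$ on $\partial\Omega$), so that working in the $W_{2,D}$-scale is legitimate. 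With these remarks the proof is complete.
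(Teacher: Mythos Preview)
Your proof is correct and follows essentially the same route as the paper: the same resolvent identity $\Phi_v-\Phi_w = \mathcal{A}(v)^{-1}(f_v-f_w) + \mathcal{A}(v)^{-1}(\mathcal{A}(w)-\mathcal{A}(v))\mathcal{A}(w)^{-1}f_w$, the same chain of estimates via Lemmas~\ref{L1b}, \ref{L1c}, \ref{L1d} and \eqref{spip}, with $\theta=1-\alpha$ in Lemma~\ref{L1b}. Your remark about the restriction $\alpha\ne 1/2$ is a valid point that the paper leaves implicit (it is harmless since $\alpha$ is later chosen from an open range).
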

%%%%%%%%%%%%%%%%%%%%%%%%%%%%%%%%%%%%%%%%%%%%%%%%%%%%%%%%

\begin{proof}[{\bf Proof}]
Let $v,w\in \overline{S}_q(\kappa)$ and recall that $\phi_v=\Phi_v +\eta$ with
$\Phi_v=\mathcal{A}(v)^{-1} f_v\in W_{2,D}^{2}(\Omega)$. Then obviously $\phi_v-\phi_w {  = \Phi_v - \Phi_w} \in W_{2,D}^{2-\alpha}(\Omega)$ and so
\begin{equation*}
\begin{split}
\|\phi_v-\phi_w\|_{W_{2,D}^{2-\alpha}(\Omega)}
&=\| \mathcal{A}(v)^{-1} (f_v-f_w) +(\mathcal{A}(v)^{-1} -\mathcal{A}(w)^{-1}) f_w\|_{W_{2,D}^{2-\alpha}(\Omega)} \\
&\le\, 
\| \mathcal{A}(v)^{-1}\|_{\ml(W_{2,D}^{-\alpha}(\Omega),W_{2,D}^{2-\alpha}(\Omega))}\, \| f_v-f_w\|_{W_{2,D}^{-\alpha}(\Omega)}\\
&\quad   + \|\mathcal{A}(v)^{-1}\|_{\ml(W_{2,D}^{-\alpha}(\Omega),W_{2,D}^{2-\alpha}(\Omega))}\, \|\mathcal{A}(v)  -\mathcal{A}(w)\|_{\ml(W_{2,D}^{2}(\Omega),W_{2,D}^{-\alpha}(\Omega))}\\
&\qquad\qquad \qquad\qquad\times \|\mathcal{A}(w)^{-1}\|_{\ml(L_2(\Omega),W_{2,D}^{2}(\Omega))}\, \|f_w\|_{L_2(\Omega)}
\end{split}
\end{equation*}
and the statement follows from \eqref{spip}, Lemma~\ref{L1b}, Lemma~\ref{L1c}, and Lemma~\ref{L1d}.
\end{proof}

\begin{proof}[{\bf Proof of Proposition~\ref{L1}}]
As previously noted, we are left to prove assertion \eqref{gLip}. For this let  {  $q\in (2,\infty)$, $\xi\in [0,1/2)$, and $\nu\in [0,(1-2\xi)/2)$. By \cite[Eq.~(2.30)]{ELW_CPAM}, given $2\sigma\in [0,1/2)$ there is $c_9(\kappa,\ve)$ such that
\bqn\label{q1}
\|\partial_\eta \phi_v(.,1) \|_{W_2^{1/2}(I)} + \| \vert\partial_\eta\phi_v(\cdot,1)\vert^2\|_{W_2^{2\sigma}(I)}\le c_9(\kappa,\ve)\ ,\quad v\in \overline{S}_q(\kappa)\ .
\eqn
Given $v, w \in \overline{S}_q(\kappa)$, it follows from the definition of $g_\ve$ that
\begin{equation*}
\begin{split}
  \|g_\ve(v)-& g_\ve(w)\|_{W_2^\nu(I)} \nonumber\\
&\le  \ve^2 \left\| \frac{\partial_x v + \partial_x w}{(1+v)^2}\, (\partial_xv-\partial_xw)\,  \vert\partial_\eta\phi_v(\cdot,1)\vert^2 \right\|_{W_2^{\nu}(I)} \nonumber\\
& \quad + \left\| \left( 1+\ve^2(\partial_xw)^2 \right)\, \frac{2+v+w}{(1+v)^2 (1+w)^2}\, (v-w)\, \vert\partial_\eta\phi_v(\cdot,1)\vert^2 \right\|_{W_2^{\nu}(I)} \nonumber\\
& \quad + \left\| \frac{1+\ve^2(\partial_xw)^2}{(1+w)^2}\, \left( \partial_\eta\phi_v(\cdot,1)+\partial_\eta\phi_w(\cdot,1) \right)\, \left(  \partial_\eta\phi_v(\cdot,1)- \partial_\eta\phi_w(\cdot,1) \right) \right\|_{W_2^{\nu}(I)} \nonumber\\
& =:  J_1 + J_2 + J_3 \ . \label{q1bis}
\end{split}
\end{equation*}
We now fix $2\sigma\in (\xi+\nu,1/2)$ and $s\in [\nu,1-\xi)$ with $s\ge 1/q$ so that pointwise multiplication
\begin{equation}
W_q^s(I) \cdot W_2^{2\sigma}(I) \hookrightarrow W_2^\nu(I) \label{q1ter}
\end{equation}
is continuous by Theorem~\ref{PM}. Then, 
$$
J_1 \le c\,\ve^2\, \left\| \frac{\partial_x v + \partial_x w}{(1+v)^2}\, (\partial_xv-\partial_xw) \right\|_{W_q^s(I)}\, \left\| \vert\partial_\eta\phi_v(\cdot,1)\vert^2 \right\|_{W_2^{2\sigma}(I)}\ .
$$
We infer from \eqref{q1} and continuity of pointwise multiplication
\bqn\label{q2}
W_q^2(I)\cdot W_q^1(I)\cdot W_q^{1-\xi}(I) \hookrightarrow W_q^s(I)
\eqn
guaranteed by Theorem~\ref{PM} that
$$
J_1 \le c(\kappa,\ve)\, \left\|\frac{1}{(1+v)^2}\right\|_{W_q^2(I)}\, \|\partial_xv+\partial_xw\|_{W_q^1(I)}\, \|\partial_xv-\partial_xw\|_{W_q^{1-\xi}(I)}\ ,
$$
hence, since both $v$ and $w$ belong to $\overline{S}_q(\kappa)$ and $W_q^2(I)$ is an algebra, 
\begin{equation}
J_1 \le c(\kappa,\ve)\, \| v - w \|_{W_q^{2-\xi}(I)}\ . \label{q1quattro}
\end{equation}
Using again the continuity \eqref{q1ter} and \eqref{q2} of pointwise multiplication as well as \eqref{q1}, we obtain
$$
J_2 \le c(\kappa,\ve)\, \left\| 1+\ve^2(\partial_xw)^2 \right\|_{W_q^1(I)}\, \left\| \frac{2+v+w}{(1+v)^2 (1+w)^2}\right\|_{W_q^2(I)}\, \|v-w\|_{W_q^{1-\xi}(I)} \ .
$$
As $W_q^1(I)$ and $W_q^2(I)$ are algebras, we deduce from the properties of $\overline{S}_q(\kappa)$ that
\begin{equation}
J_2 \le c(\kappa,\ve)\, \| v - w \|_{W_q^{2-\xi}(I)}\ . \label{q1cinque}
\end{equation}
Finally, fix $\alpha\in(\xi, 2\sigma-\nu)$. Invoking once more Theorem~\ref{PM} gives continuity of pointwise multiplication
\bqn\label{q3} 
W_q^1(I)\cdot W_2^{1/2}(I)\cdot W_2^{1/2-\alpha}(I) \hookrightarrow W_2^\nu(I)\ ,
\eqn
and thus
$$
J_3 \le \left\|\frac{1+\ve^2(\partial_xw)^2}{(1+w)^2}\right\|_{W_q^1(I)}\,   \| \partial_\eta\phi_v(\cdot,1)+\partial_\eta\phi_w(\cdot,1)\|_{W_2^{1/2}(I)}\, \| \partial_\eta\phi_v(\cdot,1)-\partial_\eta\phi_w(\cdot,1)\|_{W_2^{1/2-\alpha}(I)}\ .
$$
Since $w\in \overline{S}_q(\kappa)$, it follows from \eqref{q1}, the properties of the trace operator \cite[Thm.~1.5.1.1]{Grisvard}, and Lemma~\ref{L2a} that
\begin{equation}
J_3 \le c(\kappa,\ve)\, \left\| \phi_v - \phi_w \right\|_{W_{2,D}^{2-\alpha}(\Omega)} \le c(\kappa,\ve)\, \|v-w\|_{W_{q}^{2-\xi}(I)}\ . \label{q1sei}
\end{equation}
Gathering \eqref{q1quattro}, \eqref{q1cinque}, and \eqref{q1sei} completes the proof of Proposition~\ref{L1}.}
\end{proof}

%%%%%%%%%%%%%%%%%%%%%%%%%%%%%%%%%%%%%%%%%%%%%%%%%%%%%%%%
%%%%%%%%%%%%%%%%%%%%%%%%%%%%%%%%%%%%%%%%%%%%%%%%%%%%%%%%
\section{Well-Posedness of the Evolution Problem: Proof of Theorem~\ref{A} }\label{Sec3} 
%%%%%%%%%%%%%%%%%%%%%%%%%%%%%%%%%%%%%%%%%%%%%%%%%%%%%%%%
%%%%%%%%%%%%%%%%%%%%%%%%%%%%%%%%%%%%%%%%%%%%%%%%%%%%%%%%

\medskip

We next focus our attention on the quasilinear curvature part of equation \eqref{u}. Let us first note that
\bqn\label{curv}
\partial_x\left(\frac{\partial_x u}{\sqrt{1+\varepsilon^2(\partial_x u)^2}}\right)= \frac{1}{\big(1+\varepsilon^2(\partial_x u)^2\big)^{3/2}}\, \partial_x^2 u\ ,
\eqn
which motivates the definition of
\begin{equation}
A(w)v:=-\frac{1}{\big(1+(\partial_x w)^2\big)^{3/2}}\,\partial_x^2 v\ ,\quad v\in W_{q,D}^2(I)\ , \label{opA}
\end{equation}
where $w$ belongs, for $q\in (2,\infty)$, $\kappa\in (0,1)$, and $\xi\in (0,(q-1)/q)$ given, to the set
$$
Z_q(\kappa):=\{w\in W_q^{2-\xi}(I)\,;\, \|w\|_{W_q^{2-\xi}(I)}\le 1/\kappa\}\ .
$$
Note that $W_q^{2-\xi}(I)\hookrightarrow C^1([-1,1])$. Also observe that $-A(\ve u)u$ coincides with \eqref{curv}. For $\omega>0$ and $k\ge 1$,  let
$\mathcal{H}(W_{q,D}^2(I),L_q(I);k,\omega)$ be the set of all $\mathcal{A}\in\mathcal{L}(W_{q,D}^2(I),L_q(I))$ such that $\omega+\mathcal{A}$ is an isomorphism from $W_{q,D}^2(I)$ onto $L_q(I)$ and satisfies the resolvent estimates
$$
\frac{1}{k}\,\le\,\frac{\|(\mu+\mathcal{A})z\|_{L_q(I)}}{\vert\mu\vert \,\| z\|_{L_q(I)}+\|z\|_{W_{q,D}^2(I)}}\,\le \, k\ ,\quad Re\, \mu\ge \omega\ ,\quad z\in W_{q,D}^2(I)\setminus\{0\}\ .
$$
Then $\mathcal{A}\in\mathcal{H}(W_{q,D}^2(I),L_q(I);k,\omega)$ implies {  that $\mathcal{A}\in\mathcal{H}(W_{q,D}^2(I),L_q(I))$, that is,} $-\mathcal{A}$  generates an analytic semigroup on $L_q(I)$ with domain $W_{q,D}^2(I)$, see \cite[I.Thm.1.2.2]{LQPP}. 

%%%%%%%%%%%%%%%%%%%%%%%%%%%%%%%%%%%%%%%%%%%%%%%%%%%%%%%%
\begin{lem}\label{SG}
Let $q\in (2,\infty)$, $\kappa\in (0,1)$, and $\xi\in (0,(q-1)/q)$. Then there are $k:=k(\kappa)\ge 1$ {and $\omega:=\omega(\kappa) >0$} such that, for each $w\in Z_q(\kappa)$,
$$
{-2\omega+A(w)\in \mathcal{H}(W_{q,D}^2(I),L_q(I);k,\omega)}
$$
and $A(w)$ is resolvent positive. Moreover, there is a constant $\ell(\kappa)>0$ such that
\bqn\label{esti}
\| A(w_1)-A(w_2)\|_{\ml(W_{q,D}^2(I),L_q(I))}\le \ell(\kappa) \, \|w_1-w_2\|_{W_q^{2-\xi}(I)}\ ,\quad w_1, w_2\in Z_q(\kappa)\ .
\eqn
\end{lem}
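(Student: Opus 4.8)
The plan is to treat $A(w)$ as a small (relatively bounded) perturbation of the constant-coefficient operator $-\partial_x^2$ on $L_q(I)$ with domain $W_{q,D}^2(I)$, uniformly over $w\in Z_q(\kappa)$. First I would record that, since $W_q^{2-\xi}(I)\hookrightarrow C^1([-1,1])$ with an embedding constant depending only on $q$ and $\xi$, the coefficient
$$
a_w(x):=\frac{1}{\big(1+(\partial_x w(x))^2\big)^{3/2}}
$$
satisfies, for every $w\in Z_q(\kappa)$,
$$
\frac{1}{\big(1+c(\kappa)^2\big)^{3/2}}\le a_w(x)\le 1\ ,\quad x\in I\ ,
$$
where $c(\kappa)$ bounds $\|w\|_{C^1}$; so $a_w$ is bounded above and below by positive constants depending only on $\kappa$ (and $q,\xi$). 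Thus $A(w)=-a_w\,\partial_x^2$ is, for each $w\in Z_q(\kappa)$, a uniformly elliptic second-order operator on $I$ with Dirichlet boundary conditions, with ellipticity constants controlled by $\kappa$.

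Next I would invoke the standard generation theory: for a uniformly elliptic divergence-or-nondivergence form operator on a bounded interval with $L_\infty$ (indeed continuous) leading coefficient bounded away from zero, one has $-a_w\partial_x^2\in\mathcal{H}(W_{q,D}^2(I),L_q(I))$, and, more quantitatively, the resolvent estimates defining $\mathcal{H}(W_{q,D}^2(I),L_q(I);k,\omega)$ hold with $k$ and $\omega$ depending only on the ellipticity bounds and on $\|a_w\|_{L_\infty}$, hence only on $\kappa$ (and $q,\xi$); see \cite[I.Thm.1.2.2]{LQPP} together with the standard $L_q$-resolvent estimates for elliptic operators, or alternatively \cite{Amann} as used in the semilinear paper \cite{ELW_CPAM}. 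This is where the uniform $k(\kappa),\omega(\kappa)$ come from; shifting by $-2\omega$ (so that $-2\omega+A(w)$ rather than $A(w)$ lies in the class with the stated $\omega$) is a harmless bookkeeping device to accommodate the possibly nonpositive shift. Resolvent positivity of $A(w)$ is immediate from the maximum principle: $(\mu+A(w))v=f$ with $f\ge 0$ and $\mu$ large forces $v\ge0$, since $a_w>0$; equivalently, $-A(w)$ generates a positive analytic semigroup, again because $a_w>0$ pointwise.

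For the Lipschitz estimate \eqref{esti}, given $w_1,w_2\in Z_q(\kappa)$ and $v\in W_{q,D}^2(I)$, I would write
$$
\big(A(w_1)-A(w_2)\big)v=-\big(a_{w_1}-a_{w_2}\big)\,\partial_x^2 v\ ,
$$
and estimate $\|(A(w_1)-A(w_2))v\|_{L_q(I)}\le \|a_{w_1}-a_{w_2}\|_{L_\infty(I)}\,\|\partial_x^2 v\|_{L_q(I)}\le \|a_{w_1}-a_{w_2}\|_{L_\infty(I)}\,\|v\|_{W_{q,D}^2(I)}$. It then remains to bound $\|a_{w_1}-a_{w_2}\|_{L_\infty(I)}$ by $\ell(\kappa)\|w_1-w_2\|_{W_q^{2-\xi}(I)}$: since $s\mapsto (1+s^2)^{-3/2}$ is globally Lipschitz on $\R$ with Lipschitz constant at most $3/(2\cdot 2^{3/2})\cdot$(a universal constant) — in any case finite and independent of $\kappa$ — we get $|a_{w_1}(x)-a_{w_2}(x)|\le C\,|\partial_x w_1(x)-\partial_x w_2(x)|$ pointwise, whence $\|a_{w_1}-a_{w_2}\|_{L_\infty(I)}\le C\,\|\partial_x(w_1-w_2)\|_{L_\infty(I)}\le C\,c(\kappa)\,\|w_1-w_2\|_{W_q^{2-\xi}(I)}$ by the embedding $W_q^{2-\xi}(I)\hookrightarrow C^1([-1,1])$ once more. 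Combining yields \eqref{esti} with $\ell(\kappa):=C\,c(\kappa)$.

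The only genuinely nontrivial input is the \emph{uniformity} of the generation constants $k(\kappa),\omega(\kappa)$ over the whole set $Z_q(\kappa)$, i.e.\ that the resolvent estimates depend on the coefficient $a_w$ only through its upper and lower bounds (and not through, say, a modulus of continuity). In one space dimension this is classical — it follows from the explicit structure of the resolvent on an interval, or from the fact that in the $\mathcal{H}(W_{q,D}^2,L_q;k,\omega)$ characterization the constants can be chosen to depend continuously on the principal symbol in $L_\infty$; the same uniformity was already exploited (in the semilinear setting) in \cite{ELW_CPAM}, and I would simply cite that together with \cite[I.Thm.1.2.2]{LQPP}. Everything else in the proof is the elementary Lipschitz-in-$L_\infty$ calculation above.
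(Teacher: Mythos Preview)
Your argument is correct in outline, and the Lipschitz estimate \eqref{esti} is handled exactly as in the paper: write $(A(w_1)-A(w_2))v=-(a_{w_1}-a_{w_2})\partial_x^2 v$, bound the coefficient in $L_\infty$, and invoke $W_q^{2-\xi}(I)\hookrightarrow C^1([-1,1])$.

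The difference lies in how the uniform generation constants $k(\kappa),\omega(\kappa)$ are obtained. You outsource this to ``standard $L_q$-resolvent estimates for elliptic operators'' with constants depending only on the ellipticity bounds. The paper instead gives a short self-contained argument: writing $W:=(1+(\partial_x w)^2)^{3/2}$, it tests the eigenvalue equation $\mu\varphi-W^{-1}\partial_x^2\varphi=0$ against $W\bar\varphi$ to conclude that the spectrum of $-A(w)$ lies in $(-\infty,-\mu_1/\|W\|_\infty]$ with $\mu_1=\pi^2/4$, hence in $(-\infty,-\mu_1/c(\kappa)]$ uniformly in $w\in Z_q(\kappa)$; it then tests the resolvent equation $(\mu-2\omega)u-W^{-1}\partial_x^2 u=F$ against $W|u|^{q-2}\bar u$ (as in \cite[Prop.~2.4.2]{LunardiInternetSeminar}) to obtain the $L_q$-resolvent bound with a constant depending only on $\kappa$, and finishes via \cite[I.~Rem.~1.2.1(a)]{LQPP}. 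This yields the explicit value $\omega(\kappa)=\mu_1/(2^{1+q/2}c(\kappa))$. Your black-box route is legitimate, but the paper's hands-on computation is what actually makes the $\kappa$-dependence transparent and avoids having to track constants through an abstract reference.

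One correction: your appeal to \cite{ELW_CPAM} for the uniformity is misplaced. In the semilinear problem treated there the operator is simply $-\partial_x^2$, independent of $w$, so no uniformity-in-$w$ issue arises and that paper cannot be cited for this point. If you want to keep the black-box approach, you should cite a source that genuinely gives $L_q$-resolvent constants depending only on the $L_\infty$ ellipticity bounds of the coefficient (this is indeed available in one dimension, or can be read off from the testing argument above).
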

%%%%%%%%%%%%%%%%%%%%%%%%%%%%%%%%%%%%%%%%%%%%%%%%%%%%%%%%

\begin{proof}[{\bf Proof}]
Note that the continuous embedding $W_q^{2-\xi}(I)\hookrightarrow C^1([-1,1])$ ensures the existence of $c(\kappa)>1$ such that
\bqn\label{ck}
1\le (1+(\partial_x w)^2)^{3/2}< c(\kappa)\ ,\quad w\in Z_q(\kappa)\ .
\eqn
Let $w\in Z_q(\kappa)$ be fixed and put $W:= (1+(\partial_x w)^2)^{3/2}$.
Since e.g. \cite[Rem.4.2(c)]{AmannTeubner} ensures that $-A(w)$ generates an analytic semigroup on $L_q(I)$, the operator $\mu+A(w)$ is boundedly invertible for each $\mu\in\C$ with sufficiently large real part. Clearly, $\mu$ belongs to the spectrum of $-A(w)$ only if it is an eigenvalue, and if $\varphi\in W_{q,D}^2(I,\C)$ is a corresponding eigenvector, then testing the equation 
$$
\mu \varphi-\frac{1}{W}\partial_x^2\varphi=0
$$ 
with $ W\bar{\varphi}$ readily gives $\mu \in (-\infty,0 )$ and $\mu\le -\mu_1/\| W\|_\infty$, where $\mu_1:=\pi^2/4$ denotes the principal eigenvalue of $-\partial_x^2$ subject to Dirichlet boundary conditions. Consequently, due to \eqref{ck} the resolvent set of $-A(w)$ contains the half plane $[\mathrm{Re}\,\mu \ge -\mu_1/c(\kappa)]$. Put $\omega:=\mu_1/(2^{1+q/2} c(\kappa))>0$. Then $\omega$ is independent of $w$ and $-\omega+A(w)$ is an isomorphism from $W_{q,D}^2(I)$ onto $L_q(I)$. Moreover, if $F\in L_q(I,\C)$ and $\mathrm{Re}\,\mu >0$, then the equation
$$
\mu u-  2\omega  u-\frac{1}{W}\partial_x^2u=F
$$ 
has a unique solution $u\in W_{q,D}^2(I,\C)$. Testing this equation by $W\vert u\vert^{q-2}\bar{u}$ and using \eqref{ck}  yield the resolvent estimate  (see, e.g. the proof of \cite[Prop.2.4.2]{LunardiInternetSeminar} for details)
$$
\|(\mu-{  2\omega}+A(w))^{-1}\|_{\ml(L_q(I))}{  %\le \frac{\sqrt{1+\frac{q^2}{4}}}{\vert\mu\vert}\,\| W\|_\infty 
\le} \frac{c'(\kappa)}{\vert\mu\vert} \ ,
$$
where $c'(\kappa)>0$ is a constant independent of $w$.
Since we clearly have by \eqref{ck}
$$
\|-  2\omega+A(w)\|_{\ml(W_{q,D}^2(I),L_q(I))}\le   c'(\kappa)\ ,
$$
it follows from \cite[I.Rem.1.2.1(a)]{LQPP} that 
$$
- 2\omega+A(w)\in \mathcal{H}(W_{q,D}^2(I),L_q(I);k, \omega)
$$
for some $k:=k(\kappa)\ge 1$. That $A(w)$ is resolvent positive and Lipschitz property \eqref{esti} are readily seen, the latter being a consequence of the continuous embedding $W_q^{2-\xi}(I)\hookrightarrow C^1([-1,1])$.
\end{proof}

If $w=w(t)$ is H\"older continuous in $t$, then $A(w)$ generates a parabolic evolution operator $U_{A(w)}(t,s)$ on $L_q(I)$ in the sense of \cite[Sect.~II]{LQPP}. More precisely:

%%%%%%%%%%%%%%%%%%%%%%%%%%%%%%%%%%%%%%%%%%%%%%%%%%%%%%%%
\begin{prop}\label{ES}
Let $q\in (2,\infty)$, $\kappa\in (0,1)$, and $\xi\in (0,(q-1)/q)$. Let {$\omega(\kappa)>0$ and} $\ell(\kappa)>0$ be as in Lemma~\ref{SG} and define, for $\rho\in (0,1)$ and $N,\tau>0$ given,
\begin{equation*}
\begin{split}
\mathcal{W}_\tau(\kappa):=\{w\in C([0,\tau],W_{q,D}^{2-\xi}(I))\,;\,\|&w(t)-w(s)\|_{W_{q,D}^{2-\xi}(I)}\le\frac{N}{\ell(\kappa)} \vert t-s\vert^\rho\\
&\text{and  } w(t)\in Z_q(\kappa)\ \text{for } 0\le t,s\le \tau\}\ .
\end{split}
\end{equation*}
Then, there is a constant $c_0(\rho)>0$ being independent of $N$ and $\tau$ such that the following is true: for each  $w\in \mathcal{W}_\tau(\kappa)$, there exists a unique parabolic evolution operator $U_{A(w)}(t,s)$, $0\le s\le t\le \tau$, possessing $W_{q,D}^{2}(I)$ as a regularity subspace, and
$$
\|U_{A(w)}(t,s)\|_{\mathcal{L}(W_{q,D}^{2\alpha}(I),W_{q,D}^{2\beta}(I))} \le c_*(\kappa)\, (t-s)^{\alpha-\beta}\, e^{-\vartheta (t-s)}\ ,\quad 0\le s < t\le \tau\ ,
$$
for $0\le \alpha\le\beta\le 1$ with $2\alpha, 2\beta\not= 1/q$, where the constant $c_*(\kappa)\ge 1$ depends on $N$, $\alpha$, and $\beta$ but is independent of~$\tau$, and
\bqn\label{vartheta}
-\vartheta:=-{  \vartheta(\kappa, \rho, N)}:=c_0(\rho) N^{1/\rho}-{ \omega(\kappa)}\ .
\eqn 
Moreover, $U_{A(w)}(t,s)\in \ml(L_q(I))$ is  a positive operator for $0\le s\le t\le \tau$.
\end{prop}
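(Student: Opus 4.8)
The plan is to deduce Proposition~\ref{ES} from the general theory of parabolic evolution operators associated to families of generators of analytic semigroups with H\"older-continuous dependence on time, as developed in \cite[Sect.~II]{LQPP}. The key structural input is Lemma~\ref{SG}: for every $w\in Z_q(\kappa)$ the operator $-2\omega(\kappa)+A(w)$ lies in the class $\mathcal{H}(W_{q,D}^2(I),L_q(I);k(\kappa),\omega(\kappa))$ with constants depending only on $\kappa$, together with the uniform Lipschitz estimate \eqref{esti}.

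First I would fix $w\in \mathcal{W}_\tau(\kappa)$ and consider the time-dependent operator family $t\mapsto A(w(t))$. By Lemma~\ref{SG}, each $-2\omega(\kappa)+A(w(t))$ belongs to $\mathcal{H}(W_{q,D}^2(I),L_q(I);k(\kappa),\omega(\kappa))$, and by \eqref{esti} combined with the defining H\"older bound $\|w(t)-w(s)\|_{W_{q,D}^{2-\xi}(I)}\le (N/\ell(\kappa))|t-s|^\rho$ we get
\[
\|A(w(t))-A(w(s))\|_{\ml(W_{q,D}^2(I),L_q(I))}\le N\,|t-s|^\rho\ ,\qquad 0\le s,t\le\tau\ ,
\]
so the family has uniform H\"older constant $N$ (independent of $\tau$). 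This is exactly the hypothesis needed to invoke \cite[II.Cor.4.4.2]{LQPP} (or the analogous statement therein): there is a unique parabolic evolution operator $U_{A(w)}(t,s)$, $0\le s\le t\le\tau$, solving $\partial_t U = -A(w(t))U$, $U(s,s)=\mathrm{id}$, and admitting $W_{q,D}^2(I)$ as a regularity subspace. Then I would quote the interpolation–extrapolation smoothing estimates from \cite{LQPP} to obtain
\[
\|U_{A(w)}(t,s)\|_{\ml(W_{q,D}^{2\alpha}(I),W_{q,D}^{2\beta}(I))}\le c_*(t-s)^{\alpha-\beta}e^{-\vartheta(t-s)}
\]
for $0\le\alpha\le\beta\le 1$ with $2\alpha,2\beta\ne 1/q$ (the restriction away from $1/q$ being needed so that the interpolation spaces of the family coincide with the spaces $W_{q,D}^{2\theta}(I)$ of \eqref{wap}); the constant $c_*$ depends on $N,\alpha,\beta$ but not on $\tau$ precisely because all the bounds above are $\tau$-independent. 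The exponential rate $\vartheta$ arises because the generators sit in $\mathcal{H}(\cdot;k,\omega)$ with spectral bound controlled by $-\omega(\kappa)$, shifted by a term $c_0(\rho)N^{1/\rho}$ coming from the H\"older perturbation in the standard construction, giving \eqref{vartheta}. Positivity of $U_{A(w)}(t,s)$ on $L_q(I)$ follows from the resolvent positivity of each $A(w(t))$ asserted in Lemma~\ref{SG}, via the Trotter-type product approximation of the evolution operator by products of resolvents (or semigroups generated by the frozen-coefficient operators), each of which is positive.

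The main obstacle, and the point requiring the most care, is tracking the $\tau$-independence of all constants and the precise form of the rate $\vartheta$ in \eqref{vartheta}. The construction of $U_{A(w)}(t,s)$ in \cite{LQPP} proceeds by a Picard iteration / Volterra series whose convergence and resulting bounds depend on the H\"older constant $N$ and the spectral angle data $(k,\omega)$; one must verify that the relevant series and the attendant smoothing estimates can be closed on arbitrarily long intervals $[0,\tau]$ with a decay rate $-\vartheta$ that deteriorates from $-\omega(\kappa)$ only by the explicit amount $c_0(\rho)N^{1/\rho}$, with $c_0(\rho)$ depending on $\rho$ alone. This is a matter of carefully bookkeeping the constants in the perturbation argument rather than a conceptual difficulty, and the statement is set up so that the cited results from \cite{LQPP} apply with the uniform data extracted from Lemma~\ref{SG}.
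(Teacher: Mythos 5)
Your proposal is correct and follows essentially the same route as the paper's proof: translate the Hölder continuity of $w$ via the Lipschitz estimate \eqref{esti} into the uniform Hölder bound $\|A(w(t))-A(w(s))\|_{\ml(W_{q,D}^2,L_q)}\le N|t-s|^\rho$, note that Lemma~\ref{SG} places $-2\omega(\kappa)+A(w(t))$ uniformly in $\mathcal{H}(W_{q,D}^2(I),L_q(I);k(\kappa),\omega(\kappa))$, and then invoke the evolution-operator theory of \cite{LQPP} (the paper uses \cite[II.Thm.~5.1.1, Lem.~5.1.3, Thm.~6.4.2]{LQPP} together with the interpolation-space identifications in \cite{Grisvard69,Seeley}, which account for the restriction $2\alpha,2\beta\neq 1/q$ you flagged) to obtain existence, the $\tau$-independent smoothing estimate with rate $-\vartheta=c_0(\rho)N^{1/\rho}-\omega(\kappa)$, and positivity from the resolvent positivity in Lemma~\ref{SG}.
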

%%%%%%%%%%%%%%%%%%%%%%%%%%%%%%%%%%%%%%%%%%%%%%%%%%%%%%%%

\begin{proof}[{\bf Proof}]
Notice that, for each  $w\in \mathcal{W}_\tau(\kappa)$,
\bqn\label{lqpp1}
A(w)\in C^\rho([0,\tau],\ml(W_{q,D}^{2}(I),L_q(I)))\ ,\quad { -2\omega(\kappa)+A(w)\subset \mathcal{H}(W_{q,D}^2(I),L_q(I);k(\kappa),\omega(\kappa))}
\eqn
with 
\bqn\label{lqpp2}
\sup_{0\le s<t\le \tau}\frac{\| A(w(t))-A(w(s))\|_{\ml(W_{q,D}^2(I),L_q(I))}}{\vert t-s\vert^\rho}\le N
\eqn
 by Lemma~\ref{SG}. Hence, the assertion follows from \cite[II.~Thm.~5.1.1, Lem.~5.1.3, Thm.~6.4.2]{LQPP} and the interpolation results of~\cite{Grisvard69, Seeley}.
\end{proof}

We are now in a position to prove Theorem~\ref{A}.

\begin{proof}[{\bf Proof of Theorem~\ref{A}}]

Using the previously introduced notation, we may rewrite \eqref{33} subject to the boundary condition \eqref{bcu} and the initial condition \eqref{ic} as an abstract $\lambda$-dependent quasilinear Cauchy problem of the form
\bqn\label{CP}
\frac{\rd}{\rd t}u+A(\ve u)u=-\lambda g_\ve(u)\ ,\quad t>0\ ,\qquad u(0)=u^0\ .
\eqn
Existence of solutions to \eqref{CP} is based on applying a fixed point argument.

Let $\lambda> 0$ and $q\in (2,\infty)$. For simplicity we restrict to the case  $\varepsilon\in (0,1]$. A similar proof works for $\ve>1$ by changing some of the constants occurring in the proof. Let us consider an initial value $u^0\in  W_{q,D}^2(I)$  with $u^0(x)>-1$ for $x\in I$. Clearly, there is $\kappa\in (0,1/2)$ with 
\bqn\label{44}
u^0\in  \overline{S}_q(2\kappa)\ ,\quad \|u^0\|_{W_{q,D}^{2-\xi}(I)}\le \frac{1}{2\kappa}\ ,
\eqn
where we fix $\xi$ and $\sigma$ such that
$$
0<\xi<\frac{1}{q}\ ,\quad 0< \frac{1}{2}-\frac{1}{q}<2\sigma<\frac{1}{2}-\xi\ .
$$ 
Let $4\rho\in (0,\xi)$, let $c_0(\rho)>0$ be as in Proposition~\ref{ES} and choose then $N>0$ with the property that $-\vartheta:=c_0(\rho) N^{1/\rho}- \omega(\kappa)<0$ in \eqref{vartheta}. Since  
\bqn\label{embb}
W_{2,D}^{2\sigma}(I)\hookrightarrow W_{q,D}^{2\sigma-\frac{1}{2}+\frac{1}{q}}(I)\hookrightarrow L_q(I)\ ,
\eqn
it follows from Proposition~\ref{ES} (with $(\alpha,\beta)=(1,1)$ and ($\alpha,\beta)=(\sigma-(q-2)/4q,1)$) that, for $w\in\mathcal{W}_\tau(\kappa)$ fixed,
\bqn\label{UA}
\|U_{A( w)}(t,s)\|_{\mathcal{L}(W_{q,D}^{2}(I))} + {  (t-s)^{-\sigma+1+\frac{1}{2}(\frac{1}{2}-\frac{1}{q})}}\, \|U_{A( w)}(t,s)\|_{\mathcal{L}(W_{2,D}^{2\sigma}(I),W_{q,D}^{2}(I))} \le c_*(\kappa)\, \, e^{-\vartheta (t-s)}\ ,
\eqn
for $0\le s \le t\le \tau$, where the constant $c_*(\kappa)\ge 1$ is independent of $w$ and $\tau>0$.
Now, set
\begin{equation*}
\mathcal{V}_\tau(\kappa) := \Big\{v\in \mathcal{W}_\tau(\kappa)\,;\,  \|v(t)\|_{W_{q,D}^2(I)}\le \frac{1}{\kappa_0}\ \text{and }v(t)\ge -1+\kappa\ \text{for } 0\le t,s\le \tau\Big\}\ ,
\end{equation*}
with $\kappa_0:= \kappa/c_*(\kappa)\le \kappa$ and $\tau>0$ to be chosen later and observe that, when endowed with the topology of $C([0,\tau],W_{q,D}^{2-\xi}(I))$, $\mathcal{V}_\tau(\kappa)$ is a complete metric space. In addition, since $\kappa_0\le \kappa$, we have $v(t)\in \overline{S}_q(\kappa)$ for all $t\in 0,\tau]$ and $v\in \mathcal{V}_\tau(\kappa)$. It is, moreover, worthwhile to point out that $\ve v\in \mathcal{V}_\tau(\kappa)$ for $v\in \mathcal{V}_\tau(\kappa)$ since $\ve\in (0,1]$.\footnote{As already mentioned, the case $\ve>1$ can be handled by taking different values of $\kappa$ and $N$ in the definition of $\mathcal{V}_\tau(\kappa)$.}
By Proposition~\ref{L1} there is $c_{1}(\kappa,\varepsilon)>0$ such that
\bqn\label{10}
\|g_\ve(v)-g_\ve(w)\|_{W_{2,D}^{2\sigma}(I)}\le c_{1}(\kappa,\varepsilon)\ \|v-w\|_{W_{q,D}^{2-\xi}(I)}\ ,\quad v, w\in  \overline{S}_q\left( \kappa \right)\ ,
\eqn
and
\bqn\label{10a}
\|g_\ve(v)\|_{W_{2,D}^{2\sigma}(I)}\le  c_{1}(\kappa,\varepsilon)\ ,\quad v\in { \overline{S}_q}\left( \kappa \right)\ .
\eqn
We then claim that
$$
\Lambda(v)(t):=U_{A(\ve v)}(t,0)\,u^0-\lambda\int_0^t U_{A(\ve v)}(t,s)\, g_\ve\big(v(s)\big)\,\rd s\ ,\quad t\in [0,\tau]\ ,\quad v\in \mathcal{V}_\tau(\kappa)\ ,
$$
defines a contraction from $\mathcal{V}_\tau(\kappa)$ into itself if either $\lambda>0$ is arbitrary and $\tau= \tau(\kappa,\lambda)>0$ is sufficiently small, or $\lambda>0$ is sufficiently small and $\tau>0$ is arbitrary. To see this let $v, w$ be arbitrary elements of $\mathcal{V}_\tau(\kappa)$ and let $t\in [0,\tau]$. Since {$U_{A(\ve v)}(t,0)$} is a positive operator and $u^0\ge -1+2\kappa$, it follows from the embedding $W_{q}^{2}(I)\hookrightarrow L_\infty(I)$ with constant $2$, \eqref{UA}, and \eqref{10a} that
\begin{align}
\Lambda(v)(t)\ge & -1 + 2\kappa - 2\,\lambda\,  \int_0^t \left\|  U_{A(\ve v)}(t,s)\, g_\ve(v(s)) \right\|_{W_{q,D}^2(I)}\, \rd s \nonumber \\
\ge & -1+2\kappa -  2\,\lambda\, c_*(\kappa)\,  \ \int_0^t e^{-\vartheta (t-s)}\ (t-s)^{\sigma-1-\frac{1}{2}(\frac{1}{2}-\frac{1}{q})}\ \|g_\ve(v(s))\|_{W_{2,D}^{2\sigma}(I)}\,\rd s \nonumber\\
\ge & -1+2\kappa -  2\,\lambda\,  c_*(\kappa)\, c_{1}(\kappa,\varepsilon) \int_0^\tau e^{-\vartheta  s}\ s^{\sigma-1-\frac{1}{2}(\frac{1}{2}-\frac{1}{q})}\,\rd s \ , \label{zui}
\end{align}
while \eqref{UA} and \eqref{10a} ensure that
\begin{align}
\| \Lambda(v)(t) \|_{W_{q,D}^{2}(I)} & \le  c_*(\kappa)\, \|u^0\|_{W_{q,D}^2(I)} \nonumber\\
&\quad + \lambda\, c_*(\kappa) \int_0^t e^{-\vartheta(t-s)}\ (t-s)^{\sigma-1-\frac{1}{2}(\frac{1}{2}-\frac{1}{q})}\ \|g_\ve(v(s))\|_{W_{2,D}^{2\sigma}(I)}\,\rd s \nonumber \\
&\le  \frac{c_*(\kappa)}{2\kappa}+\lambda\ c_*(\kappa)\ c_{1}(\kappa,\varepsilon) \int_0^\tau e^{-\vartheta  s}\ s^{\sigma-1-\frac{1}{2}(\frac{1}{2}-\frac{1}{q})}\,\rd s\ .\label{11}
\end{align}
Moreover, since $\xi>0$ and owing to \eqref{lqpp1}, \eqref{lqpp2}, and \cite[II.~Thm.~5.2.1]{LQPP} (with the choice $\beta=1-\xi/2$, $\alpha=1$, and $2\gamma=2\sigma-1/2+1/q$ of the parameters therein) there is a number $n_*(\kappa)>0$ such that 
\begin{equation*}
\begin{split}
&\|\Lambda(v)(t)-\Lambda(w)(t)\|_{W_{q,D}^{2-\xi}(I)}\\
 &\le n_*(\kappa)\, e^{-\vartheta t}\,\Bigg\{\lambda\, t^\frac{\xi}{2}\, \|g_\ve(v)-g_\ve(w)\|_{L_\infty((0,t), L_q(I))} \\
 & +t^\frac{\xi}{2}\, \|A(\ve v)-A(\ve w)\|_{C([0,\tau],\ml(W_{q,D}^2(I),L_q(I)))}\,\left[\|u^0\|_{W_{q,D}^2(I)}\,+\,\lambda\, t^{\sigma-\frac{1}{2}(\frac{1}{2}-\frac{1}{q})}\, \|g_\ve(v)\|_{L_\infty((0,t), W_{q,D}^{2\sigma-1/2+1/q}(I))}\right]     \Bigg\}\\
 &\le
  \lambda\, n_*(\kappa)\, t^\frac{\xi}{2}\, e^{-\vartheta t}\, c_1(\kappa,\ve)\, \|v-w\|_{\mathcal{V}_\tau(\kappa)}\\
 & \quad+n_*(\kappa)\,  t^\frac{\xi}{2}\, e^{-\vartheta t} \, \ell(\kappa)\, \|v-w\|_{\mathcal{V}_\tau(\kappa)}\,\left[\|u^0\|_{W_{q,D}^2(I)}\,+\,\lambda\, t^{\sigma-\frac{1}{2}(\frac{1}{2}-\frac{1}{q})}\, c_1(\kappa,\ve)\right]\ ,
\end{split}
\end{equation*}
where we used \eqref{esti}, \eqref{embb}, \eqref{10}, and \eqref{10a}  for the second inequality (recall that $\|\cdot\|_{\mathcal{V}_\tau(\kappa)} = \|\cdot\|_{C([0,\tau],W^{2-\xi}_q(I)}$). Thus, there is $c(\kappa)>0$ such that
\begin{equation}
\begin{split}\label{12}
\|\Lambda(v)(t)-\Lambda(w)(t)\|_{W_{q,D}^{2-\xi}(I)}
 &\le c(\kappa)\, \left\{ \left(\max_{0\le r\le\tau} r^\frac{\xi}{2} e^{-\vartheta r}\right)\, \left[\|u^0\|_{W_{q,D}^2(I)}+\lambda\right]\right.\\
 &\qquad\left. +\,\lambda\, \left(\max_{0\le r\le\tau} r^{\frac{\xi}{2}+\sigma-\frac{1}{2}(\frac{1}{2}-\frac{1}{q})}e^{-\vartheta r}\right)\, \right\}\,  \|v-w\|_{\mathcal{V}_\tau(\kappa)}
  \ .
\end{split}
\end{equation}
We next observe that \eqref{lqpp1}, \eqref{lqpp2}, and \cite[II.~Thm.~5.3.1]{LQPP} ensure the existence of  a number $m_*(\kappa)>0$ such that, for $0\le s\le t\le \tau$, (since $\xi>0$ and $u^0\in W_{q,D}^2(I)\hookrightarrow W_{q,D}^{2-\xi+4\rho}(I)$)
\bqn\label{122}
\begin{split}
\|\Lambda(v)(t)&-\Lambda(v)(s)\|_{W_{q,D}^{2-\xi}(I)}\\
&\le m_*(\kappa)\,   (t-s)^{2\rho}\, {  e^{-\vartheta t}}\, \left(\|u^0\|_{W_{q,D}^{2-\xi+4\rho}(I)} +\lambda\, \|g_\ve(v)\|_{L_\infty((0,t), W_{2,D}^{2\sigma}(I))}\right)\\
&\le m_*(\kappa)\,  \left(\max_{0\le r\le\tau} r^{\rho}\, e^{-\vartheta r}\right)\, \left(\|u^0\|_{W_{q,D}^{2-\xi+4\rho}(I)} +\lambda\, c_1(\kappa,\ve)\right)\, (t-s)^{\rho}
\end{split}
\eqn
by using \eqref{10a}. Similarly, from \eqref{44},
\bqn\label{122a}
\begin{split}
\|\Lambda(v)(t)\|_{W_{q,D}^{2-\xi}(I)}& \le\|\Lambda(v)(t)-\Lambda(v)(0)\|_{W_{q,D}^{2-\xi}(I)} +\|u^0\|_{W_{q,D}^{2-\xi}(I)}\\
&\le m_*(\kappa)\,   \left(\max_{0\le r\le\tau} r^{2\rho}\, e^{-\vartheta r}\right)\, \left(\|u^0\|_{W_{q,D}^{2-\xi+4\rho}(I)} +\lambda\, c_1(\kappa,\ve)\right)\, + \frac{1}{2\kappa}\ .
\end{split}
\eqn
Gathering \eqref{zui}-\eqref{122a} we see that, for arbitrary $\lambda>0$, we may choose $\tau:=\tau(\kappa,\lambda)>0$ sufficiently small such that the mapping $\Lambda :\mathcal{V}_\tau(\kappa)\rightarrow \mathcal{V}_\tau(\kappa)$ defines a contraction and thus has a unique fixed point $u$ in $\mathcal{V}_\tau(\kappa)$.

Now observe that, owing to Lemma~\ref{SG}, \eqref{embb}, and \eqref{10} we have
$$
\big(A(\ve u),g_\ve(u)\big)\in C^\rho\big([0,\tau],\mathcal{H}(W_{q,D}^2(I),L_q(I))\times W_{q,D}^{2\sigma-\frac{1}{2}+\frac{1}{q}}(I)\big)
$$
where $2\sigma-\frac{1}{2}+\frac{1}{q}>0$,
and $u$ is a mild solution to \eqref{CP} on $[0,\tau]$ with $u^0\in W_{q,D}^2(I)$. Thus,
$$
u\in C^1\big([0,\tau],L_q(I)\big)\cap C\big([0,\tau], W_{q,D}^2(I)\big)
$$ 
is a strong solution to \eqref{CP} by \cite[Thm.~4.2]{AmannTAMS} and \cite[Thm.~10.1]{AmannTeubner}, which then clearly can be extended to a maximal solution on some maximal interval $[0,T_m^\ve)$. This proves part~(i) of Theorem~\ref{A}. Since $\tau$ above depends only on $\kappa$ and $\lambda$, we also obtain part~(ii) of Theorem~\ref{A} while the proof of part~(iii) is the same as in \cite[Thm.~1.1~(iii)]{ELW_CPAM}. To prove part~(iv) of Theorem~\ref{A} we note that, since $\vartheta>0$, there are $\lambda_*(\kappa)>0$ and $r(\kappa)>0$ such that, according to \eqref{zui}-\eqref{122a}, the mapping $\Lambda :\mathcal{V}_\tau(\kappa)\rightarrow \mathcal{V}_\tau(\kappa)$ defines a contraction for each $\tau>0$ provided that  $\lambda\in (0,\lambda_*(\kappa))$ and $\|u^0\|_{W_{q,D}^{2}(I)}\le r(\kappa)$. Thus, in this case there is
a unique fixed point $u$ of $\Lambda$ belonging to $\mathcal{V}_\tau(\kappa)$ for each $\tau>0$. By definition of $\mathcal{V}_\tau(\kappa)$, this implies Theorem~\ref{A}~(iv).
\end{proof}

%%%%%%%%%%%%%%%%%%%%%%%%%%%%%%%%%%%%%%%%%%%%%%%%%%%%%%%%
\begin{rem} \label{Remm}
Compared to the semilinear case investigated in \cite{ELW_CPAM}, where $A$ is independent of $v$, the quasilinear problem \eqref{CP} features a further and not negligible difficulty. Indeed, employing Banach's fixed point theorem in the proof of Theorem~\ref{A} requires two essential ingredients. First, to warrant the existence of a corresponding evolution operator $U_{A(\ve v)}$, the operator $A(\ve v(t))\in\ml (W_{q,D}^2(I),L_q(I))$ has to be a H\"older continuous function of time as shown in \eqref{122}. And second, the mapping $\Lambda=\Lambda(v)$ has to depend Lipschitz continuously (with Lipschitz constant less than 1) on its argument $v$, see \eqref{12}. Both of these properties can be guaranteed only in the topology of $W_{q,D}^{2-\xi}(I)$ for $v=v(t)$ with $\xi>0$ (but not for $\xi=0$), see \cite[II.~Thm.~5.2.1, Thm.~5.3.1]{LQPP}. This issue is the reason for refining \cite[Proposition~2.1]{ELW_CPAM} and deriving \eqref{gLip}. Moreover, unlike the semilinear case the initial value $u^0$ plays an additional role in the quasilinear case when it comes to global existence as becomes apparent from \eqref{12} and \eqref{122}. Finally, it is worthwhile to point out that solving the elliptic problem \eqref{230}, \eqref{240} requires that $v\in W_{q,D}^2(I)$, pointwise with respect to time, rather than $v\in W_{q,D}^{2-\xi}(I)$ .
\end{rem}
\section{Asymptotically Stable Steady-State Solutions: Proof of Theorem~\ref{TStable}}\label{SectTStable}
%%%%%%%%%%%%%%%%%%%%%%%%%%%%%%%%%%%%%%%%%%%%%%%%%%%%%%%%
%%%%%%%%%%%%%%%%%%%%%%%%%%%%%%%%%%%%%%%%%%%%%%%%%%%%%%%%

We now prove Theorem~\ref{TStable}(i).  For this let $q\in (2,\infty)$, $\ve>0$, and $\kappa\in (0,1)$ be fixed and note that, due to \eqref{33} and \eqref{curv}, steady-state solutions to \eqref{u}-\eqref{bcpsi} are characterized by
\bqn\label{ss}
\partial_x^2 u= {\lambda}\ { \frac{(1+\e^2(\partial_x u)^2)^{5/2}}{(1+u)^2}} \, \vert\partial_\eta\phi_u(\cdot,1)\vert^2=:\lambda\,h_\ve(u)\ ,\quad x\in I\ ,\qquad u(\pm1)=0\ ,
\eqn
the function $\phi_u$ being defined in \eqref{phiv}. Thus we may proceed as in \cite[Thm.~1.3~(i)]{ELW_CPAM}: First, it follows exactly as in \cite[Prop.~2.1]{ELW_CPAM} that the mapping $h_\ve:S_q(\kappa)\rightarrow L_q(I)$ is analytic. Thus, since $-A(0)\in\mathcal{L}(W_{q,D}^2(I),L_q(I))$ is invertible, the operator $A(\cdot)$ being defined in \eqref{opA}, we obtain that the mapping
$$
F:\R\times S_q(\kappa)\rightarrow W_{q,D}^2(I)\ ,\quad (\lambda,v)\longmapsto v + \lambda A(0)^{-1}h_\ve(v)
$$
is analytic with $F(0,0)=0$ and $D_vF(0,0) =\mathrm{id}_{W_{q,D}^2}$. Now, the Implicit Function Theorem ensures the existence of $\delta=\delta(\kappa)>0$ and an analytic function 
$$
[\lambda\mapsto U_\lambda]:[0,\delta)\rightarrow W_{q,D}^2(I)
$$ 
such that $F(\lambda,U_\lambda)=0$ for $\lambda\in [0,\delta)$. Hence $(U_\lambda,\Psi_\lambda)$ is the unique steady-state to
\eqref{u}-\eqref{bcpsi} satisfying $U_\lambda\in S_q(\kappa)$ and $\Psi_\lambda\in W_2^2(\Omega(U_\lambda))$ when $\lambda\in (0,\delta)$. To improve the regularity of $(U_\lambda,\Psi_\lambda)$ as stated in \eqref{new1}, we may argue as follows: Since \eqref{ss} is basically the same equation as considered in \cite[Thm.~1]{LaurencotWalker_ARMA}, a steady-state solution $(u,\psi)$ to \eqref{u}-\eqref{bcpsi} possessing the regularity property \eqref{new1} can be constructed by means of Schauder's fixed point theorem applied to \eqref{ss} with $u\in W_\infty^2(I)\cap S_q(\kappa)$ for each $\lambda$ sufficiently small. Hence, making $\delta>0$ smaller, if necessary,  uniqueness guarantees $(u,\psi)=(U_\lambda,\Psi_\lambda)$. This proves Theorem~\ref{TStable}~(i).  \\

To prove part~(ii) of Theorem~\ref{TStable}, we proceed similarly as in \cite[Thm.~1.3~(ii)]{ELW_CPAM} and use the Principle of Linearized Stability.  Let $\lambda\in (0,\delta)$ be given and write $v=u-U_\lambda$. Then, introducing $Q\in C^\infty(S_q(\kappa),L_q(I))$ with $Q(U_\lambda)=0$ by   $Q(u):=-A(\ve u)u-\lambda g_\ve(u)$, the linearization of \eqref{CP} reads
$$
\frac{\rd}{\rd t}v-D_uQ(U_\lambda)v=Q(v+U_\lambda)-D_uQ(U_\lambda)v=:G_\lambda(v)
$$
with $G_\lambda\in C^\infty(O_\lambda,L_q(I))$ being defined on some open zero neighborhood $O_\lambda$ in $W_{q,D}^2(I)$ such that $U_\lambda+O_\lambda\subset S_q(\kappa)$. In view of \eqref{curv} we obtain
\bqn\label{lin}
\frac{\rd}{\rd t}v +\big(A(\ve U_\lambda)+B_\lambda\big)v=G_\lambda(v)\ ,
\eqn
where
$$
B_\lambda v:=\lambda g_\ve(U_\lambda)\,\frac{3\,\ve^2\,\partial_xU_\lambda}{1+\ve^2(\partial_x U_\lambda)^2}\partial_x v+\lambda\, D_ug_\ve(U_\lambda) v\ .
$$
Since $U_\lambda\in S_q(\kappa)$, we have {  $A(\ve U_\lambda)\in \mathcal{H}(W_{q,D}^2(I),L_q(I);k,\omega)$ with spectral bound less than $-\omega<0$}. Thus, since $\|B_\lambda\|_{\mathcal{L}(W_{q,D}^2(I),L_q(I))}\rightarrow 0$ as $\lambda\rightarrow 0$, it follows from \cite[I.~Cor.~1.4.3]{LQPP} that $-(A(\ve U_\lambda)+B_\lambda)$ is the generator of an analytic semigroup on $L_q(I)$ and there is $\omega_1>0$ such that the complex half plane $[\mathrm{Re}\, z\ge -\omega_1]$ belongs to the resolvent set of $-(A(\ve U_\lambda)+B_\lambda)$ provided that $\lambda$ is sufficiently small. 
Now we may apply \cite[Thm.~9.1.2]{Lunardi} and conclude statement~(ii) of Theorem~\ref{TStable} by making $\delta>0$ smaller, if necessary.\\

Let us note that  also the corresponding potential converges exponentially toward the steady-state as $t\rightarrow\infty$.

%%%%%%%%%%%%%%%%%%%%%%%%%%%%%%%%%%%%%%%%%%%%%%%%%%%%%%%%
\begin{rem}\label{RR}
In \cite{ELW_CPAM} (see (2.29) in the proof of Prop.2.1 therein) the following analogue of Lemma~\ref{L2a} 
$$
\|\phi_v-\phi_w\|_{W_2^{2}(\Omega)}\le c(\kappa,\ve)\, \|v-w\|_{W_{q}^{2}(I)}\ ,\quad v,w\in \overline{S}_q(\kappa)\ ,
$$
was shown to hold.
Consequently, under the assumptions of Theorem~\ref{TStable}~(ii) there is $R_1>0$ such that
$$
\|\phi_{u(t)}-\phi_{U_\lambda}\|_{W_2^2(\Omega)}\le R_1 e^{-\omega_0 t}\| u^0-U_\lambda\|_{W_{q,D}^2(I)}\ ,\quad t\ge 0\ ,
$$
which shows that also $\psi=\phi_{u(t)}\circ T_{u(t)}$ (with transformation $T_{u(t)}$ defined in \eqref{Tu}) converges exponentially to~$\Psi_\lambda$ as $t\rightarrow\infty$.
\end{rem}
%%%%%%%%%%%%%%%%%%%%%%%%%%%%%%%%%%%%%%%%%%%%%%%%%%%%%%%%

%%%%%%%%%%%%%%%%%%%%%%%%%%%%%%%%%%%%%%%%%%%%%%%%%%%%%%%%
%%%%%%%%%%%%%%%%%%%%%%%%%%%%%%%%%%%%%%%%%%%%%%%%%%%%%%%%
\section{Non-Existence of Steady-State Solutions: Proof of Theorem~\ref{NoSS}}\label{SectNoSS}
%%%%%%%%%%%%%%%%%%%%%%%%%%%%%%%%%%%%%%%%%%%%%%%%%%%%%%%%
%%%%%%%%%%%%%%%%%%%%%%%%%%%%%%%%%%%%%%%%%%%%%%%%%%%%%%%%

Consider a steady-state solution $(u,\psi)$  to \eqref{u}-\eqref{bcpsi} with $u\in W_{q,D}^2(I)$, $\psi\in W_2^2(\Omega(u))$, and $u(x)>-1$ for $x\in I$. Recall that, according to \eqref{u}, \eqref{curv}, and the identity $$\partial_x\psi(x,u(x)) = - \partial_x u(x)\, \partial_z \psi(x,u(x))\ ,$$ the function $u$ solves
\begin{equation*}
\partial_x^2 u(x) =  \lambda\ \left( 1+\varepsilon^2\, (\partial_x u(x))^2)\right)^{5/2}|\partial_z\psi(x,u(x))|^2 \ ,\quad x\in I\  ,
\end{equation*}
with $u(\pm 1)=0$, so that $u$ is negative in $I$ by the comparison principle and convex. Thanks to the latter property, \cite[Lem.~4.1]{LaurencotWalker_ARMA} ensures that
$$
\partial_z\psi(x,u(x))\ge 1\ ,\quad x\in I\ .
$$
Hence, $u$ satisfies the following differential inequality
\begin{equation*}
\partial_x^2 u(x) \ge  \lambda\ \left( 1+\varepsilon^2\, (\partial_x u(x))^2)\right)^{5/2}\ ,\quad x\in I\  .
\end{equation*}
Introducing 
$$
J(r) := \int_0^r \frac{\rd s}{\left( 1 + s^2 \right)^{5/2}} = \frac{r(2r^2+3)}{3 (r^2+1)^{3/2}}\ , \quad r\ge 0\ , 
$$
the function $J$ maps $[0,\infty)$ onto $[0,2/3)$ and the previous differential inequality for $u$ reads
\begin{equation}
\partial_x J(\ve \partial_x u) \ge \lambda\,\ve\ , \qquad x\in I\ . \label{nex1}
\end{equation}
Let $x_m$ be a point of minimum of $u$. Since $u<0$ in $I$ and $x\mapsto u(-x)$ is also a steady-state, we may assume without loss of generality that $x_m\in (-1,0)$. Integrating \eqref{nex1} over $(x_m,x)$ for $x\in [0,1]$ gives
\begin{equation}
J(\ve \partial_x u(x)) \ge J(0) + \lambda\,\ve\, (x-x_m) \ge \lambda\,\ve\, x\ , \qquad x\in [0,1]\ . \label{nex2}
\end{equation}
Now, either $\lambda\,\ve \ge 2/3$ and we deduce from \eqref{nex2} that $J(\ve \partial_x u(1))\ge \lambda\,\ve \ge 2/3$ which contradicts the boundedness of $\partial_x u$. Or $\lambda\,\ve < 2/3$ and, since $J$ is concave, we infer from \eqref{nex2} after integration over $(0,1)$ and Jensen's inequality that
$$
J(-\ve\, u(0)) = J\left( \int_0^1 \ve\,\partial_x u(x)\ \rd x \right) \ge \int_0^1 J\left( \ve\,\partial_x u(x) \right)\ \rd x \ge \frac{\lambda\,\ve}{2}\ .
$$
If $\lambda\ge 2 J(\ve)/\ve$, the previous inequality and the monotonicity of $J$ entail that $-u(0)\ge 1$ and a contradiction again.   
Now, defining $\bar{\lambda}(\ve) := \min{\{ 2 J(\ve) , 2/3 \}}/\ve$ and noticing that $\bar{\lambda}(\ve)\rightarrow 2$ as $\ve\rightarrow 0$, Theorem~\ref{NoSS} follows.

%%%%%%%%%%%%%%%%%%%%%%%%%%%%%%%%%%%%%%%%%%%%%%%%%%%%%%%%
%%%%%%%%%%%%%%%%%%%%%%%%%%%%%%%%%%%%%%%%%%%%%%%%%%%%%%%%
\section{Small Aspect Ratio Limit: Proof of Theorem~\ref{Bq}}\label{Sec6} 
%%%%%%%%%%%%%%%%%%%%%%%%%%%%%%%%%%%%%%%%%%%%%%%%%%%%%%%%
%%%%%%%%%%%%%%%%%%%%%%%%%%%%%%%%%%%%%%%%%%%%%%%%%%%%%%%%

To prove Theorem~\ref{Bq} fix {$\lambda>0$, $q\in (2,\infty)$, and $u^0\in W_{q,D}^2(I)$ such that $-1<u^0\le 0$ in $I$. Clearly, there is $\kappa\in (0,1)$ such that $u^0\in S_q(\kappa)$. For $\varepsilon\in (0,1)$ let $(u_\varepsilon,\psi_\varepsilon)$ be the unique solution
to \eqref{u}-\eqref{bcpsi} which is defined on the maximal interval of existence $[0,T_m^\varepsilon)$. 
In the following, $(K_i)_{i\ge 1}$ denote positive constants depending only on $q$ and $\kappa$, but not on $\ve>0$ sufficiently small.

Set $\kappa_1 := \kappa/(2c_*(\kappa))< \kappa$, where $c_*(\kappa)\ge 1$ is the constant defined in \eqref{UA}. The continuity properties of $u_\varepsilon$ ensure
\begin{equation}
\tau^\varepsilon := \sup{\left\{ t\in [0,T_m^\varepsilon)\ :\ u_\varepsilon(s)\in\overline{S}_q(\kappa_1) \;\;\text{ for all }\;\; s\in [0,t] \right\}} > 0\ . \label{pam2}
\end{equation}
Owing to the continuous embedding of $W_q^2(I)$ in $W_\infty^1(I)$, there is a positive constant $K_1$ such that, for all $\varepsilon>0$,
\begin{align}
-1 + \kappa_1 \le u_\varepsilon(t,x) & \le 0\,, \qquad \ (t,x)\in [0,\tau^\varepsilon]\times [-1,1]\,, \label{z1} \\
\|u_\varepsilon(t)\|_{W_q^2(I)} + \|u_\varepsilon(t)\|_{W_\infty^1(I)} & \le K_1\,, \qquad t\in [0,\tau^\varepsilon]\,. \label{z2}
\end{align}
As a consequence of \eqref{z2} there is $\varepsilon_0>0$ depending only $q$ and $\kappa$} such that 
\begin{equation}
\varepsilon_0^2\ \left\| \partial_x u_\varepsilon(t) \right\|_{L_\infty(I)}^2 \le \frac{1}{2}\,, \qquad (t,\varepsilon)\in [0,\tau^\varepsilon]\times (0,\varepsilon_0]\,. \label{z3}
\end{equation}
For $\e\in (0,\varepsilon_0)$, we set 
$$
\phi_\e(t) := \phi_{u_\varepsilon(t)}=\psi_\varepsilon(t) \circ T_{\ue(t)}^{-1}\ ,\qquad t\in [0,\tau^\varepsilon]\ ,
$$ 
with $T_{\ue(t)}^{-1}$ given by \eqref{Tuu} and 
$$
\Phi_\varepsilon(t,x,\eta) := \phi_\varepsilon(t,x,\eta)-\eta\ ,\qquad (t,x,\eta)\in [0,\tau^\varepsilon]\times\overline{\Omega}\,.
$$ 
We first recall uniform estimates on  $\Phi_\varepsilon$ that have been established in \cite[Lem.~5.1]{ELW_CPAM}:

%%%%%%%%%%%%%%%%%%%%%%%%%%%%%%%%%%%%%%%%%%%%%%%%%%%%%%%%%%%%%%%%%%%%
\begin{lem}\label{le:z1}
There exists a positive constant $K_2$ such that, for $\varepsilon\in (0,\varepsilon_0)$ and $t\in [0,\tau^\varepsilon]$, 
\begin{align}
\left\| \partial_x\Phie(t) \right\|_{L_2(\Omega)} + \frac{1}{\e}\ \left( \left\| \Phie(t) \right\|_{L_2(\Omega)} + \left\| \partial_\eta\Phie(t) \right\|_{L_2(\Omega)} \right) & \le K_2\,, \label{z41} \\
\frac{1}{\varepsilon} \left\| \partial_x\partial_\eta\Phie(t) \right\|_{L_2(\Omega)} + \frac{1}{\varepsilon^2}\ \left\| \partial_\eta^2\Phie(t)\right\|_{L_2(\Omega)} & \le K_2\,, \label{z42} \\
\frac{1}{\varepsilon}\ \left\| \partial_\eta{ \Phie(t,\cdot,1)} \right\|_{W_2^{1/2}(I)} & \le K_2\,. \label{z5}
\end{align}
\end{lem}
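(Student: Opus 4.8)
The estimates \eqref{z41}--\eqref{z5} are those of \cite[Lem.~5.1]{ELW_CPAM}; here is an outline of how I would establish them. Fix $t\in[0,\tau^\varepsilon]$ and abbreviate $v:=\ue(t)$; then $\Phie(t)=\phi_v-\eta$ is the solution in $W_{2,D}^2(\Omega)$ of $-\mathcal{L}_v\Phie(t)=f_v$ with $f_v$ as in \eqref{f}, and by \eqref{z1}--\eqref{z3} one has, uniformly in $\varepsilon\in(0,\varepsilon_0)$, the bounds $1+v\ge\kappa_1$ on $I$, $\|v\|_{W_q^2(I)}+\|v\|_{W_\infty^1(I)}\le K_1$, and $\varepsilon^2\|\partial_xv\|_{L_\infty(I)}^2\le1/2$. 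The strategy is to rerun the energy arguments underlying Lemma~\ref{L1t} while keeping explicit track of the powers of $\varepsilon$; this is possible precisely because $f_v$ carries an explicit factor $\varepsilon^2$ and because the only coefficient of $\mathcal{L}_v$ of size $O(1)$ is the coefficient $(1+\varepsilon^2\eta^2(\partial_xv)^2)/(1+v)^2$ of $\partial_\eta^2$, all other coefficients carrying a factor $\varepsilon^2$.

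For \eqref{z41} I would test the equation with $\Phie(t)$, which reproduces the identity \eqref{2.5} with $F=f_v$. Rewriting $f_v=\varepsilon^2\eta\,(\partial_xv/(1+v))^2-\partial_x\big(\varepsilon^2\eta\,\partial_xv/(1+v)\big)$ shows $\|f_v\|_{W_{2,D}^{-1}(\Omega)}\le K\varepsilon^2$, and combining \eqref{2.5} with the coercivity inequality \eqref{2.7}, with \eqref{z2}, and with the Poincaré inequality $\|\Phie(t)\|_{L_2(\Omega)}\le K\|\partial_\eta\Phie(t)\|_{L_2(\Omega)}$ (legitimate because $\Phie(t)$ vanishes on $\{\eta=0\}\cup\{\eta=1\}$), a Young and absorption argument---using that $\varepsilon_0$ may be taken small---should yield $\|\partial_x\Phie(t)\|_{L_2(\Omega)}\le K$ and $\|\partial_\eta\Phie(t)\|_{L_2(\Omega)}\le K\varepsilon$, hence also $\|\Phie(t)\|_{L_2(\Omega)}\le K\varepsilon$, which is \eqref{z41}.

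For \eqref{z42}, differentiating the equation in $x$ is not available because the coefficients of $\mathcal{L}_v$ depend on $v$ only through $W_q^2(I)$-data; instead I would use difference quotients in the $\eta$-variable, whose coefficients are smooth, to obtain $\partial_\eta\Phie(t)\in W_2^1(\Omega)$, so that $\partial_x\partial_\eta\Phie(t)$ and $\partial_\eta^2\Phie(t)$ lie in $L_2(\Omega)$. A weighted second-order energy estimate for the $\eta$-differentiated equation---in which the inhomogeneity $\partial_\eta f_v$ and every commutator term produced by $\eta$-differentiating the coefficients of $\mathcal{L}_v$ is $O(\varepsilon^2)$, the leading part being again the coercive $\partial_\eta$-block---should give $\|\partial_x\partial_\eta\Phie(t)\|_{L_2(\Omega)}\le K\varepsilon$. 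Solving $-\mathcal{L}_v\Phie(t)=f_v$ algebraically for $\partial_\eta^2\Phie(t)$ (its coefficient being bounded below by $\kappa_1^2/(1+K_1)$) and inserting $\|f_v\|_{L_2(\Omega)}\le K\varepsilon^2$ together with the bounds already obtained then yields $\|\partial_\eta^2\Phie(t)\|_{L_2(\Omega)}\le K\varepsilon^2$, which is \eqref{z42}. Finally \eqref{z5} is immediate: \eqref{z41}--\eqref{z42} give $\|\partial_\eta\Phie(t)\|_{W_2^1(\Omega)}\le K\varepsilon$, and continuity of the trace operator $W_2^1(\Omega)\to W_2^{1/2}(I)$ onto the edge $\{\eta=1\}$ (see \cite[Thm.~1.5.1.1]{Grisvard}) bounds $\|\partial_\eta\Phie(t,\cdot,1)\|_{W_2^{1/2}(I)}$ by $K\varepsilon$.

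I expect the real difficulty to lie in the $\varepsilon$-bookkeeping behind \eqref{z42}, that is, in establishing the $W_2^2$-bounds with the sharp powers $\varepsilon$ and $\varepsilon^2$ rather than with the $\varepsilon$-dependent constant $c_2(\kappa,\varepsilon)$ of Lemma~\ref{L1t}. One must cope at the same time with the merely $W_q^2(I)$-regular $x$-dependence of the coefficients of $\mathcal{L}_v$ (so that the usual $H^2$-regularity machinery, based on differentiating in every direction, is not applicable) and with the corners of the rectangle $\Omega$, at which $\partial_\eta\Phie(t)$ satisfies only a mixed boundary condition---it vanishes on $\{x=\pm1\}$ but is unconstrained on $\{\eta=0,1\}$---so that the second-order estimates have to be organized around $\eta$-difference quotients and a careful exploitation of the algebraic structure of the identity $\mathcal{L}_v\Phie(t)=-f_v$.
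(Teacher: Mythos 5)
The paper itself supplies no proof of Lemma~\ref{le:z1}: the text merely recalls the estimates from \cite[Lem.~5.1]{ELW_CPAM}, so your attempt provides an argument where the paper provides a citation. Your sketch for \eqref{z41} is sound: starting from \eqref{2.5} with $F=f_v$, using the bound $\|f_v\|_{W_{2,D}^{-1}(\Omega)}\le K\varepsilon^2$ from your divergence rewriting of $f_v$, the $\eta$-Poincar\'e inequality, and the fact that $\|\partial_\eta\Phie\|_{L_2}\le\big\|\tfrac{\partial_\eta\Phie}{1+v}\big\|_{L_2}$ and $\|\partial_x\Phie\|_{L_2}\le\big\|\partial_x\Phie-\eta\tfrac{\partial_x v}{1+v}\partial_\eta\Phie\big\|_{L_2}+K\|\partial_\eta\Phie\|_{L_2}$ (so that one never needs to pass through the $O(\varepsilon^2)$ coercivity constant $\nu(\kappa,\varepsilon)$), a Young/absorption argument closes the estimate for $\varepsilon\in(0,\varepsilon_0]$ with $\varepsilon_0$ small, and \eqref{z5} follows from \eqref{z41}--\eqref{z42} by the trace theorem exactly as you state.

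Your sketch for \eqref{z42}, however, has a genuine gap. The final step, solving $-\mathcal{L}_v\Phie=f_v$ algebraically for $\partial_\eta^2\Phie$, leaves on the right-hand side the term $\varepsilon^2\partial_x^2\Phie$, and nothing established up to that point controls $\|\partial_x^2\Phie\|_{L_2(\Omega)}$: Lemma~\ref{L1t} only gives $\|\Phie\|_{W_{2,D}^2(\Omega)}\le c_2(\kappa,\varepsilon)\|f_v\|_{L_2(\Omega)}$ with a constant $c_2(\kappa,\varepsilon)$ whose behaviour as $\varepsilon\to0$ is not tracked (and cannot be expected to stay bounded, since the $\partial_x^2$-coefficient of $\mathcal{L}_v$ degenerates like $\varepsilon^2$), so the algebraic step is circular. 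The alternative you propose, an energy estimate for the $\eta$-differentiated equation, is blocked for the very reason you flag: $\partial_\eta\Phie$ has no boundary condition on $\{\eta=0\}\cup\{\eta=1\}$, so the integrations by parts used in the proof of Lemma~\ref{L1t} produce uncontrolled boundary terms on those edges.

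The way to obtain \eqref{z42} without touching $\partial_x^2\Phie$ is to test the original equation $\mathcal{L}_v\Phie=-f_v$ directly against $\partial_\eta^2\Phie$ and exploit the sign-definite identity
\begin{equation*}
\int_\Omega (\partial_x^2\Phie)(\partial_\eta^2\Phie)\,\rd(x,\eta)=\|\partial_x\partial_\eta\Phie\|_{L_2(\Omega)}^2\,,
\end{equation*}
which holds with no boundary contribution because $\Phie$ vanishes on the \emph{entire} boundary of the rectangle: the boundary integral over $\{\eta=0,1\}$ carries the factor $\partial_x\Phie$, which is zero along those horizontal edges, and the one over $\{x=\pm1\}$ carries $\partial_\eta\Phie$ (or $\partial_\eta^2\Phie$), which vanishes along the vertical edges. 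With this identity the quantity $\partial_x^2\Phie$ disappears from the ledger and one is left with
\begin{equation*}
\varepsilon^2\,\|\partial_x\partial_\eta\Phie\|_{L_2(\Omega)}^2+\int_\Omega\frac{1+\varepsilon^2\eta^2(\partial_xv)^2}{(1+v)^2}\,(\partial_\eta^2\Phie)^2\,\rd(x,\eta)
\end{equation*}
on the left, while the remaining terms on the right are all, after Young's inequality, of size $O(\varepsilon^4)$ up to quantities absorbable in the left-hand side for $\varepsilon$ small (using \eqref{z41}, $\|f_v\|_{L_2(\Omega)}\le K\varepsilon^2$, and \eqref{z1}--\eqref{z3}). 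This yields $\|\partial_x\partial_\eta\Phie\|_{L_2(\Omega)}\lesssim\varepsilon$ and $\|\partial_\eta^2\Phie\|_{L_2(\Omega)}\lesssim\varepsilon^2$ in one stroke, and is the step your sketch is missing.
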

%%%%%%%%%%%%%%%%%%%%%%%%%%%%%%%%%%%%%%%%%%%%%%%%%%%%%%%%%%%%%%%%%%%%
At this point let us mention that the assumption  $u^0\le 0$ is used to obtain the previous lemma.
We then deduce from Lemma~\ref{le:z1} a positive lower bound on $\tau^\ve$. 

%%%%%%%%%%%%%%%%%%%%%%%%%%%%%%%%%%%%%%%%%%%%%%%%%%%%%%%%
\begin{lem}\label{le.pam1}
\begin{itemize}
\item[(i)] There is $\tau>0$ depending only on $q$, $\lambda$, and $\kappa$ such that $\tau^\varepsilon\ge \tau$ for all $\varepsilon\in (0,\varepsilon_0)$.
\item[(ii)] There is $\Lambda:=\Lambda(\kappa)>0$ such that $\tau^\varepsilon=T_m^\ve=\infty$ for all $\ve\in (0,\ve_0)$ provided $\lambda\in (0,\Lambda)$.
\end{itemize}
\end{lem}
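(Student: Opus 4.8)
The plan is to establish Lemma~\ref{le.pam1} by deriving an $\ve$-independent estimate on the H\"older norm of $u_\ve$ in time, which in turn feeds back into the definition \eqref{pam2} of $\tau^\ve$ to show that $\tau^\ve$ cannot be small. First I would rewrite the evolution equation \eqref{33} for $u_\ve$ on $[0,\tau^\ve]$ in the mild form
\bqnn
u_\ve(t) = U_{A(\ve u_\ve)}(t,0)\, u^0 - \lambda \int_0^t U_{A(\ve u_\ve)}(t,s)\, g_\ve(u_\ve(s))\, \rd s\ ,
\eqnn
and observe that, since $u_\ve(s)\in\overline{S}_q(\kappa_1)\subset\overline{S}_q(\kappa)$ for $s\in[0,\tau^\ve]$ by \eqref{pam2}, the curvature operator $A(\ve u_\ve(\cdot))$ satisfies the hypotheses of Proposition~\ref{ES} with constants depending only on $q$ and $\kappa$ (using $\ve<1$ and the embedding $W_q^2(I)\hookrightarrow W_q^{2-\xi}(I)$ to get the requisite H\"older continuity in time from \eqref{z2} and the equation itself). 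The key new input is the bound \eqref{z5} of Lemma~\ref{le:z1}, which gives $\|\partial_\eta\Phi_\ve(t,\cdot,1)\|_{W_2^{1/2}(I)}\le K_2\ve$, hence $\|\partial_\eta\phi_\ve(t,\cdot,1)\|_{W_2^{1/2}(I)}\le K_2\ve + \|\partial_\eta\eta\|_{W_2^{1/2}(I)} = K_2\ve + C$; combined with \eqref{z2} and \eqref{z3} this yields an $\ve$-independent bound
\bqnn
\|g_\ve(u_\ve(t))\|_{W_{2,D}^{2\sigma}(I)} \le K_3\ ,\qquad t\in[0,\tau^\ve]\ ,
\eqnn
for $2\sigma\in(1/2-1/q, 1/2-\xi)$ as in the proof of Theorem~\ref{A}, possibly after invoking Proposition~\ref{L1} directly with $\kappa_1$ in place of $\kappa$.

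Next I would mimic the estimates \eqref{zui}--\eqref{122a} from the proof of Theorem~\ref{A}, but now with all constants depending only on $q$ and $\kappa$ (not on $\ve$). Using \eqref{UA} and the $\ve$-independent bound on $g_\ve(u_\ve)$, I would show that there is $K_4=K_4(q,\kappa)>0$ such that for $t\in[0,\tau^\ve]$,
\bqnn
u_\ve(t) \ge -1 + \kappa - \lambda\, K_4\, t^{\delta_0}\ ,\qquad \|u_\ve(t)\|_{W_{q,D}^2(I)} \le \frac{c_*(\kappa)}{\kappa} + \lambda\, K_4\, t^{\delta_0} = \frac{\kappa}{\kappa_1} \cdot \frac{1}{\kappa}\cdot\frac{\kappa_1}{1} \cdots
\eqnn
— more carefully, using $\kappa_1 = \kappa/(2c_*(\kappa))$ one gets $\|u_\ve(t)\|_{W_{q,D}^2(I)}\le c_*(\kappa)\|u^0\|_{W_{q,D}^2(I)} + \lambda K_4 t^{\delta_0} \le \tfrac{1}{2\kappa_1} + \lambda K_4 t^{\delta_0}$ since $\|u^0\|_{W_{q,D}^2(I)}\le 1/\kappa$ and $c_*(\kappa)/\kappa = 1/(2\kappa_1)$, where $\delta_0 := \sigma - \tfrac12(\tfrac12-\tfrac1q) > 0$. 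Similarly I would bound $\|u_\ve(t)-u^0\|_{W_{q,D}^{2-\xi}(I)}\le K_5 t^{\delta_1}$ for some $\delta_1>0$, using \cite[II.~Thm.~5.3.1]{LQPP} exactly as in \eqref{122}. Choosing $\tau>0$ so small (depending only on $q$, $\lambda$, $\kappa$, through $K_4, K_5$) that $\lambda K_4 \tau^{\delta_0} \le \min\{\kappa_1/2,\ \tfrac{1}{2\kappa_1}\}$ and $K_5\tau^{\delta_1} \le$ (slack to stay in $\overline{S}_q(\kappa_1)$, accounting for $\|u^0\|_{W_q^{2-\xi}}<1/\kappa_1$), we conclude $u_\ve(t)\in\overline{S}_q(\kappa_1)$ for all $t\in[0,\tau]\cap[0,\tau^\ve]$. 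By the definition of $\tau^\ve$ in \eqref{pam2} as a supremum together with a standard continuation/connectedness argument, this forces $\tau^\ve\ge\tau$, proving part~(i).

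For part~(ii), the observation is that when $\lambda$ is small the term $\lambda K_4 t^{\delta_0}$ (respectively $\lambda K_5 t^{\delta_1}$) can be kept below the required thresholds \emph{for all} $t$, not merely for small $t$: more precisely, using $\vartheta>0$ in \eqref{vartheta} (which we may arrange, exactly as in the proof of Theorem~\ref{A}(iv), by fixing $\rho$ and $N$ with $c_0(\rho)N^{1/\rho}<\omega(\kappa)$), the integrals $\int_0^\infty e^{-\vartheta s} s^{\sigma-1-\frac12(\frac12-\frac1q)}\,\rd s$ converge, so the bounds become $u_\ve(t)\ge -1+\kappa - \lambda K_4'$, $\|u_\ve(t)\|_{W_{q,D}^2}\le \tfrac{1}{2\kappa_1} + \lambda K_4'$, $\|u_\ve(t)-u^0\|_{W_{q,D}^{2-\xi}}\le \lambda K_5'$ uniformly in $t\in[0,\tau^\ve)$. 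Hence there is $\Lambda=\Lambda(\kappa)>0$ such that for $\lambda\in(0,\Lambda)$ one has $u_\ve(t)\in S_q(\kappa_1)$ for all $t\in[0,\tau^\ve)$, which by \eqref{pam2} and the maximality criterion Theorem~\ref{A}(ii) (applied with $\kappa(\tau)=\kappa_1$ independent of $\tau$) yields $\tau^\ve = T_m^\ve = \infty$. The main obstacle is the first step: extracting an $\ve$-independent bound on $g_\ve(u_\ve)$ in a norm strong enough ($W_{2,D}^{2\sigma}$ with $2\sigma>1/2-1/q$) to run the parabolic smoothing estimate \eqref{UA}, while only having the $\ve$-scaled estimate \eqref{z5} on $\partial_\eta\Phi_\ve$ — one must be careful that the $\ve^2(\partial_x u_\ve)^2$ factor in $g_\ve$ is controlled by \eqref{z3} rather than blowing up, and that passing from $\phi_\ve$ back to the full nonlinearity does not reintroduce $\ve$-dependence; everything else is a transcription of the fixed-point estimates already carried out in Section~\ref{Sec3}, now tracking the $\ve$-uniformity.
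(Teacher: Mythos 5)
Your proposal follows the paper's proof essentially step by step: the crucial $\ve$-uniform bound $\|g_\ve(u_\ve(t))\|_{W_{2,D}^{2\sigma}(I)}\le K_4$ is obtained, exactly as in the paper, by combining the uniform estimate \eqref{z5} on $\partial_\eta\Phi_\ve(t,\cdot,1)$ with \eqref{z1}--\eqref{z2} and the pointwise-multiplication embedding $W_q^1(I)\cdot W_2^{1/2}(I)\cdot W_2^{1/2}(I)\hookrightarrow W_2^{2\sigma}(I)$, after which the Variation-of-Constants estimates \eqref{pam4}--\eqref{pam5} and the choice of $\tau$ (resp.\ of $\Lambda$ via $\vartheta>0$ and convergence of $\int_0^\infty e^{-\vartheta s}s^{\sigma-1-\frac12(\frac12-\frac1q)}\,\rd s$) give parts (i) and (ii). One caveat: the hedged alternative of ``invoking Proposition~\ref{L1} directly with $\kappa_1$ in place of $\kappa$'' would \emph{not} yield the required $\ve$-uniformity, since the constant $c_1(\kappa,\ve)$ in \eqref{10a} is $\ve$-dependent; the route through Lemma~\ref{le:z1} is the one that is needed, as you primarily propose. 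Also, the extra H\"older-in-time estimate on $\|u_\ve(t)-u^0\|_{W_{q,D}^{2-\xi}(I)}$ is superfluous for this lemma because $\overline{S}_q(\kappa_1)$ (which is what \eqref{pam2} tracks) involves only the $W_q^2$-norm and the pointwise lower bound.
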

%%%%%%%%%%%%%%%%%%%%%%%%%%%%%%%%%%%%%%%%%%%%%%%%%%%%%%%%

\begin{proof}[{\bf Proof}]
Owing to \eqref{z5} we have $\|\partial_\eta\phi_\ve(t,\cdot,1)\|_{W_{2}^{1/2}(I)}\le 1+ K_2 \ve$ while \eqref{z1} and \eqref{z2}  imply that
$$
\left\|\frac{1+\e^2(\partial_x u_\ve(t))^2}{(1+u_\ve(t))^2}\right\|_{W_q^1(I)}\le K_3
$$
for $t\in [0,\tau^\ve]$. Consequently, from the continuity of pointwise multiplication 
$$
W_q^1(I)\cdot W_2^{1/2}(I)\cdot W_2^{1/2}(I) \hookrightarrow W_2^{2\sigma}(I)
$$
for $2\sigma\in (0,1/2)$ fixed (see Theorem~\ref{PM} below), we conclude that there is $K_4>0$ such that
\begin{equation}
\left\| g_\varepsilon(u_\e(t)) \right\|_{W_2^{2\sigma}(I)} \le K_4\ , \qquad t\in [0,\tau^\varepsilon]\ . \label{pam3}
\end{equation}
As in the proof of \eqref{zui} and \eqref{11}, we infer from \eqref{UA}, \eqref{pam3}, the fact that $u^0\in S_q(\kappa)$, and the Variation-of-Constant formula that, for $t\in [0,\tau^\ve]$, 
\begin{align}
\|u_\varepsilon(t)\|_{W_{q,D}^{2}(I)} & \le  c_*(\kappa)\ \|u^0\|_{W_{q,D}^2(I)} + \lambda\ c_*(\kappa)\ \int_0^t e^{-\vartheta(t-s)}\ (t-s)^{\sigma-1-\frac{1}{2}(\frac{1}{2}-\frac{1}{q})}\ \|g_\varepsilon(u_\varepsilon(s))\|_{W_{2,D}^{2\sigma}(I)}\,\rd s \nonumber \\
&\le  \frac{c_*(\kappa)}{\kappa}+\lambda\ c_*(\kappa)\ K_4\ \int_0^t e^{-\vartheta s}\ s^{\sigma-1-\frac{1}{2}(\frac{1}{2}-\frac{1}{q})}\, \rd s\ , \label{pam4}
\end{align}
and, using in addition the embedding of $W_{q,D}^2(I)$ in $L_\infty(I)$ with constant 2,
\begin{align}
u_\varepsilon(t) \ge & -1 + \kappa - 2\,\lambda\,  \int_0^t \left\| U_{A(\ve u_\ve)}(t,s)\, g_\varepsilon(u_\varepsilon(s)) \right\|_{W_{q,D}^2(I)}\, \rd s \nonumber \\
\ge & -1+\kappa -  2\,\lambda\,  c_*(\kappa) \,  \int_0^t e^{-\vartheta (t-s)}\ (t-s)^{\sigma-1-\frac{1}{2}(\frac{1}{2}-\frac{1}{q})}\ \|g_\varepsilon(u_\varepsilon(s))\|_{W_{2,D}^{2\sigma}(I)}\,\rd s \nonumber\\
\ge & -1+\kappa -  2\,\lambda\, c_*(\kappa)\, K_4\ \int_0^t e^{-\vartheta s}\ s^{\sigma-1-\frac{1}{2}(\frac{1}{2}-\frac{1}{q})}\, \rd s\ . \label{pam5}
\end{align}
Since the integral above converges to zero as $t\to 0$, there exists $\tau>0$ which depends only on $q$ and $\kappa$ such that
\bqn\label{zui2}
\int_0^t e^{-\vartheta s}\ s^{\sigma-1-\frac{1}{2}(\frac{1}{2}-\frac{1}{q})}\, \rd s < \min\left\{\frac{1}{\lambda \kappa K_4} \,,\,\frac{(2c_*(\kappa)-1)\kappa}{4\lambda c_*(\kappa)^2 K_4}\right\} \;\;\text{ for all }\;\; t\in [0,\tau]\ .
\eqn
Thanks to this choice, we readily deduce from \eqref{pam4} and \eqref{pam5} that $\|u_\varepsilon(t)\|_{W_{q,D}^{2}(I)}  \le 1/ \kappa_1$ and $u_\varepsilon(t)\ge -1+  \kappa_1$ for all $t\in [0,\tau]\cap [0,\tau^\varepsilon]$. Therefore, $u_\varepsilon(t)\in\overline{S}_q( \kappa_1)$ for all $t\in [0,\tau]\cap [0,\tau^\varepsilon]$ and the definition of $\tau^\varepsilon$ implies that $\tau^\varepsilon\ge \tau$. Finally, it is obvious from \eqref{zui2} that there is $\Lambda(\kappa)>0$ such that \eqref{zui2} holds for any $\tau>0$ and $\lambda\in (0,\Lambda(\kappa))$. This implies that $\tau^\ve\ge \tau$ for any $\tau>0$ so that $\tau^\ve =\infty$.
\end{proof}

Based on these auxiliary Lemmas~\ref{le:z1} and \ref{le.pam1} we may proceed exactly as in the proof of \cite[Thm.1.4]{ELW_CPAM} to conclude the statement of Theorem~\ref{Bq}. In particular, an obvious consequence of Lemma~\ref{le:z1} and Lemma~\ref{le.pam1} is that $\phi_\ve\rightarrow \eta$ in suitable topologies as $\ve\rightarrow 0$ and that $(u_\ve)_{\ve\in (0,\ve_0)}$ is bounded in $C([0,\tau],W_{q,D}^2(I))$.

\section{Appendix}

In this appendix we recall a useful tool on pointwise multiplication of functions in Sobolev spaces which is used frequently in the previous sections.

\begin{thm}\label{PM}
Let $\Om$ be an open and {non-empty} subset of $\R^{n}$ with finite volume. Let $m\ge 2$ be an integer and let
$p,p_{j}\in [1,\infty)$ and $s,s_{j}¥\in (0,\infty )$ for $1\le j\le m$ be real numbers satisfying $s \leq \min \{s_{j}\}$ and
$$
s-\frac{n}{p} < \left\{ 
\begin{array}{ll}
\displaystyle{\sum\limits_{s_{j}<\frac{n}{p_{j}}}(s_{j}-\frac{n}{p_{j}})} &
\text{if} \quad \displaystyle{\min_{1\le j\le m}\left\{ s_{j}-\frac{n}{p_{j}} \right\} < 0} \ , \\
\displaystyle{\min_{1\le j \le m}\left\{ s_{j}-\frac{n}{p_{j}} \right\}} & \text{otherwise}\ .
\end{array}\right.
$$
Then pointwise multiplication
\begin{equation*}
\prod^{m}_{j=1}W_{p_{j}}^{s_{j}}(\Om) \rightarrow
W_{p}^{s}(\Om )
\end{equation*}
is continuous.
\end{thm}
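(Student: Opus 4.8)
The plan is to reduce the multilinear estimate on $\Om$ to the corresponding estimate on the whole space $\Rn$ and then to establish the latter by Littlewood--Paley / paraproduct techniques. Since pointwise multiplication is a local operation, one first composes with bounded extension operators $\mathcal{E}_j : W_{p_j}^{s_j}(\Om) \to W_{p_j}^{s_j}(\Rn)$ — available for the cubes and Lipschitz subdomains relevant here, e.g. by Stein's extension theorem after covering $\Om$ by such sets — together with the restriction $W_p^s(\Rn) \to W_p^s(\Om)$, so that it suffices to prove continuity of $\prod_{j=1}^m W_{p_j}^{s_j}(\Rn) \to W_p^s(\Rn)$. The finite-volume hypothesis on $\Om$ enters only through the elementary embeddings $W_{p_j}^{s_j}(\Om) \hookrightarrow W_{\tilde p_j}^{\tilde s_j}(\Om)$ with smaller smoothness and integrability exponents, which require $|\Om|<\infty$ and which are used to pre-process factors lying below the Sobolev line, i.e. those with $s_j < n/p_j$ (so that $W_{p_j}^{s_j}\not\hookrightarrow L_\infty$).

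For the bilinear case $m=2$ on $\Rn$ I would decompose the product into Bony paraproducts, $fg = \Pi_f g + \Pi_g f + R(f,g)$, where $\Pi_f g = \sum_k (S_{k-2}f)(\Delta_k g)$ carries the high frequency of $g$, $\Pi_g f$ is symmetric, and $R(f,g)$ is the resonant part. Using the dyadic-block characterization of $W_p^s = F_{p,p}^s$ (respectively $H^s$ when $p=2$, $B_{p,p}^s$ for non-integer $s$) and Hölder's inequality applied to the frequency-localized pieces, each of the three terms is bounded by a product of two norms, one of which is a low-regularity or $L^\infty$-type quantity recovered from a Sobolev embedding. The hypothesis
$$ s-\frac np < \sum_{s_j < n/p_j}\left(s_j - \frac{n}{p_j}\right) \quad\text{resp.}\quad s-\frac np < \min_{1\le j\le m}\left(s_j - \frac{n}{p_j}\right) $$
is precisely the condition making this accounting close: it guarantees convergence of the frequency sums defining the paraproducts and the resonant term in $W_p^s$, and when no factor is below the Sobolev line it is exactly the embedding condition forcing an $L_\infty$-type control on the product.

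The case $m\ge 3$ then follows by induction on $m$: order the factors so that $s_1 - n/p_1 \ge \cdots \ge s_m - n/p_m$, apply the bilinear result to $W_{p_{m-1}}^{s_{m-1}}(\Rn)\cdot W_{p_m}^{s_m}(\Rn) \hookrightarrow W_r^t(\Rn)$ for a suitably chosen intermediate pair $(r,t)$, and verify that replacing the last two factors by this one leaves the hypotheses intact for the $(m-1)$-fold product; the constraint $s\le\min_j s_j$ and the dimensional inequality are stable under this operation by the choice of $(r,t)$. The main obstacle I anticipate is exactly this exponent bookkeeping in the borderline/low-regularity situations — when several factors satisfy $s_j < n/p_j$ one cannot place any single factor in $L_\infty$, and the loss of integrability must be distributed across the Hölder product via fractional Sobolev embeddings, with the induction step engineered to reproduce (rather than strictly weaken or strengthen) the stated dimensional condition.

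Finally, for the concrete instances actually needed in the paper — where all relevant smoothness exponents are $\le 1$ — there is a more elementary route avoiding Fourier analysis: one works directly with the Gagliardo seminorm, uses $|(fg)(x)-(fg)(y)| \le |f(x)|\,|g(x)-g(y)| + |g(y)|\,|f(x)-f(y)|$, and applies Hölder's inequality to the resulting double integral with integrability exponents again dictated by the embeddings above; the higher-order cases reduce to this via the Leibniz rule. Either way, the proof is a standard, if technically delicate, multiplier argument, and the novelty of the statement lies in its formulation with general indices rather than in the method.
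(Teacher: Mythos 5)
The paper does not actually prove this theorem: it invokes \cite[Thm.~4.1]{AmannMultiplication} as a black box, observing only that $W_p^s(\Omega)$ coincides with the Besov space $B_{p,p}^s(\Omega)$ for non-integer $s$ and that the embedding $W_p^{s_1}(\Omega)\hookrightarrow B_{p,p}^{s_2}(\Omega)$ for $s_1>s_2>0$ takes care of integer smoothness (the strict inequality in the dimensional hypothesis leaves room for this small loss). Your proposal is instead a sketch of the proof of the cited result itself — extension to $\R^n$, Bony paraproducts for $m=2$, induction on $m$ — which is indeed the standard machinery behind Amann's theorem, so the route is sound and you correctly flag the exponent bookkeeping in the several-factors-below-the-Sobolev-line case as the genuinely delicate part. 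The trade is clear: the paper pays with an external reference, you pay with a substantial technical development that the authors deliberately avoided. Two soft spots in the sketch: first, for a general open set of finite volume with no assumed boundary regularity, Stein extension is simply not available, and the reduction to $\R^n$ must instead rest on defining $W_p^s(\Omega)$ as the restriction space of $W_p^s(\Rn)$, which makes the extension tautological and the proposed covering by Lipschitz sets both unnecessary and insufficient; the role of $|\Omega|<\infty$ is exactly what you say, namely the embeddings $W_{p}^{s}(\Omega)\hookrightarrow W_{\tilde p}^{\tilde s}(\Omega)$ lowering integrability. Second, the concrete instances used in the paper involve factors of smoothness $2$ (e.g.\ $W_q^2(I)\cdot W_2^\alpha(I)\hookrightarrow W_{q'}^\xi(I)$), not just $\le 1$, so the Gagliardo-seminorm shortcut you offer at the end genuinely requires the Leibniz reduction you gesture at, and is not merely a cosmetic alternative.
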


Theorem~\ref{PM} is a consequence of the more general result stated in \cite[Thm.~4.1]{AmannMultiplication} (see also Remark~4.2~(d) therein). It follows by observing that the Sobolev spaces $W_p^s(\Omega)$ coincide with the Besov spaces $B_{p,p}^s(\Omega)$ provided $s\in (0,\infty)\setminus\N$ and $p\in [1,\infty)$ together with the fact that $W_p^{s_1}(\Omega)\hookrightarrow B_{p,p}^{s_2}(\Omega)$ if $s_1>s_2>0$.

%%%%%%%%%%%%%%%%%%%%%%%%%%%%%%%%%%%%%%%%%%%%%%%%%%%%%%%%%%%%%%%%%%%%
%%%%%%%%%%%%%%%%%%%%%%%%%%%%%%%%%%%%%%%%%%%%%%%%%%%%%%%%%%%%%%%%%%%%

\section*{Acknowledgments}

Part of this research was done while J.E. and Ch.W. were visiting the Institut de Math\'{e}ma\-ti\-ques de Toulouse, Universit\'{e} Paul Sabatier. The financial support and kind hospitality is gratefully acknowledged. The work of Ph.L. was partially supported by the Centre International de Math\'ematiques et d'Informatique CIMI. We thank Sylvie Monniaux for pointing out to us the paper \cite{JerisonKenig}.

%%%%%%%%%%%%%%%%%%%%%%%%%%%%%%%%%%%%%%%%%%%%%%%%%%%%%%%%%%%%%%%%%%%%
%%%%%%%%%%%%%%%%%%%%%%%%%%%%%%%%%%%%%%%%%%%%%%%%%%%%%%%%%%%%%%%%%%%%

%%%%%%%%%%%%%%%%%%%%%%%%%%%%%%%%%%%%%%%%%%%%%%%%%%%%%%%%%%%%%%%%%%%%%%%%%%%%%%%%%%%%%%%%%%%%%%%%%%%%%%%%%%%%%%%%%%%%%%%%%

%\bibliographystyle{abbrv}
%\bibliography{MEMS}

\end{document}